\documentclass[10pt, leqno]{amsart}

\usepackage{enumerate}
\usepackage[dvips]{graphics}
\usepackage{accents,amsmath,amssymb,hyperref,mathrsfs,mathtools}

\topmargin 0.4in \textwidth 5.3in \textheight 8.in \oddsidemargin
44pt \evensidemargin 12pt
\newcommand{\doublespace}
{\addtolength{\baselineskip}{0.15\baselineskip}}

\theoremstyle{plain}
\newtheorem{thm}{Theorem}[section]
\newtheorem{lem}[thm]{Lemma}

\newtheorem{prop}[thm]{Proposition}

\theoremstyle{definition}
\newtheorem{defn}[thm]{Definition}
\newtheorem{notn}[thm]{Notation}
\newtheorem{exmp}[thm]{Example}
\newtheorem{rmk}[thm]{Remark}

\newcounter{equationnumber}
\renewcommand{\theequation}{\thesection.\arabic{equation}}
\def\mathletters{
\addtocounter{equation}{1}
\edef\@currentlabel{\theequation}
\setcounter{equationnumber}{\value{equation}}
\setcounter{equation}{0}
\edef\theequation{\@currentlabel\noexpand\alph{equation}}
}

\title{An analogue of the L\'{e}vy-Hin\v{c}in formula for bi-free infinitely divisible distributions}
\author{Yinzheng Gu$^\ddagger$, Hao-Wei Huang$^\ddagger$, and James A. Mingo$^\ddagger$}
\address{Department of Mathematics and Statistics, Queen's University, Jeffery Hall, Kingston, Ontario, K7L 3N6, Canada}
\email{gu.y@queensu.ca, hwhuang@mast.queensu.ca, mingo@mast.queensu.ca}
\thanks{$^\ddagger$Research supported by a Discovery Grant from the Natural Sciences and Engineering Research Council of Canada}

\begin{document}

\maketitle \doublespace \pagestyle{myheadings} \thispagestyle{plain}
\markboth{ }{ }

\begin{abstract}

In this paper, we derive the bi-free analogue of the L\'{e}vy-Hin\v{c}in formula for compactly supported planar probability measures which are infinitely divisible with respect to the additive bi-free convolution introduced by Voiculescu. We also provide examples of bi-free infinitely divisible distributions with their bi-free L\'{e}vy-Hin\v{c}in representations. Furthermore, we construct the bi-free L\'{e}vy processes and the additive bi-free convolution semigroups generated by compactly supported planar probability measures.

\end{abstract}

\footnotetext[1]{{\it 2000 Mathematics Subject Classification.}\,Primary 46L54, Secondary 60E07.} \footnotetext[2]{{\it Key words and phrases.}\,additive bi-free convolution, bi-free limit theorem, bi-free infinitely divisible distributions.}

\section{Introduction}

Around thirty years ago, Voiculescu introduced free probability theory in order to attack some problems in the theory of operator algebras. He introduced free independence, an analogue of the classical notion of independence, with the intention of studying these problems in a probabilistic framework. The (additive) free convolution $\boxplus$, an analogue of the classical convolution $*$, is a binary operation on the set of compactly supported probability measures on $\mathbb{R}$ which corresponds to the sum of free random variables in a non-commutative probability space. This operation was later generalized to the set $\mathcal{M}_\mathbb{R}$ of Borel probability measures on $\mathbb{R}$ by Bercovici and Voiculescu \cite{BV93}. One of the essential functions in the theory is the free $\mathcal{R}$-transform of measures in $\mathcal{M}_\mathbb{R}$, which linearizes the additive free convolution \cite{V86, BV93}. The combinatorial apparatus of free cumulants and the lattice of non-crossing partitions, introduced by Speicher \cite{S97}, also play important roles in free probability theory for the study of sums and products of free $n$-tuples of random variables.

Either in classical or free probability theory, infinitely divisible probability measures play a central role. A probability measure $\mu \in \mathcal{M}_\mathbb{R}$ is said to be $*$-infinitely divisible (resp. $\boxplus$-infinitely divisible) if, for every $n\in\mathbb{N}$, it can be represented as an $n$-fold classical (resp. free) convolution of some probability measure in $\mathcal{M}_\mathbb{R}$. Measures which are $*$-infinitely divisible were first studied by de Finetti, Kolmogorov, L\'{e}vy, and Hin\v{c}in as they arise as the limit distributions of sums of independent random variables within a triangular array. The logarithm of the Fourier transform of a $*$-infinitely divisible distribution permits an integral representation called the L\'{e}vy-Hin\v{c}in representation. On the other hand, $\boxplus$-infinitely divisible distributions were first investigated by Voiculescu, and since then the theory has been well-developed. The theory of $\boxplus$-infinitely divisible distributions generalize the free central limit theorem as they appear in the limits of triangular arrays of freely independent random variables. Likewise, the free L\'{e}vy-Hin\v{c}in formula gives a complete description of $\boxplus$-infinitely divisible distributions. We refer the reader to \cite{BV92, NS96, BP99} for more details.

Furthermore, infinitely divisible distributions are closely related to stationary processes with independent increments. From the theoretical and applied points of view, L\'{e}vy processes form a very important research area in classical probability theory. Such processes in free probability theory also receive a lot of attention. By analogy with classical probability theory, the distributions $\mu_t$ of $X_t$ in an (additive) free L\'{e}vy process $(X_t)_{t \geq 0}$ satisfy the properties that $\mu_0 = \delta_0$ (the point mass at $0$), the weak convergence of $\mu_t$ to $\delta_0$ as $t \rightarrow 0^+$, and the semigroup property relative to the free convolution:
\begin{equation}\label{semigroup1}
\mu_s \boxplus \mu_t = \mu_{s + t},\;\;\;\;\;s,t \geq 0.
\end{equation}
As in the classical case, the distribution $\mu_1$ of $X_1$ is $\boxplus$-infinitely divisible.

For $n \in \mathbb{N}$ and $\mu\in\mathcal{M}_\mathbb{R}$, denote by $\mu_n$ the $n$-fold free convolution of $\mu$. One peculiarity of the free convolution is that the discrete free convolution semigroup $(\mu_n)_{n \in \mathbb{N}}$ can be embedded in a continuous family $(\mu_t)_{t \geq 1}$ which satisfies the semigroup property \eqref{semigroup1} for $s,t \geq 1$. This elegant result for a compactly supported measure $\mu$, proved by Nica and Speicher \cite{NS96}, has no parallel in classical probability theory. The exhibition of explicit random variables whose distributions are measures in $(\mu_t)_{t \geq 1}$ has several applications in random matrix theory.

Recently, Voiculescu \cite{V14, V13} introduced bi-free probability theory in order to study algebras of left operators and algebras of right operators simultaneously. This gives rise to the notions of bi-free cumulants, bi-free $\mathcal{R}$-transform, and the operation of (additive) bi-free convolution. In this paper, we prove a bi-free limit theorem for sums of bi-free pairs of random variables within a triangular array, and define the $\boxplus\boxplus$-infinite divisibility of planar probability measures in a natural manner. One of the main goals of this paper is to characterize the bi-free $\mathcal{R}$-transforms of compactly supported planar probability measures which are $\boxplus\boxplus$-infinitely divisible, and derive their bi-free L\'{e}vy-Hin\v{c}in representations. With the help of a bi-free limit theorem, we are able to provide some examples, such as the bi-free Gaussian and bi-free (compound) Poisson distributions, which are $\boxplus\boxplus$-infinitely divisible. A natural object in the study of the $\boxplus\boxplus$-infinite divisibility of distributions is the extension of the notions of free L\'{e}vy process and free convolution semigroups to the bi-free setting. Another goal of this paper is to prove the existence of the additive bi-free convolution semigroups generated by compactly supported planar probability measures. The main tools used throughout the paper are the combinatorics developed in free and bi-free probability theories.

This paper has four more sections after this introduction. Section $2$ provides background information on the development of bi-free probability theory based on \cite{V14, V13, MN15, CNS15}. Section $3$ contains the derivation of the bi-free L\'{e}vy-Hin\v{c}in formula. Sections $4$ and $5$ are dedicated to the study of bi-free L\'{e}vy processes and additive bi-free convolution semigroups.

\section{Preliminaries}

\subsection{Bi-free independence and bi-free cumulants}

We start by reviewing some definitions and results from \cite{V14, MN15, CNS15}. An ordered pair $(\mathcal{B}, \mathcal{C})$ is said to be a pair of (included) faces in a non-commutative probability space $(\mathcal{A}, \varphi)$ if $\mathcal{B}$ and $\mathcal{C}$ are unital subalgebras of $\mathcal{A}$, in which $\mathcal{B}$ and $\mathcal{C}$ are called the left and right face, respectively. In \cite[Definition 2.6]{V14}, Voiculescu defined bi-free independence for pairs of faces as follows.

\begin{defn}
A family $\pi = \{(\mathcal{B}_k, \mathcal{C}_k)\}_{k \in K}$ of pairs of faces in $(\mathcal{A}, \varphi)$ is said to be \textit{bi-free} if there exists a family of vector spaces with specified vector states $\{(\mathcal{X}_k, \mathring{\mathcal{X}}_k, \xi_k)\}_{k \in K}$ and unital homomorphisms $\ell_k: \mathcal{B}_k \rightarrow \mathcal{L}(\mathcal{X}_k),\;r_k: \mathcal{C}_k \rightarrow \mathcal{L}(\mathcal{X}_k)$ such that the joint distribution of $\pi$ with respect to $\varphi$ is equal to the joint distribution of $\tilde{\pi} = \{(\lambda_k \circ \ell_k(\mathcal{B}_k), \rho_k \circ r_k(\mathcal{C}_k))\}_{k \in K}$ with respect to the vacuum state on $\mathcal{L}(\mathcal{X})$, where $(\mathcal{X}, \mathring{\mathcal{X}}, \xi) = *_{k \in K}(\mathcal{X}_k, \mathring{\mathcal{X}}_k, \xi_k)$, $\lambda_k$ and $\rho_k$ are the left and right representations of $\mathcal{L}(\mathcal{X}_k)$ on $\mathcal{L}(\mathcal{X})$.
\end{defn}

Let $I$ and $J$ be index sets. If $(b_i')_{i\in I},\;(b_i'')_{i\in I},\;(c_j')_{j\in J},\;(c_j'')_{j\in J}$ are elements of $\mathcal{A}$, then the two-faced families of non-commutative random variables $(b', c') = ((b_i')_{i \in I}, (c_j')_{j \in J})$ and $(b'', c'') = ((b_i'')_{i \in I}, (c_j'')_{j \in J})$ are said to be bi-free if the associated pairs of faces $(\mathbb{C}\langle b_i' : i \in I\rangle, \mathbb{C}\langle c_j' : j \in J\rangle)$ and $(\mathbb{C}\langle b_i'' : i \in I\rangle, \mathbb{C}\langle c_j'' : j \in J\rangle)$ are bi-free (see \cite[Section 2]{V14}). Moreover, if $(b', c')$ and $(b'', c'')$ are bi-free with joint distributions $\mu'$ and $\mu''$, respectively, then the joint distribution of $((b_i' + b_i'')_{i \in I}, (c_j' + c_j'')_{j \in J})$ is called the additive bi-free convolution of $\mu'$ and $\mu''$, and is denoted by $\mu' \boxplus\boxplus \mu''$ (see \cite[Section 4]{V14}).

It was shown in \cite[Section 5]{V14} that there exist universal polynomials, called bi-free cumulants, on the mixed moments of bi-free pairs of two-faced families of non-commutative random variables which linearize the additive bi-free convolution. However, there were no explicit formulas for the bi-free cumulants. Later, Mastnak and Nica defined $(\ell, r)$-cumulants and combinatorial-bi-free independence in \cite{MN15} as follows.

\begin{defn}
Let $(\mathcal{A}, \varphi)$ be a non-commutative probability space and let $[n]$ denote $\{1, \dots, n\}$ for $n \geq 1$. There exists a family of multilinear functionals
$$(\kappa_\chi: \mathcal{A}^n \rightarrow \mathbb{C})_{n \geq 1, \chi: [n] \rightarrow \{\ell, r\}}$$
which is uniquely determined by the requirement that
$$\varphi(a_1\cdots a_n) = \sum_{\pi \in \mathcal{P}^{(\chi)}(n)}\left(\prod_{V \in \pi}\kappa_{\chi | V}((a_1, \dots, a_n) | V)\right)$$
for every $n \geq 1$, $\chi: [n] \rightarrow \{\ell, r\}$, and $a_1, \dots, a_n \in \mathcal{A}$. These $(\kappa_\chi)_{n \geq 1, \chi: [n] \rightarrow \{\ell, r\}}$ are called the \textit{$(\ell, r)$-cumulants} of $(\mathcal{A}, \varphi)$.
\end{defn}

Given $\chi: [n] \rightarrow \{\ell, r\}$, $n \geq 1$, such that $\chi^{-1}(\{\ell\}) = \{i_1 < \cdots < i_p\}$ and $\chi^{-1}(\{r\}) = \{j_1 < \cdots < j_{n - p}\}$, the set of partitions $\mathcal{P}^{(\chi)}(n)$ appearing in the above definition is obtained by applying the permutation $\sigma_\chi \in S_n$ to the elements of $\mathrm{NC}(n)$, the set of non-crossing partitions of $[n]$, where $\sigma_\chi$ is defined by
$$\sigma_\chi(k) = \begin{cases}
i_k, &\text{if}\;k \leq p,\\
j_{n - k + 1}, &\text{if}\;k > p.
\end{cases}$$

\begin{defn}
Let $(\mathcal{A}, \varphi)$ be a non-commutative probability space and let $a_1, \dots, a_d$, $b_1, \dots, b_d$ be elements of $\mathcal{A}$. Denoting $c_{i ; \ell} = a_i$ and $c_{i ; r} = b_i$ for $1 \leq i \leq d$, the two-faced pairs $(a_1, b_1), \dots, (a_d, b_d)$ are said to be \textit{combinatorially-bi-free} if
$$\kappa_\chi(c_{i_1 ; \chi(i_1)}, \dots, c_{i_n ; \chi(i_n)}) = 0$$
whenever $n \geq 2$, $\chi: [n] \rightarrow \{\ell, r\}$, $i_1, \dots, i_n \in [d]$, and there exist $1 \leq p < q \leq n$ such that $i_p \neq i_q$.
\end{defn}

After giving the above definition, Mastnak and Nica asked the question of whether combinatorial-bi-free independence was equivalent to bi-free independence, and it was answered affirmatively by Charlesworth, Nelson, and Skoufranis in \cite{CNS15} using bi-non-crossing partitions. We refer the reader to \cite[Section 2]{CNS15} for details. For $\chi: [n] \rightarrow \{\ell, r\}$, $n \geq 1$, such that $\chi^{-1}(\{\ell\}) = \{i_1 < \cdots < i_p\}$ and $\chi^{-1}(\{r\}) = \{j_1 < \cdots < j_{n - p}\}$, the set of bi-non-crossing partitions $\mathrm{BNC}(\chi)$ defined in \cite[Section 2]{CNS15} coincides with $\mathcal{P}^{(\chi)}(n)$ which, from another diagrammatic point of view, consists of the non-crossing partitions of $[n]$ such that the numbers $1, \dots, n$ are rearranged according to the total order $<_\chi$ on $[n]$
defined by
$$i_1 <_\chi \cdots <_\chi i_p <_\chi j_{n - p} <_\chi \cdots <_\chi j_1.$$
For this reason, we also denote $\mathcal{P}^{(\chi)}(n) = \mathrm{BNC}(\chi)$ by $\mathrm{NC}_\chi(n)$. As lattices with respect to reverse refinement order, $\mathrm{NC}_\chi(n)$ is isomorphic to $\mathrm{NC}(n)$, thus the M\"{o}bius function $\mu_\chi$ on $\mathrm{NC}_\chi(n)$ is given by
$$\mu_\chi(\tau, \pi) = \mu(\sigma_\chi^{-1}\cdot\tau, \sigma_\chi^{-1}\cdot\pi),$$
where $\mu$ denotes the M\"{o}bius function on $\mathrm{NC}(n)$. Finally, as shown in \cite[Sections 3, 4]{CNS15}, the $(\ell, r)$-cumulants are the same as the bi-free cumulants, and we have the moment-cumulant formulas
$$\varphi(a_1\cdots a_n) = \sum_{\pi \in \mathrm{NC}_\chi(n)}\kappa_\pi^\chi(a_1, \dots, a_n)$$
and
$$\kappa_n^\chi(a_1, \dots, a_n) = \sum_{\pi \in \mathrm{NC}_\chi(n)}\varphi_\pi(a_1, \dots, a_n)\mu_\chi(\pi, 1_n)$$
for all $a_1, \dots, a_n \in \mathcal{A}$, where $\kappa_n^\chi(a_1, \dots, a_n) = \kappa_\chi(a_1, \dots, a_n)$ and
$\kappa_\pi^\chi(a_1, \dots, a_n)$ factors according to the blocks of $\pi$ by the multiplicativity of the family $(\kappa_n^\chi)_{n \geq 1, \chi: [n] \rightarrow \{\ell, r\}}$.

\subsection{Free and bi-free $\mathcal{R}$-transforms}

Recall that the joint distribution of a family $(a_i)_{i \in I}$ of random variables in a non-commutative probability space $(\mathcal{A}, \varphi)$ is the linear functional $\mu$ on the algebra $\mathbb{C}\langle X_i: i \in I\rangle$ of non-commutative polynomials in $|I|$ variables satisfying $\mu(P) = \varphi(h(P))$ for all $P \in \mathbb{C}\langle X_i: i \in I\rangle$, where $h: \mathbb{C}\langle X_i: i\in I\rangle \rightarrow \mathcal{A}$ is the unital algebra homomorphism such that $h(X_i) = a_i$.

If $a$ is a self-adjoint random variable in a $C^*$-probability space $(\mathcal{A},\varphi)$, then its distribution $\mu_a$ belongs to $\mathcal{M}_\mathbb{R}$. The Cauchy transform (or one-variable Green's function) of $a$ is defined as
$$G_a(z) = \varphi((z - a)^{-1}),$$
and the free $\mathcal{R}$-transform of $a$ is defined as
$$\mathcal{R}_a(z) = \sum_{n \geq 0}\kappa_{n + 1}(\underbrace{a, \dots, a}_{n + 1\;\text{times}})z^n,$$
where $(\kappa_n)_{n \in \mathbb{N}}$ are the free cumulants of $(\mathcal{A},\varphi)$. It turns out that the functions $G_a$ and $\mathcal{R}_a$ are analytic in a neighbourhood of $\infty$ and $0$, respectively, and the function
$$K_a(z) = \mathcal{R}_a(z) + \frac{1}{z}$$ satisfies $G_a(K_a(z)) = z$. One of the most important properties of the free $\mathcal{R}$-transform is that it linearizes the additive free convolution in the sense that
$$\mathcal{R}_{a' + a''}(z) = \mathcal{R}_{a'}(z) + \mathcal{R}_{a''}(z)$$
if $a'$ and $a''$ are free self-adjoint random variables in $(\mathcal{A},\varphi)$ or, equivalently,
$$\mathcal{R}_{\mu_{a'} \boxplus \mu_{a''}}(z) = \mathcal{R}_{\mu_{a'}}(z) + \mathcal{R}_{\mu_{a''}}(z),$$
which holds in a neighbourhood of $0$ \cite{VDN92}.

Analogously, if $(a, b)$ is a two-faced pair of commuting self-adjoint random variables in a $C^*$-probability space $(\mathcal{A},\varphi)$, i.e. $a = a^*$, $b = b^*$, and $[a, b] = 0$, then the distribution $\mu_{(a, b)}$ of $(a, b)$ is a Borel probability measure on $\mathbb{R}^2$ and the two-dimensional Cauchy transform (or two-variable Green's function) of $(a, b)$ is defined as
$$G_{(a, b)}(z, w) = \varphi((z - a)^{-1}(w - b)^{-1}),$$
which is an analytic function in a neighbourhood of
$\infty \times \infty$.

Note that if $(a, b)$ is a general two-faced pair in a $C^*$-probability space $(\mathcal{A},\varphi)$, then the bi-free cumulant $\kappa_n^\chi$ of $(a, b)$ depends on $\chi: [n] \rightarrow \{\ell, r\}$. Since we are interested in the case where $a$ and $b$ are commuting self-adjoint random variables, it turns out that all the bi-free cumulants of $(a, b)$ are real, and $\kappa_n^\chi$ depends on $\chi$ only through $|\chi^{-1}(\{\ell\})|$ and $|\chi^{-1}(\{r\})|$. Moreover, the commutativity of $a$ and $b$ implies that every bi-free cumulant of $(a, b)$ is a special free cumulant.

\begin{lem}\label{FreeBifreeK}
Let $(a, b)$ be a two-faced pair in a $C^*$-probability space $(\mathcal{A},\varphi)$ such that $a = a^*$, $b = b^*$, and $[a,b]=0$. Denote the free and bi-free cumulants of $(a, b)$ by
$$\kappa_{m, n}(a, b) = \kappa_{m + n}(\underbrace{a, \dots, a}_{m\;\text{times}}, \underbrace{b, \dots, b}_{n\;\text{times}})$$
and
$$\kappa_N^\chi(a, b) = \kappa_N^\chi(c_{\chi(1)}, \dots, c_{\chi(N)}),$$
respectively, where $\chi: [N] \rightarrow \{\ell, r\}$, $c_\ell = a$ and $c_r = b$ for $1 \leq k \leq N$. Then $\kappa_{m, n}(a, b) = \kappa_{m + n}^\chi(a, b)$ for all $\chi: [m + n] \rightarrow \{\ell, r\}$ such that $|\chi^{-1}(\{\ell\})| = m$ and $|\chi^{-1}(\{r\})| = n$.
\end{lem}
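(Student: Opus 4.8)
The plan is to match, term by term, the two Möbius-inversion expansions of the cumulants through the lattice isomorphism relating $\mathrm{NC}(N)$ and $\mathrm{NC}_\chi(N)$. Throughout, fix $\chi\colon[N]\to\{\ell,r\}$ with $|\chi^{-1}(\{\ell\})|=m$ and $|\chi^{-1}(\{r\})|=n$, write $a_i=c_{\chi(i)}$ for the bi-free arguments, and let $d_1,\dots,d_N$ be the arguments defining $\kappa_{m,n}(a,b)$, that is, $d_i=a$ for $i\le m$ and $d_i=b$ for $i>m$. The free Möbius inversion over $\mathrm{NC}(N)$ and the bi-free cumulant--moment formula from the excerpt yield
\begin{align*}
\kappa_N^\chi(a,b)&=\sum_{\pi\in\mathrm{NC}_\chi(N)}\varphi_\pi(a_1,\dots,a_N)\,\mu_\chi(\pi,1_N),\\
\kappa_{m,n}(a,b)&=\sum_{\widehat\pi\in\mathrm{NC}(N)}\varphi_{\widehat\pi}(d_1,\dots,d_N)\,\mu(\widehat\pi,1_N),
\end{align*}
and I would reindex the first sum by the bijection $\pi\mapsto\widehat\pi=\sigma_\chi^{-1}\cdot\pi$ of $\mathrm{NC}_\chi(N)$ onto $\mathrm{NC}(N)$, then verify that the corresponding summands are equal.

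First, the Möbius factors agree: from $\mu_\chi(\tau,\pi)=\mu(\sigma_\chi^{-1}\cdot\tau,\sigma_\chi^{-1}\cdot\pi)$ and $\sigma_\chi^{-1}\cdot 1_N=1_N$ one gets $\mu_\chi(\pi,1_N)=\mu(\widehat\pi,1_N)$. Next, since both $\varphi_\pi$ and $\varphi_{\widehat\pi}$ factor multiplicatively over blocks, each block being evaluated as $\varphi$ of its product taken in the natural linear order, I would compare block by block. This is where the commutativity $[a,b]=0$ is used: for a block $V$ of $\pi$ the ordered product $\prod_{i\in V}c_{\chi(i)}$ is a word in $a$ and $b$ that collapses to $a^{m_V}b^{n_V}$, where $m_V=|\{i\in V:\chi(i)=\ell\}|$ and $n_V=|\{i\in V:\chi(i)=r\}|$, so this block contributes the factor $\varphi(a^{m_V}b^{n_V})$.

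Finally I would check that the matched block $\widehat V=\sigma_\chi^{-1}(V)$ of $\widehat\pi$ produces the same factor. The definition of $\sigma_\chi$ shows that it maps $\{1,\dots,m\}$ bijectively onto the left positions $\chi^{-1}(\{\ell\})$ and $\{m+1,\dots,N\}$ onto the right positions $\chi^{-1}(\{r\})$; hence $\widehat V$ meets $\{1,\dots,m\}$ in exactly $m_V$ points and $\{m+1,\dots,N\}$ in exactly $n_V$ points. Because $d_i=a$ exactly when $i\le m$, the product $\prod_{i\in\widehat V}d_i$ equals $a^{m_V}b^{n_V}$ (here no commutation is even needed, as all the $a$'s precede all the $b$'s), contributing $\varphi(a^{m_V}b^{n_V})$ once more. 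Therefore $\varphi_\pi(a_1,\dots,a_N)=\varphi_{\widehat\pi}(d_1,\dots,d_N)$, the two sums coincide term by term, and $\kappa_N^\chi(a,b)=\kappa_{m,n}(a,b)$. Since the right-hand side is independent of $\chi$, this simultaneously establishes that $\kappa_N^\chi(a,b)$ depends on $\chi$ only through $(m,n)$.

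The step I expect to be the main obstacle is the combinatorial bookkeeping just described: confirming that $\sigma_\chi$ preserves the left/right counts of every block, so that the block moments on the two sides really do match. The supporting observation that $\varphi_\pi$ takes each block product in its natural order---precisely what legitimizes the commutativity reduction---is routine but worth recording explicitly, since it is the only place where the hypothesis $[a,b]=0$ enters.
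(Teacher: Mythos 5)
Your argument is correct and follows essentially the same route as the paper: both expand the two cumulants by M\"obius inversion over $\mathrm{NC}(m+n)$ and $\mathrm{NC}_\chi(m+n)$, transport one sum to the other via the bijection $\sigma_\chi$, match the M\"obius factors using $\mu_\chi(\tau,\pi)=\mu(\sigma_\chi^{-1}\cdot\tau,\sigma_\chi^{-1}\cdot\pi)$, and invoke $[a,b]=0$ to identify the block moments. The only difference is that you spell out the block-by-block count $(m_V,n_V)$ explicitly, which the paper leaves implicit; this is a harmless elaboration, not a change of method.
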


\begin{proof}
By the moment-cumulant formulas, we have
$$\kappa_{m, n}(a, b) = \sum_{\pi \in \mathrm{NC}(m + n)} \varphi_\pi(\underbrace{a, \dots, a}_{m\;\text{times}}, \underbrace{b, \dots, b}_{n\;\text{times}})\mu(\pi, 1_{m + n})$$
and
$$\kappa_{m + n}^\chi(a, b) = \sum_{\pi \in \mathrm{NC}_\chi(m + n)}\varphi_\pi(c_{\chi(1)}, \dots, c_{\chi(m + n)})\mu_\chi(\pi, 1_{m + n}),$$
where $\mu$ and $\mu_\chi$ denote the M\"{o}bius functions on $\mathrm{NC}(m + n)$ and $\mathrm{NC}_\chi(m + n)$, respectively. For each partition $\pi \in \mathrm{NC}(m + n)$, which has a linear non-crossing diagram associated to it, the linear diagram of the corresponding partition $\tilde{\pi} = \sigma_\chi\cdot\pi \in \mathrm{NC}_\chi(m + n)$ under the bijection $\sigma_\chi : \mathrm{NC}(m + n) \rightarrow \mathrm{NC}_\chi(m + n)$ is obtained by relabelling the numbers $1, \dots, m + n$ in the linear diagram of $\pi$ with $i_1, \dots, i_m, j_n, \dots, j_1$ where $\{i_1 <
\cdots < i_m\} = \chi^{-1}(\{\ell\})$ and $\{j_1 < \cdots < j_n\} = \chi^{-1}(\{r\})$. Since $a$ and $b$ commute, we have
$$\varphi_\pi(\underbrace{a, \dots, a}_{m\;\text{times}}, \underbrace{b, \dots, b}_{n\;\text{times}}) = \varphi_{\tilde{\pi}}(c_{\chi(1)}, \dots, c_{\chi(m + n)})$$
for every $\pi \in \mathrm{NC}(m + n)$. Moreover, since
$$\mu_\chi(\tilde{\pi}, 1_{m + n}) = \mu(\sigma_\chi^{-1}\cdot\tilde{\pi}, 1_{m + n}) = \mu(\pi, 1_{m + n})$$
for every $\tilde{\pi} \in \mathrm{NC}_\chi(m + n)$, the assertion follows.
\end{proof}

\begin{notn}
Let $(a, b)$ be as above and $m, n \geq 0$ such that $m + n \geq 1$. We extend the notations used in the above lemma for the free and bi-free cumulants of $(a, b)$ to all of $\mathrm{NC}(m + n)$ and $\mathrm{NC}_\chi(m + n)$, where $\chi: [m + n] \rightarrow \{\ell, r\}$ such that $|\chi^{-1}(\{\ell\})| = m$ and $|\chi^{-1}(r)| = n$. That is, for $\pi$ in $\mathrm{NC}(m + n)$ or $\mathrm{NC}_\chi(m + n)$, we put
$$\kappa_\pi(a, b) = \kappa_\pi(\underbrace{a, \dots, a}_{m\;\text{times}},\underbrace{b, \dots, b}_{n\;\text{times}})$$
and
$$\kappa_\pi^\chi(a, b) = \kappa_\pi^\chi(c_{\chi(1)}, \dots, c_{\chi(m + n)}),$$
where $c_\ell = a$ and $c_r = b$ for $1 \leq k \leq m + n$. Similarly, we put
$$\varphi_\pi(a, b) = \varphi_\pi(\underbrace{a, \dots, a}_{m\;\text{times}},\underbrace{b, \dots, b}_{n\;\text{times}})$$
for $\pi$ in $\mathrm{NC}(m + n)$.
\end{notn}

For a two-faced pair $(a, b)$ in a $C^*$-probability space, let
$$\mathcal{R}_{(a, b)}(z, w) = \sum_{\substack{m, n \geq 0\\m + n \geq 1}}\kappa_{m, n}(a, b)z^mw^n,$$
be the (partial) bi-free $\mathcal{R}$-transform of $(a, b)$. Then we have the following relation for bi-free $\mathcal{R}$-transforms, which was proved in \cite{V13} using analytic techniques and re-derived in \cite{S14} via combinatorics.

\begin{thm}\label{bifreeR}
The following equality of germs of holomorphic functions holds in a neighbourhood of $(0, 0)$ in $\mathbb{C}^2$:
$$\mathcal{R}_{(a, b)}(z, w) = 1 + z\mathcal{R}_a(z) + w\mathcal{R}_b(w) - \dfrac{zw}{G_{(a, b)}(K_a(z), K_b(w))}.$$
\end{thm}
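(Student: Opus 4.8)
The plan is to reduce the stated identity to a single functional equation for the two-variable Cauchy transform and then to establish that equation by a spanning-block decomposition of bi-non-crossing partitions. First I would split the defining series of $\mathcal{R}_{(a,b)}$ into its pure-left, pure-right, and genuinely mixed parts. Since $z\mathcal{R}_a(z)=\sum_{m\ge1}\kappa_{m,0}(a,b)z^m$ and $w\mathcal{R}_b(w)=\sum_{n\ge1}\kappa_{0,n}(a,b)w^n$, the asserted formula is equivalent to
\begin{equation*}
G_{(a,b)}(K_a(z),K_b(w)) = \frac{zw}{1 - C(z,w)}, \qquad C(z,w) := \sum_{m,n\ge1}\kappa_{m,n}(a,b)\,z^mw^n .
\end{equation*}
Here I use Lemma~\ref{FreeBifreeK} to identify the mixed bi-free cumulants with the free cumulants $\kappa_{m,n}(a,b)$, and I work throughout with germs of holomorphic functions, so that all the series below converge near the relevant points.

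To prove the displayed equation I would expand $G_{(a,b)}(u,v)=\sum_{m,n\ge0}\varphi(a^mb^n)u^{-m-1}v^{-n-1}$ and apply the bi-free moment-cumulant formula to each coefficient. Choosing $\chi$ with its first $m$ values equal to $\ell$ and its last $n$ equal to $r$, the expansion of $\varphi(a^mb^n)$ becomes a sum over the non-crossing partitions of a linear diagram whose first $m$ nodes carry $a$ and whose last $n$ nodes carry $b$; a block with $s$ of the former and $t$ of the latter contributes the factor $\kappa_{s,t}(a,b)$ by Lemma~\ref{FreeBifreeK}. I then decompose each such partition according to its mixed blocks, meaning those meeting both the $a$-nodes and the $b$-nodes. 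If there is none, the partition is a pair consisting of a non-crossing partition of the $a$-nodes and one of the $b$-nodes, and these terms resum to $G_a(u)G_b(v)$. Otherwise, let $S$ be the outermost mixed block, with $s\ge1$ left and $t\ge1$ right nodes; the non-crossing condition forces the complement of $S$ to split into $s$ purely-left regions, $t$ purely-right regions, and a single central region lying between the innermost left and right nodes of $S$, with no block joining two distinct regions or a region to $S$.

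The decisive bookkeeping is then to sum over all fillings of these regions. Each purely-left region contributes $\sum_{k\ge0}\varphi(a^k)u^{-k}=uG_a(u)$, each purely-right region contributes $vG_b(v)$, and the central region, being an unrestricted bi-non-crossing configuration, contributes $uvG_{(a,b)}(u,v)$; collecting these with the weight $u^{-m-1}v^{-n-1}$ and the cumulant $\kappa_{s,t}(a,b)$ of $S$ yields, after cancellation of the powers of $u$ and $v$, the self-similar identity
\begin{equation*}
G_{(a,b)}(u,v) = G_a(u)G_b(v) + C\bigl(G_a(u),G_b(v)\bigr)\,G_{(a,b)}(u,v).
\end{equation*}
Solving for $G_{(a,b)}$ and substituting $u=K_a(z)$, $v=K_b(w)$, so that $G_a(u)=z$ and $G_b(v)=w$, gives the reduced equation and hence the theorem. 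I expect the main obstacle to be the combinatorial heart of the second paragraph: proving that the outermost mixed block isolates the diagram into non-interacting regions, via a ``protection'' argument based on non-crossingness, and that exactly $s$ purely-left and $t$ purely-right regions occur, since it is these counts that produce the factors $G_a(u)^s$ and $G_b(v)^t$ needed to reconstruct $C$, together with verifying that the central region is an unrestricted configuration so that the recursion closes.
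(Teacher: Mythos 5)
Your proposal is correct, but note that the paper itself does not prove Theorem \ref{bifreeR}: it is quoted as known, with the original analytic proof attributed to \cite{V13} and a combinatorial re-derivation to \cite{S14}. What you have written is essentially a reconstruction of the combinatorial route of \cite{S14}. The reduction to the functional equation $G_{(a,b)}(u,v)=G_a(u)G_b(v)+C\bigl(G_a(u),G_b(v)\bigr)G_{(a,b)}(u,v)$ is exactly the right normal form (substituting $u=K_a(z)$, $v=K_b(w)$ recovers the theorem), and the spanning-block decomposition works: every mixed block straddles the central gap of the linear $\chi$-order $1,\dots,m,m+n,\dots,m+1$, so by non-crossingness the convex hulls of any two mixed blocks are nested, an outermost mixed block $S$ exists and is unique, every remaining block sits in a single gap of $S$, and the fillings of distinct gaps are independent; the weights $u^{-1-s}v^{-1-t}\kappa_{s,t}\,(uG_a(u))^s(vG_b(v))^t\,uvG_{(a,b)}(u,v)$ then resum to $\kappa_{s,t}G_a(u)^sG_b(v)^tG_{(a,b)}(u,v)$ as you claim. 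Two details deserve to be made explicit when you write this up. First, for the two regions lying \emph{outside} the convex hull of $S$ (before its first left node and after its last right node), non-crossingness with $S$ alone does not prevent a block from joining them; such a block would however be mixed and would contain $S$ in its convex hull, so it is excluded by the \emph{maximality} of $S$, not by non-crossingness with $S$ --- this is the one place where outermostness, and not just the crossing condition, is used. Second, your appeal to Lemma \ref{FreeBifreeK} to identify the cumulant of a mixed block with $\kappa_{s,t}(a,b)$ uses $[a,b]=0$; this is harmless under the paper's standing bipartite hypothesis, but the identity holds for arbitrary two-faced pairs if one works directly with $\kappa^{\chi'}_{s+t}$ for $\chi'=(\ell,\dots,\ell,r,\dots,r)$, which is how \cite{S14} (and \cite{V13}, by purely analytic means) obtain the general statement.
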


The bi-free $\mathcal{R}$-transform is an analogue of the free $\mathcal{R}$-transform in the bi-free setting. More precisely, if $(a', b')$ and $(a'', b'')$ are bi-free two-faced pairs, then we have
$$\mathcal{R}_{(a' + a'', b' + b'')}(z, w)=\mathcal{R}_{(a', b')}(z, w) + \mathcal{R}_{(a'', b'')}(z, w)$$
for $(z, w)$ near $(0, 0)$.

\subsection{Moment sequences}

Let $\mathbb{Z}_+^2 = \{(m, n) : m, n \in \mathbb{N} \cup \{0\}\}$. Given a $2$-sequence $R = \{R_{m, n}\}_{(m, n) \in \mathbb{Z}_+^2}$ with $R_{0, 0} > 0$, one can equip the algebra $\mathbb{C}[s, t]$ of polynomials in commuting variables $s$ and $t$ with a sesquilinear form $[\cdot, \cdot]_R$ satisfying
\begin{equation}\label{sesquilinear}
[s^{m_1}t^{n_1}, s^{m_2}t^{n_2}]_R = R_{m_1 + m_2, n_1 + n_2}
\end{equation}
for $(m_1, n_1), (m_2, n_2) \in \mathbb{Z}_+^2$. Note that if $p = \sum_{j = 1}^\ell c_js^{m_j}t^{n_j} \in \mathbb{C}[s, t]$, then
$$[p, p]_R = \sum_{j, k = 1}^\ell c_j\overline{c}_kR_{m_j + m_k, n_j + n_k}.$$

Recall that the $2$-sequence $\{\kappa_{m, n}\}_{(m, n) \in \mathbb{Z}_+^2 \backslash \{(0, 0)\}}$ of the free cumulants of a pair of commuting self-adjoint random variables $(a, b)$ in some $C^*$-probability space contain the full information about $(a, b)$. It turns out that the study of such $2$-sequences is closely related to the two-parameter moment problems \cite{D57, PV99}. The following result is from \cite{D57}.

\begin{thm}\label{moment1}
A $2$-sequence $R = \{R_{m, n}\}_{(m, n) \in \mathbb{Z}_+^2}$ with $R_{0, 0} > 0$ is a moment $2$-sequence, i.e. there exists a finite positive Borel measure $\rho$ on $\mathbb{R}^2$ such that
$$R_{m, n} = \int_{\mathbb{R}^2}s^mt^n\;d\rho(s, t),\;\;\;\;\;(m, n) \in \mathbb{Z}_+^2,$$
if there exists a finite number $L > 0$ with the following properties: for all $p \in \mathbb{C}[s,t]$,
\begin{enumerate}[$(1)$]
\item $[p, p]_R \geq 0$,

\item $|[sp, p]_R| \leq L\cdot[p, p]_R$ and $|[p, tp]_R| \leq L\cdot[p,p]_R$ hold.
\end{enumerate}
If these conditions hold, then the representing measure $\rho$ of $R$ is compactly supported on $[-L, L]^2$ and uniquely determined.
\end{thm}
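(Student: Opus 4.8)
The plan is to run a GNS-type construction on the form $[\cdot,\cdot]_R$ and then invoke the spectral theorem for a pair of commuting bounded self-adjoint operators. Condition $(1)$ says that $[\cdot,\cdot]_R$ is positive semi-definite on $\mathbb{C}[s,t]$, so the Cauchy--Schwarz inequality holds and the null space $N = \{p : [p,p]_R = 0\}$ coincides with $\{p : [p,q]_R = 0 \text{ for all } q\}$; in particular $N$ is a linear subspace. Passing to the quotient $\mathbb{C}[s,t]/N$ gives a genuine inner product space, with $\langle [p],[q]\rangle := [p,q]_R$, and I would denote its completion by $H$. Writing $[p]$ for the class of $p$ and setting $\xi = [1]$, one has $\|\xi\|^2 = [1,1]_R = R_{0,0} > 0$.

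Next I would introduce the multiplication maps $S[p] = [sp]$ and $T[p] = [tp]$. From the defining relation \eqref{sesquilinear} one reads off directly that $[sp,q]_R = [p,sq]_R$ and $[tp,q]_R = [p,tq]_R$, so both maps are symmetric for the form; combined with Cauchy--Schwarz this shows $N$ is invariant under multiplication by $s$ and by $t$, whence $S$ and $T$ descend to well-defined operators on $\mathbb{C}[s,t]/N$. Since $s$ and $t$ commute in $\mathbb{C}[s,t]$, we get $ST = TS$. Condition $(2)$ reads exactly $|\langle S[p],[p]\rangle| \le L\,\|[p]\|^2$ and $|\langle [p],T[p]\rangle| \le L\,\|[p]\|^2$; because $S$ and $T$ are symmetric, a polarization argument upgrades these numerical bounds to the norm bounds $\|S[p]\| \le L\,\|[p]\|$ and $\|T[p]\| \le L\,\|[p]\|$. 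Hence $S$ and $T$ extend to commuting bounded self-adjoint operators on $H$, each of norm at most $L$.

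With this in hand the conclusion follows from the spectral theorem. The pair $(S,T)$ (equivalently the normal operator $S+iT$) admits a joint projection-valued spectral measure $E$ on the Borel sets of $\mathbb{R}^2$, supported in $\sigma(S)\times\sigma(T) \subseteq [-L,L]^2$, with $S = \int s\,dE$ and $T = \int t\,dE$. Defining $\rho(B) = \langle E(B)\xi, \xi\rangle$ produces a finite positive Borel measure of total mass $R_{0,0}$ supported in $[-L,L]^2$, and the functional calculus gives
\[
\int_{\mathbb{R}^2} s^m t^n \, d\rho(s,t) = \langle S^m T^n \xi, \xi\rangle = [s^m t^n, 1]_R = R_{m,n},
\]
so $R$ is a moment $2$-sequence represented by $\rho$. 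For uniqueness, note that any representing measure $\rho'$ satisfies $\int s\,|p|^2\,d\rho' = [sp,p]_R$ and $\int t\,|p|^2\,d\rho' = [tp,p]_R$, so condition $(2)$ forces $|s| \le L$ and $|t| \le L$ $\rho'$-almost everywhere; thus every representing measure is supported in the compact square $[-L,L]^2$, and since polynomials are dense in $C([-L,L]^2)$ by Stone--Weierstrass, the moments determine $\rho'$ completely.

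The main obstacle is the passage from the one-sided numerical estimates in condition $(2)$ to the operator norm bounds $\|S\|, \|T\| \le L$. This step relies on the symmetry of the multiplication operators, which is the only mechanism that converts the seemingly weak inequalities $|[sp,p]_R| \le L\,[p,p]_R$ into genuine boundedness; it is precisely this boundedness that confines the joint spectrum, and hence the support of $\rho$, to $[-L,L]^2$ and simultaneously drives the uniqueness argument.
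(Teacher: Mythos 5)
Your argument is correct, but there is nothing in the paper to compare it against: Theorem \ref{moment1} is imported from Devinatz \cite{D57} and stated without proof, so your proposal supplies a proof rather than reproducing one. What you wrote is the standard operator-theoretic route (and essentially Devinatz's own): GNS on the form $[\cdot,\cdot]_R$, the observation that \eqref{sesquilinear} makes the multiplication maps symmetric, the polarization upgrade from the numerical-range bound $|\langle S[p],[p]\rangle|\leq L\|[p]\|^2$ to $\|S\|\leq L$ (valid for a symmetric operator on a pre-Hilbert space whose domain it preserves), and then the joint spectral measure of the commuting pair $(S,T)$ evaluated at $\xi=[1]$. All of these steps check out, and the identity $\int s^mt^n\,d\rho=\langle S^mT^n\xi,\xi\rangle=R_{m,n}$ is correct. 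The one place you are slightly too quick is the uniqueness paragraph: the single inequality $\bigl|\int s\,|p|^2\,d\rho'\bigr|\leq L\int|p|^2\,d\rho'$ does not by itself localize the support, since $\{|p|^2\}$ is not dense among positive test functions on a possibly unbounded support. You need the iterated estimate $\int s^{2k}|p|^2\,d\rho'=[s^kp,s^kp]_R\leq L^{2k}[p,p]_R$, which does follow from the operator bound $\|S\|\leq L$ you already established (the inner product induced by any representing measure coincides with $[\cdot,\cdot]_R$), and then a Chebyshev argument with $k\to\infty$ gives $\rho'(\{|s|>L+\epsilon\})=0$. With that half-line inserted, the support bound $[-L,L]^2$ holds for every representing measure and Stone--Weierstrass finishes uniqueness, exactly as you say.
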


\begin{rmk}\label{moment2}
\begin{enumerate}[$(1)$]
\item Given a $2$-sequence $\{R_{m, n}\}_{(m, n) \in \mathbb{Z}_+^2}$ with $R_{0, 0} = 1$, there exists a non-commutative probability space $(\mathcal{A},\varphi)$ and commuting random variables $a, b \in \mathcal{A}$ such that $\varphi(a^mb^n) = R_{m, n}$ for all $(m, n) \in \mathbb{Z}_+^2$.

\item The condition (1) in Theorem \ref{moment1} together with the condition that for every fixed $m_0$ and $n_0$ in $\mathbb{N} \cup \{0\}$ the $1$-sequences $\{R_{m, 2n_0}\}_{m \geq 0}$ and $\{R_{2m_0, n} + R_{0, n}\}_{n \geq 0}$ are determined moment sequences, guarantee that $R$ is a determined moment sequence (see \cite[Section 1.2]{D57}). In this case, however, the representing measure of $R$ may not be compactly supported. In general, one can determine whether an $n$-sequence is a moment sequence by embedding the sequence into a higher dimensional space. We refer the reader to \cite{PV99} for more details.
\end{enumerate}
\end{rmk}

\section{Bi-free infinitely divisible distributions}

In this section, we study the $\boxplus\boxplus$-infinite divisibility of compactly supported probability measures on $\mathbb{R}^2$ and provide the bi-free analogue of the L\'{e}vy-Hin\v{c}in formula.

\subsection{A bi-free limit theorem for bipartite systems}

The non-commutative probability spaces considered throughout the paper are assumed to be bipartite, i.e. all left variables commute with all right variables.

\begin{thm}\label{BFlimthm}
For each $N \in \mathbb{N}$, let $\{(a_{N ; k}, b_{N ; k})\}_{k = 1}^N$ be a row of two-faced pairs in some non-commutative probability space $(\mathcal{A}_N, \varphi_N)$. Furthermore, assume each $\mathcal{A}_N$ is bipartite and the two-faced pairs $(a_{N ; 1}, b_{N ; 1}), \dots ,(a_{N ; N}, b_{N ; N})$ are bi-free and identically distributed. Then the following two statements are equivalent.
\begin{enumerate}[$(1)$]
\item There is a two-faced pair $(a, b)$ in some non-commutative probability space $(\mathcal{A}, \varphi)$ such that $[a, b] = 0$ and
$$\left(\sum_{k = 1}^Na_{N ; k}, \sum_{k = 1}^Nb_{N ;k}\right) \xrightarrow{\textnormal{dist}} (a, b).$$

\item For all $m, n \geq 0$ with $m + n \geq 1$, the limits $\displaystyle\lim_{N \rightarrow \infty}N\cdot\varphi_N(a_{N ; k}^mb_{N ; k}^n)$ exist and are independent of $k$.
\end{enumerate}
Furthermore, if these assertions hold, then the bi-free cumulants of $(a, b)$ are given by
$$\kappa_{m + n}^\chi(a, b) = \lim_{N \rightarrow \infty}N\cdot\varphi_N(a_{N ; k}^mb_{N ; k}^n),$$
where $\chi: [m + n] \rightarrow \{\ell, r\}$ satisfies $|\chi^{-1}(\{\ell\})| = m$ and $|\chi^{-1}(\{r\})| = n$.
\end{thm}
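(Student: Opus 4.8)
The plan is to prove the equivalence by translating the convergence in distribution into convergence of mixed moments, and then to pass from moments to bi-free cumulants via the moment-cumulant formula, exploiting the additivity of bi-free cumulants under bi-free convolution together with the identically-distributed hypothesis. Since the spaces are bipartite, all left variables commute with all right variables, so for each fixed $N$ the pair $\left(\sum_k a_{N;k}, \sum_k b_{N;k}\right)$ has mixed moments $\varphi_N\!\left(\left(\sum_k a_{N;k}\right)^m \left(\sum_k b_{N;k}\right)^n\right)$ that are well-defined without ambiguity about ordering. Convergence in distribution of this sequence of pairs is, by definition, the statement that each such mixed moment converges as $N \to \infty$. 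The strategy is therefore to show that, under the bi-free and identically-distributed assumptions, these mixed moments are controlled entirely by the scaled quantities $N \cdot \varphi_N(a_{N;k}^m b_{N;k}^n)$.

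First I would compute the bi-free cumulants of the summed pair. By the additivity of bi-free cumulants under bi-free convolution (the linearization property stated after Theorem \ref{bifreeR}) and the fact that the $N$ pairs are identically distributed, the bi-free cumulant of the sum is $N$ times the common bi-free cumulant of a single pair:
\begin{equation*}
\kappa_{m+n}^\chi\!\left(\sum_k a_{N;k}, \sum_k b_{N;k}\right) = N \cdot \kappa_{m+n}^\chi(a_{N;k}, b_{N;k}).
\end{equation*}
Next, I would relate the single-pair bi-free cumulant $\kappa_{m+n}^\chi(a_{N;k}, b_{N;k})$ to the single-pair mixed moment $\varphi_N(a_{N;k}^m b_{N;k}^n)$ through the moment-cumulant inversion formula. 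Using Lemma \ref{FreeBifreeK}, the bi-free cumulant equals the corresponding free cumulant $\kappa_{m,n}$, which is expanded via the Möbius formula as a sum over $\mathrm{NC}(m+n)$ of products of lower-order moments weighted by $\mu(\pi, 1_{m+n})$. The term corresponding to the full partition $1_{m+n}$ contributes exactly $\varphi_N(a_{N;k}^m b_{N;k}^n)$, while every proper partition $\pi < 1_{m+n}$ factors into a product over at least two blocks of strictly smaller mixed moments.

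The key step, and the main obstacle, is the asymptotic bookkeeping that shows only the ``single-block'' contributions survive the scaling by $N$. The plan is an induction on $m+n$. One assumes inductively that (2) holds and that each lower-order scaled moment $N \cdot \varphi_N(\cdots)$ converges; equivalently, that each single-pair moment $\varphi_N(a_{N;k}^m b_{N;k}^n)$ is $O(1/N)$ for $m+n \geq 1$. In the Möbius expansion of $N \cdot \kappa_{m+n}^\chi(a_{N;k}, b_{N;k})$, a partition $\pi$ with $|\pi|$ blocks contributes a term of order $N \cdot O(N^{-|\pi|}) = O(N^{1-|\pi|})$; hence every $\pi$ with $|\pi| \geq 2$ vanishes in the limit, and only $\pi = 1_{m+n}$ survives, giving
\begin{equation*}
\lim_{N\to\infty} \kappa_{m+n}^\chi\!\left(\sum_k a_{N;k}, \sum_k b_{N;k}\right) = \lim_{N\to\infty} N \cdot \varphi_N(a_{N;k}^m b_{N;k}^n).
\end{equation*}
For the direction $(2) \Rightarrow (1)$ I would then assemble these limiting cumulants into a candidate $2$-sequence and invoke Remark \ref{moment2}(1) to realize it as the bi-free cumulant sequence of a genuine commuting pair $(a,b)$; convergence of all mixed moments of the sum to those of $(a,b)$ follows by re-expanding moments from cumulants via the forward moment-cumulant formula. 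For $(1) \Rightarrow (2)$, convergence in distribution gives convergence of all mixed moments of the sum, hence of all its bi-free cumulants (cumulants being polynomials in moments), and running the same asymptotic identity backwards recovers the existence of $\lim_N N \cdot \varphi_N(a_{N;k}^m b_{N;k}^n)$ and its independence of $k$. The final cumulant formula in the theorem is precisely the displayed limit, identifying $\kappa_{m+n}^\chi(a,b)$ with $\lim_N N \cdot \varphi_N(a_{N;k}^m b_{N;k}^n)$.
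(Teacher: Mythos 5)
Your proposal is correct and follows essentially the same route as the paper: both arguments rest on the moment--cumulant inversion, the vanishing of mixed bi-free cumulants for bi-free pairs, and the observation that a partition with $|\pi|$ blocks contributes $O(N^{1-|\pi|})$ so that only the single-block term survives the scaling (this is precisely the content of the paper's Lemmas \ref{FreeBifreeK} and \ref{MCLemma}, which you re-derive inline). The one organizational difference is in $(1)\Rightarrow(2)$: the paper runs an explicit double induction on $(m,n)$ to extract the existence of each $\lim_N N^{|\tau|}\kappa_\tau^\chi$ from the expansion of $\varphi(a^mb^n)$, whereas you obtain the convergence of $N\cdot\kappa_{m+n}^\chi(a_{N;k},b_{N;k})$ directly from additivity of cumulants together with the fact that the cumulants of the summed pair are polynomials in its convergent mixed moments --- a mild but genuine streamlining.
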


\begin{rmk}
\begin{enumerate}[$(1)$]
\item We follow the usual notion of convergence in distribution in the free probability context. That is, assertion (1) in the theorem above holds if and only if
$$\lim_{N \rightarrow \infty}\varphi_N\left(\left(\sum_{k = 1}^Na_{N ; k}\right)^m\left(\sum_{k = 1}^Nb_{N ; k}\right)^n\right) = \varphi(a^mb^n)$$
for all $(m, n) \in \mathbb{Z}_+^2$ and all mixed moments $\varphi(a^mb^n)$ exist.

\item By Lemma \ref{FreeBifreeK}, the bi-free cumulants $\kappa_{m + n}^\chi(a, b)$ are the same as the free cumulants $\kappa_{m, n}(a, b)$ for all $m, n \geq 0$ such that $m + n \geq 1$.
\end{enumerate}
\end{rmk}

To prove Theorem \ref{BFlimthm}, we need the following lemma which relates convergence of moments to convergence of cumulants.

\begin{lem}\label{MCLemma}
For each $N \in \mathbb{N}$, let $(\mathcal{A}_N, \varphi_N)$ be a non-commutative probability space and let $\kappa^N$ be the corresponding free cumulants. Let $(a_N, b_N)$ be a two-faced pair in $(\mathcal{A}_N, \varphi_N)$ such that $[a_N, b_N] = 0$, then the following two statements are equivalent.
\begin{enumerate}[$(1)$]
\item For all $m, n \geq 0$ with $m + n \geq 1$, the limits $\displaystyle\lim_{N \rightarrow \infty}N\cdot\varphi_N(a_N^mb_N^n)$ exist.

\item For all $m, n \geq 0$ with $m + n \geq 1$, the limits $\displaystyle\lim_{N \rightarrow \infty}N\cdot\kappa_{m, n}^N(a_N, b_N)$ exist.
\end{enumerate}
Furthermore, if these conditions hold, then the limits in \textnormal{(1)} and \textnormal{(2)} are the same.
\end{lem}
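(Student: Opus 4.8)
The plan is to prove the equivalence of (1) and (2) by induction on the total degree $m+n$, exploiting the moment-cumulant formulas together with the key observation that the relevant partition sums decompose according to how many blocks a partition has. The starting point is Lemma \ref{FreeBifreeK}, which tells us that since $a_N$ and $b_N$ commute, the bi-free cumulants coincide with the ordinary free cumulants $\kappa_{m,n}^N(a_N,b_N)$; this lets me work entirely inside the lattice $\mathrm{NC}(m+n)$ and use the standard free moment-cumulant inversion. Concretely, I would fix a pair $(m,n)$ with $m+n\ge 1$ and expand
$$\varphi_N(a_N^m b_N^n)=\sum_{\pi\in\mathrm{NC}(m+n)}\kappa_\pi^N(a_N,b_N),$$
separating the single term $\pi=1_{m+n}$ (one block, contributing exactly $\kappa_{m,n}^N(a_N,b_N)$) from the remaining terms, each of which is a product $\prod_{V\in\pi}\kappa_{|V|}^N(\cdots)$ over a partition with at least two blocks.

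The heart of the argument is a counting/scaling observation. Multiply the displayed identity by $N$. On partitions $\pi$ with $|\pi|=r$ blocks, the product $\kappa_\pi^N$ is a product of $r$ factors, each a free cumulant of smaller degree; writing $N\cdot\kappa_\pi^N = N^{1-r}\prod_{V\in\pi}\bigl(N\cdot\kappa_{|V|}^N\bigr)$, I see that every block factor is one of the scaled cumulants of strictly smaller total degree. Under the inductive hypothesis that all such scaled cumulants converge (hence are bounded in $N$), a partition with $r\ge 2$ blocks carries a prefactor $N^{1-r}\to 0$. Thus $N\cdot\varphi_N(a_N^m b_N^n)$ and $N\cdot\kappa_{m,n}^N(a_N,b_N)$ differ by a quantity that tends to $0$, which simultaneously gives the equivalence of the two limit conditions and shows that the limits agree. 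I would run the induction on $m+n$: the base case $m+n=1$ is immediate because $\mathrm{NC}(1)$ has a single partition, so the moment equals the cumulant exactly. For the inductive step, one direction assumes convergence of the lower-degree cumulants plus convergence of the degree-$(m+n)$ moments and deduces convergence of the degree-$(m+n)$ cumulant via the inversion formula
$$\kappa_{m,n}^N(a_N,b_N)=\sum_{\pi\in\mathrm{NC}(m+n)}\varphi_\pi(a_N^m b_N^n)\,\mu(\pi,1_{m+n}),$$
again isolating the $\pi=1_{m+n}$ term and bounding the rest by the same $N^{1-r}$ scaling; the reverse direction is symmetric.

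The main obstacle I anticipate is making the ``products of lower-degree scaled cumulants stay bounded'' step fully rigorous and correctly organized as a simultaneous induction. One must be careful that in a multi-block partition the factors $\varphi_\pi$ (or $\kappa_\pi^N$) really do factor into pieces of strictly smaller degree so that the inductive hypothesis applies to each, and that the bookkeeping of which scaled quantities are assumed convergent is consistent across both implications. A clean way to handle this is to prove by strong induction the combined statement that, for every $(m,n)$ with $m+n\le d$, the limit $\lim_N N\cdot\varphi_N(a_N^m b_N^n)$ exists if and only if $\lim_N N\cdot\kappa_{m,n}^N(a_N,b_N)$ exists, and that when they exist they are equal; the finiteness of $\mathrm{NC}(m+n)$ guarantees all sums are finite, so no analytic subtleties arise beyond the elementary fact that a convergent sequence is bounded.
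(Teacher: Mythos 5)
Your proposal is correct and uses essentially the same mechanism as the paper: multiply the moment--cumulant (resp.\ M\"obius inversion) formula by $N$ and observe that a partition with $r\geq 2$ blocks contributes $N^{1-r}\prod_V(N\cdot(\,\cdot\,))\to 0$, so only $\pi=1_{m+n}$ survives. The only difference is organizational: since each hypothesis is assumed for \emph{all} $(m,n)$ simultaneously, the paper dispenses with your strong induction on $m+n$ and argues directly that every lower-degree factor is $O(1/N)$, which makes the two implications immediate and symmetric.
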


\begin{proof}
By the moment-cumulant formulas, we have
$$\lim_{N \rightarrow \infty}N\cdot\varphi_N(a_N^mb_N^n) = \lim_{N \rightarrow \infty}N\cdot\sum_{\pi \in \mathrm{NC}(m + n)}\kappa_\pi^N(a_N, b_N)$$
and
$$\lim_{N \rightarrow \infty}N\cdot\kappa_{m, n}^N(a_N, b_N) = \lim_{N \rightarrow \infty}N\cdot\sum_{\pi \in \mathrm{NC}(m + n)}(\varphi_N)_\pi(a_N, b_N)\mu(\pi, 1_{m + n}).$$
If the first statement (resp. second statement) is true, then the only non-vanishing term on the right-hand side of the
second equation (resp. first equation) above corresponds to $\pi = 1_{m + n}$, and the assertion follows.
\end{proof}

\begin{proof}[Proof of $\mathbf{Theorem\;\ref{BFlimthm}}$]
Suppose first that assertion $(1)$ holds. Since we will not be using the bi-free cumulants of $(\mathcal{A}, \varphi)$ until the end of the proof, we let $\kappa^\chi$ denote the bi-free cumulants of $(\mathcal{A}_N, \varphi_N)$ for now. For $m, n \geq 0$ such that $m + n \geq 1$, we have
\begin{align*}
\varphi(a^mb^n) &= \lim_{N \rightarrow \infty}\varphi_N\left(\left(\sum_{k = 1}^Na_{N ; k}\right)^m\left(\sum_{k = 1}^N
b_{N ; k}\right)^n\right)\\
&= \lim_{N \rightarrow \infty}\sum_{\substack{r(1), \dots ,r(m)\\s(1), \dots, s(n) = 1}}^N\varphi_N(a_{N ; r(1)}\cdots
a_{N ; r(m)}b_{N ; s(1)}\cdots b_{N ; s(n)})\\
&= \lim_{N \rightarrow \infty}\sum_{\substack{r(1), \dots, r(m)\\s(1), \dots, s(n) = 1}}^N\sum_{\tau \in \mathrm{NC}_\chi(m + n)}\kappa_\tau^\chi(a_{N ; r(1)}, \dots, a_{N ; r(m)}, b_{N ; s(1)}, \dots, b_{N ; s(n)})\\
&= \lim_{N \rightarrow \infty}\sum_{\tau \in \mathrm{NC}_\chi(m + n)}N^{|\tau|}\cdot\kappa_\tau^\chi(a_{N ; k}, b_{N ; k}),
\end{align*}
where the last expression, which is independent of $k$, follows from the fact that mixed bi-free cumulants vanish. It remains to show that the limits
$$\lim_{N \rightarrow \infty}N^{|\tau|}\cdot\kappa_\tau^\chi(a_{N ; k}, b_{N ; k})$$
exist for all $\tau \in \mathrm{NC}_\chi(m + n)$, then the special case $\tau = 1_{m + n}$ would give us the existence of
$$\lim_{N \rightarrow \infty}N\cdot\kappa_{m + n}^\chi(a_{N ; k}, b_{N ; k}),$$
which is equal to $\lim_{N \rightarrow\infty}N\cdot\kappa_{m, n}^N(a_{N ; k}, b_{N ; k})$ by Lemma \ref{FreeBifreeK}, and the existence of $\lim_{N \rightarrow \infty}N\cdot\varphi_N(a_{N ; k}^mb_{N ; k}^n)$ would follow from Lemma \ref{MCLemma}. We proceed by induction on $m$ and $n$. If $m = 1$ and $n = 0$, then
$$\lim_{N \rightarrow \infty}N\cdot\kappa_1^N(a_{N ; k}) = \lim_{N \rightarrow \infty}N\cdot\varphi_N(a_{N ; k})
=\lim_{N \rightarrow \infty}\varphi_N(a_{N ; 1} + \cdots + a_{N ; N}) = \varphi(a)$$
exists. The case $m = 0$ and $n = 1$ is similar. If $m = n = 1$, then we have
$$\varphi(ab) = \lim_{N \rightarrow \infty}\big(N^2\cdot\kappa_1^N(a_{N ; k})\kappa_1^N(b_{N ; k}) + N\cdot\kappa_{1, 1}^N(a_{N ; k}, b_{N ; k})\big).$$
Since $\varphi(ab)$, $\lim_{N \rightarrow \infty}N\cdot\kappa_1^N(a_{N ; k})$, and $\lim_{N \rightarrow \infty}N\cdot\kappa_1^N(b_{N ; k})$ all exist, we obtain the existence of $\lim_{N \rightarrow \infty}N\cdot\kappa_{1, 1}^N(a_{N ; k}, b_{N ; k})$. For the inductive step, assume the assertion is true for all $m \leq r$ and
$n \leq s$ such that $r + s \geq 1$. If $m = r + 1$ and $n = s$, then we have
\begin{align*}
\varphi(a^{r + 1}b^s) &= \lim_{N \rightarrow \infty}\sum_{\tau \in \mathrm{NC}_\chi(r + s + 1)}N^{|\tau|}\cdot\kappa_\tau^\chi(a_{N ; k}, b_{N ; k})\\
&= \lim_{N \rightarrow \infty}\left(N\cdot\kappa_{r + 1, s}^N(a_{N ; k}, b_{N ; k}) + L\right),
\end{align*}
where
$$L = \displaystyle\sum_{\substack{\tau \in \mathrm{NC}_\chi(r + s + 1)\\\tau \neq 1_{r + s + 1}}}N^{|\tau|}\cdot\kappa_\tau^\chi(a_{N ; k}, b_{N ; k}).$$
By the induction hypothesis, the limits
$$\lim_{N \rightarrow \infty}N^{|\tau|}\cdot\kappa_\tau^\chi(a_{N ; k}, b_{N ; k})$$
exist for all $\tau \in \mathrm{NC}_\chi(r + s + 1)$ with $\tau \neq 1_{r + s + 1}$, thus $\lim_{N \rightarrow \infty}L$ exists. Since $\varphi(a^{r + 1}b^s)$ also exists by assumption, we obtain the existence of $\lim_{N \rightarrow \infty}N\cdot\kappa_{r + 1, s}^N(a_{N ; k}, b_{N ; k})$ as required. The case $m = r$ and $n = s + 1$ is similar.

Conversely, suppose that assertion $(2)$ holds. For $m, n \geq 0$ such that $m + n \geq 1$, we have
$$\lim_{N \rightarrow \infty}\varphi_N\left(\left(\sum_{k = 1}^Na_{N ; k}\right)^m\left(\sum_{k = 1}^Nb_{N ; k}\right)^n\right) = \lim_{N \rightarrow \infty}\sum_{\tau \in \mathrm{NC}_\chi(m + n)}N^{|\tau|}\cdot\kappa_\tau^\chi(a_{N ; k}, b_{N ; k}).$$
By Lemmas \ref{FreeBifreeK} and \ref{MCLemma}, we have
$$\lim_{N \rightarrow \infty}N^{|\tau|}\cdot\kappa_\tau^\chi(a_{N ; k} , b_{N ; k}) = \lim_{N \rightarrow \infty}N^{|\tau|}\cdot(\varphi_N)_\tau(a_{N ; k}, b_{N ; k})$$
for all $\tau \in \mathrm{NC}_\chi(m + n)$, thus they all exist by assumption. Construct a commuting two-faced pair $(a, b)$ in a non-commutative probability space $(\mathcal{A}, \varphi)$ such that
$$\varphi(a^mb^n) = \lim_{N \rightarrow \infty}\varphi_N\left(\left(\sum_{k = 1}^Na_{N ; k}\right)^m
\left(\sum_{k=1}^Nb_{N;k}\right)^n\right).$$
Such an object always exits. For example, we can realize $a$ and $b$ as $s$ and $t$ in $\mathbb{C}[s, t]$, respectively, and define $\varphi(s^mt^n)$ to be the corresponding limits. Finally, let $\kappa^\chi$ denote the bi-free cumulants of $(\mathcal{A}, \varphi)$, and change the notations for the free and bi-free cumulants of $(\mathcal{A}_N, \varphi_N)$ to $c^N$ and $c^\chi$, respectively. Then we have
\begin{align*}
\varphi(a^mb^n) &= \sum_{\tau \in \mathrm{NC}_\chi(m + n)}\kappa_\tau^\chi(a, b)\\
&= \lim_{N \rightarrow \infty}\sum_{\tau \in \mathrm{NC}_\chi(m + n)}N^{|\tau|}\cdot c_\tau^\chi(a_{N ; k}, b_{N ; k})\\
&= \sum_{\tau \in \mathrm{NC}_\chi(m + n)}\lim_{N \rightarrow
\infty}N^{|\tau|}\cdot c_\tau^\chi(a_{N ; k}, b_{N ; k}).
\end{align*}
By induction on the number of arguments in the bi-free cumulants, we have
$$\kappa_\tau^\chi(a, b) = \lim_{N \rightarrow \infty}N^{|\tau|}\cdot c_\tau^\chi(a_{N ; k}, b_{N ; k})$$
for all $\tau \in \mathrm{NC}_\chi(m + n)$. In particular, when $\tau = 1_{m + n}$, we have
\begin{align*}
\kappa_{m + n}^\chi(a, b) &= \lim_{N \rightarrow \infty}N\cdot c_{m + n}^\chi(a_{N ; k}, b_{N ; k})\\
&= \lim_{N \rightarrow \infty}N\cdot c_{m, n}^N(a_{N ; k}, b_{N ; k})\\
&= \lim_{N \rightarrow \infty}N\cdot\varphi_N(a_{N ; k}^mb_{N ; k}^n)
\end{align*}
by Lemmas \ref{FreeBifreeK} and \ref{MCLemma} again. This completes the proof.
\end{proof}

\subsection{Operators on full Fock spaces}

Recall that the full Fock space over a Hilbert space $\mathcal{H}$ is defined as
$$\mathcal{F}(\mathcal{H}) = \mathbb{C}\Omega \oplus \bigoplus_{n \geq 1}\mathcal{H}^{\otimes n},$$
where $\Omega$ is a distinguished vector of norm one, called the vacuum vector. This gives us a $C^*$-probability space $(B(\mathcal{F}(\mathcal{H})), \tau_\mathcal{H})$, where $\tau_\mathcal{H}$, called the vacuum expectation state, is defined by $\tau_\mathcal{H}(T) = \langle T\Omega, \Omega\rangle$ for $T \in B(\mathcal{F}(\mathcal{H}))$. For our purposes, we are mainly interested in the creation, annihilation, and gauge operators on $\mathcal{F}(\mathcal{H})$, defined as follows.

\begin{defn}
Let $f \in \mathcal{H}$ and $T \in B(\mathcal{H})$.
\begin{enumerate}[$(1)$]
\item The \textit{left creation operator} given by the vector $f$, denoted $\ell(f) \in B(\mathcal{F}(\mathcal{H}))$, is determined by the formulas $\ell(f)(\Omega) = f$ and
$$\ell(f)(\xi_1 \otimes \cdots \otimes \xi_n) = f \otimes \xi_1 \otimes \cdots \otimes \xi_n$$
for all $n \geq 1$ and all $\xi_1, \dots, \xi_n \in \mathcal{H}$. The adjoint $\ell(f)^*$ of $\ell(f)$ is called the \textit{left annihilation operator} given by the vector $f$.

\item The \textit{right creation operator} given by the vector $f$, denoted $r(f) \in B(\mathcal{F}(\mathcal{H}))$, is determined by the formulas $r(f)(\Omega) = f$ and
$$r(f)(\xi_1 \otimes \cdots \otimes \xi_n) = \xi_1 \otimes \cdots \otimes \xi_n \otimes f$$
for all $n \geq 1$ and all $\xi_1, \dots, \xi_n \in \mathcal{H}$. The adjoint $r(f)^*$ of $r(f)$ is called the \textit{right annihilation operator} given by the vector $f$.

\item The \textit{left gauge operator} associated to $T$, denoted $\Lambda_\ell(T) \in B(\mathcal{F}(\mathcal{H}))$, is determined by the formulas $\Lambda_\ell(T)(\Omega) = 0$ and $$\Lambda_\ell(T)(\xi_1 \otimes \cdots \otimes \xi_n) = (T\xi_1) \otimes \xi_2 \otimes \cdots \otimes \xi_n$$
for all $n \geq 1$ and all $\xi_1, \dots, \xi_n \in \mathcal{H}$.

\item The \textit{right gauge operator} associated to $T$, denoted $\Lambda_r(T) \in B(\mathcal{F}(\mathcal{H}))$, is determined by the formulas $\Lambda_r(T)(\Omega) = 0$ and
$$\Lambda_r(T)(\xi_1 \otimes \cdots \otimes \xi_n) = \xi_1 \otimes\cdots \otimes \xi_{n - 1} \otimes (T\xi_n)$$
for all $n \geq 1$ and all $\xi_1, \dots, \xi_n \in \mathcal{H}$.
\end{enumerate}
\end{defn}

\begin{rmk}\label{orthogonal}
Let $\mathcal{H} = \bigoplus_{i \in I}\mathcal{H}_i$, where each $\mathcal{H}_i$ is a Hilbert space. For $i \in I$, let $\mathcal{B}_i$ and $\mathcal{C}_i$ be the $C^*$-algebras generated
by
$$\{\ell(f) : f\in\mathcal{H}_i\} \cup \{\Lambda_\ell(T) : T\mathcal{H}_i \subset \mathcal{H}_i\;\mathrm{and}\;T|_{\mathcal{H} \ominus \mathcal{H}_i} = 0\}$$
and
$$\{r(f) : f \in \mathcal{H}_i\} \cup \{\Lambda_r(T) : T\mathcal{H}_i \subset \mathcal{H}_i\;\mathrm{and}\;T|_{\mathcal{H} \ominus \mathcal{H}_i}=0\},$$
respectively. Then $\{(\mathcal{B}_i, \mathcal{C}_i)\}_{i \in I}$ is a bi-free family in $(B(\mathcal{F(H)}), \tau_\mathcal{H})$ (see \cite[Section 6]{V14}).
\end{rmk}

\begin{prop}\label{commuteprop}
Let $\mathcal{H}$ be a Hilbert space. For any $f, g \in \mathcal{H}$, $T_1 = T_1^*, T_2 = T_2^* \in B(\mathcal{H})$, and $\lambda_1, \lambda_2 \in \mathbb{R}$, the self-adjoint operators
$$a = \ell(f) + \ell(f)^* + \Lambda_\ell(T_1) + \lambda_1\cdot 1\;\;\;\;\;\text{and}\;\;\;\;\;b = r(g) + r(g)^* + \Lambda_r(T_2) + \lambda_2\cdot 1$$
commute if and only if $\Im\langle f, g\rangle = 0$, $T_1g = T_2f$, and $[T_1, T_2] = 0$. Moreover, if $a$ and $b$ commute, then the distribution $\mu_{(a, b)}$ of $(a, b)$ is $\boxplus\boxplus$-infinitely divisible, i.e. for every $n \in \mathbb{N}$, $\mu_{(a, b)}$ can be written as an $n$-fold additive bi-free convolution of some planar probability measure.
\end{prop}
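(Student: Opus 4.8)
The plan is to treat the two assertions separately. For the commutation criterion, I would first observe that the scalar summands $\lambda_1\cdot 1$ and $\lambda_2\cdot 1$ are central, so $[a,b]=0$ is equivalent to $[a_0,b_0]=0$, where $a_0=\ell(f)+\ell(f)^*+\Lambda_\ell(T_1)$ and $b_0=r(g)+r(g)^*+\Lambda_r(T_2)$. Expanding bilinearly, $[a_0,b_0]$ is a sum of nine commutators, each pairing a left-type operator with a right-type operator. The structural point is that all nine annihilate $\mathcal H^{\otimes m}$ for $m\ge 2$: a left creation/annihilation/gauge acts on the first tensor leg while a right one acts on the last, and on a tensor of length at least two these actions are independent. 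Hence $[a_0,b_0]=0$ if and only if it vanishes on $\mathbb C\Omega\oplus\mathcal H$, and it suffices to test it there.

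I would then evaluate the two surviving strata. Using $\ell(f)\Omega=f$, $r(g)\Omega=g$, and that the annihilation and gauge operators kill $\Omega$, a short computation gives $[a_0,b_0]\Omega=(\langle g,f\rangle-\langle f,g\rangle)\Omega+(T_1g-T_2f)$, whose $\mathbb C\Omega$- and $\mathcal H$-components force $\Im\langle f,g\rangle=0$ and $T_1g=T_2f$. On an arbitrary single vector $\xi$ only the gauge-annihilation, annihilation-gauge, and gauge-gauge cross terms survive, yielding $[a_0,b_0]\xi=\langle\xi,\,T_2f-T_1g\rangle\Omega+[T_1,T_2]\xi$; the first part again gives $T_1g=T_2f$ and the second forces $[T_1,T_2]=0$. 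Conversely, these three identities make both strata vanish while the higher strata vanish automatically, so the criterion holds.

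For infinite divisibility, fix $n\in\mathbb N$ and, inside a larger Hilbert space, take mutually orthogonal copies $\mathcal H_1,\dots,\mathcal H_n$ of $\mathcal H$, with $f^{(j)},g^{(j)}\in\mathcal H_j$ the images of $f,g$ and $T_1^{(j)},T_2^{(j)}$ the copies of $T_1,T_2$ acting on $\mathcal H_j$ and vanishing on its orthogonal complement. I would set
$$a_j=\ell\!\left(\tfrac{f^{(j)}}{\sqrt n}\right)+\ell\!\left(\tfrac{f^{(j)}}{\sqrt n}\right)^*+\Lambda_\ell(T_1^{(j)})+\tfrac{\lambda_1}{n},\qquad b_j=r\!\left(\tfrac{g^{(j)}}{\sqrt n}\right)+r\!\left(\tfrac{g^{(j)}}{\sqrt n}\right)^*+\Lambda_r(T_2^{(j)})+\tfrac{\lambda_2}{n}.$$
Because the defining identities are homogeneous and pass to the copies, the criterion just proved shows each $(a_j,b_j)$ is a commuting pair; using orthogonality of the $\mathcal H_j$ and the support of the $T_i^{(j)}$, the same criterion gives $[a_j,b_k]=0$ for $j\ne k$, so $(\sum_ja_j,\sum_jb_j)$ is again a commuting pair with a compactly supported planar distribution. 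By Remark \ref{orthogonal} the pairs $\{(a_j,b_j)\}_{j=1}^n$ are bi-free, and they are manifestly identically distributed, so this distribution is $\mu_{(a_1,b_1)}^{\boxplus\boxplus n}$.

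It remains to identify $\mu_{(a_1,b_1)}^{\boxplus\boxplus n}$ with $\mu_{(a,b)}$, which is where the real work lies. By Lemma \ref{FreeBifreeK} the bi-free cumulants of a commuting self-adjoint pair equal the corresponding free cumulants, so I would expand $\kappa_{p,q}(a,b)$ multilinearly into free cumulants of the summands $\ell(f),\ell(f)^*,\Lambda_\ell(T_1),r(g),r(g)^*,\Lambda_r(T_2),\lambda_i\cdot 1$. Free cumulants of order at least two with an identity entry vanish, and the standard Fock-space computation shows that the surviving free cumulants of creation/annihilation/gauge operators are exactly those with one creation and one annihilation threaded by gauges; each contributes a single factor of $f$ or $g$ from the creation and one from the annihilation, with $T_1,T_2$ untouched. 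Thus $\kappa_{p,q}(a,b)$ is homogeneous of degree two in $(f,g)$ when $p+q\ge 2$, while $\kappa_{1,0}=\lambda_1$ and $\kappa_{0,1}=\lambda_2$. The rescaling $f\mapsto f/\sqrt n$, $g\mapsto g/\sqrt n$, $\lambda_i\mapsto\lambda_i/n$ therefore multiplies every $\kappa_{p,q}$ by $1/n$, giving $\kappa_{p,q}(a_j,b_j)=\tfrac1n\kappa_{p,q}(a,b)$; additivity of bi-free cumulants then yields $\kappa_{p,q}(\sum_ja_j,\sum_jb_j)=\kappa_{p,q}(a,b)$ for all $p+q\ge 1$. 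Since the cumulants determine the moments, hence the planar distribution, we conclude $\mu_{(a,b)}=\mu_{(a_1,b_1)}^{\boxplus\boxplus n}$. The principal obstacle is exactly this degree-two homogeneity---verifying that the gauge operators contribute no extra powers of $f$ or $g$, so that the rescaling acts multiplicatively on all cumulants simultaneously; by comparison the commutator computation in the first part is routine bookkeeping.
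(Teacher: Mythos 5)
Your commutator analysis is correct and is, in substance, the paper's own argument: the paper groups the nine cross terms into four operators $A,B,C,D$ supported on $\mathbb{C}\Omega$ or $\mathcal{H}$, while you organize the same computation by Fock strata, observing that every cross commutator kills $\mathcal{H}^{\otimes m}$ for $m\ge 2$ and then reading off $\Im\langle f,g\rangle=0$, $T_1g=T_2f$, and $[T_1,T_2]=0$ from the $\mathbb{C}\Omega$ and $\mathcal{H}$ levels. The values you compute on those two strata agree with the paper's \eqref{commute2}--\eqref{commute5}, so this half is fine.

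For infinite divisibility you build exactly the paper's decomposition (your $\sum_j a_j$ is literally $\ell\bigl(\sum_j f^{(j)}/\sqrt n\bigr)+\ell\bigl(\sum_j f^{(j)}/\sqrt n\bigr)^*+\Lambda_\ell\bigl(\bigoplus_j T_1^{(j)}\bigr)+\lambda_1$, the paper's $\tilde a$), but you then identify the distribution of the sum with $\mu_{(a,b)}$ the hard way, and the step you yourself flag as ``where the real work lies'' is left as an appeal to a ``standard Fock-space computation.'' That computation is standard only for one-sided operators (\cite[Proposition 13.5]{NS06} treats words in $\ell(f),\ell(f)^*,\Lambda_\ell(T)$); what you actually need is the vanishing/one-creation-one-annihilation structure for \emph{mixed} left--right words, i.e.\ the content of Proposition \ref{vanishkappa}, which the paper proves only \emph{after} this proposition and by means of the limit theorem (Theorem \ref{BFlimthm}) applied to the very construction at hand. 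So as written there is a gap: the degree-two homogeneity of $\kappa_{p,q}$ in $(f,g)$ for $p+q\ge 2$ is true but not established by anything you cite. The gap is easily closed, and in a way that makes the cumulant argument unnecessary: the map $h\mapsto\sum_j h^{(j)}/\sqrt n$ is an isometry of $\mathcal{H}$ into $\bigoplus_j\mathcal{H}_j$ that intertwines $T_i$ with $\bigoplus_j T_i^{(j)}$, so every vacuum expectation of a word in the creation, annihilation, and gauge operators defining $(\sum_j a_j,\sum_j b_j)$ equals the corresponding one for $(a,b)$; hence the two pairs have the same joint distribution outright. This is the paper's one-line identification, and substituting it for your cumulant-scaling step repairs the argument.
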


\begin{proof}
Without loss of generality, we may assume $\lambda_1 = \lambda_2 = 0$. First, we consider the commutativity between $a$ and $b$. Since $\ell(f)r(g)\Omega = r(g)\ell(f)\Omega$ and $\ell(f)r(g)(\xi_1 \otimes \cdots \otimes \xi_n) = r(g)\ell(f)(\xi_1 \otimes \cdots \otimes \xi_n)$ for all $\xi_1, \dots, \xi_n \in \mathcal{H}$ with $n \geq 1$, it follows that
\begin{equation}\label{commute1}
\ell(f)r(g) = r(g)\ell(f)\;\;\;\;\;\text{and}\;\;\;\;\;\ell(f)^*r(g)^* = r(g)^*\ell(f)^*.
\end{equation}
Note that we also have $\ell(f)r(g)^*\Omega = 0$, $r(g)^*\ell(f)\Omega = \langle f, g\rangle\Omega$, and $\ell(f)r(g)^* = r(g)^*\ell(f)$ on $\mathcal{F(H)} \ominus \mathbb{C}\Omega$. On the other hand, we have $r(g)\ell(f)^*\Omega = 0$, $\ell(f)^*r(g)\Omega = \langle g, f\rangle\Omega$, and $\ell(f)^*r(g) = r(g)\ell(f)^*$ on $\mathcal{F(H)} \ominus \mathbb{C}\Omega$, which imply that the operator
$$A = \ell(f)r(g)^* + \ell(f)^*r(g) - r(g)^*\ell(f) - r(g)\ell(f)^*$$
vanishes on $\mathcal{F(H)} \ominus \mathbb{C}\Omega$ and satisfies
\begin{equation}\label{commute2}
A\Omega = -i2(\Im\langle f,g\rangle)\Omega.
\end{equation}
Similarly, the conditions $\ell(f)\Lambda_r(T_2)\Omega = 0$, $\Lambda_r(T_2)\ell(f)\Omega = T_2f$, and $\ell(f)\Lambda_r(T_2) = \Lambda_r(T_2)\ell(f)$ on $\mathcal{F(H)} \ominus \mathbb{C}\Omega$
combining with the conditions $r(g)\Lambda_\ell(T_1)\Omega = 0$, $\Lambda_\ell(T_1)r(g)\Omega = T_1g$, and $r(g)\Lambda_\ell(T_1) = \Lambda_\ell(T_1)r(g)$ on $\mathcal{F(H)} \ominus \mathbb{C}\Omega$ imply that the operator
$$B = \ell(f)\Lambda_r(T_2) + \Lambda_\ell(T_1)r(g) - \Lambda_r(T_2)\ell(f) - r(g)\Lambda_\ell(T_1)$$
vanishes on $\mathcal{F(H)} \ominus \mathbb{C}\Omega$ and satisfies
\begin{equation}\label{commute3}
B\Omega = T_1g - T_2f.
\end{equation}
One can also see from the identities $\ell(f)^*\Lambda_r(T_2)\xi = \langle T_2\xi, f\rangle$, $\Lambda_r(T_2)\ell(f)^*\xi = 0$ for $\xi \in \mathcal{H}$, and $\ell(f)^*\Lambda_r(T_2) = \Lambda_r(T_2)\ell(f)^*$ on $\mathcal{F(H)} \ominus \mathcal{H}$, and the identities $\Lambda_\ell(T_1)r(g)^*\xi = 0$, $r(g)^*\Lambda_\ell(T_1)\xi = \langle T_1\xi, g\rangle$, and $\Lambda_\ell(T_1)r(g)^* = r(g)^*\Lambda_\ell(T_1)$ on $\mathcal{F(H)} \ominus \mathcal{H}$ that the operator
$$C = \ell(f)^*\Lambda_r(T_2) + \Lambda_\ell(T_1)r(g)^* - \Lambda_r(T_2)\ell(f)^* - r(g)^*\Lambda_\ell(T_1)$$
vanishes on $\mathcal{F(H)} \ominus \mathcal{H}$ and satisfies
\begin{equation}\label{commute4}
C\xi = \langle\xi, (T_2f - T_1g)\rangle,\;\;\;\;\;\xi \in \mathcal{H}.
\end{equation}
Finally, the conditions that $\Lambda_\ell(T_1)\Lambda_r(T_2)\xi = T_1T_2\xi$, $\Lambda_r(T_2)\Lambda_\ell(T_1)\xi = T_2T_1\xi$ for $\xi \in \mathcal{H}$, and $\Lambda_\ell(T_1)\Lambda_r(T_2) = \Lambda_r(T_2)\Lambda_\ell(T_1)$ on $\mathcal{F(H)} \ominus \mathcal{H}$ imply that the operator
$$D = \Lambda_\ell(T_1)\Lambda_r(T_2) - \Lambda_r(T_2)\Lambda_\ell(T_1)$$
vanishes on $\mathcal{F(H)} \ominus \mathcal{H}$ and satisfies
\begin{equation}\label{commute5}
D\xi = (T_1T_2 - T_2T_1)\xi,\;\;\;\;\;\xi \in \mathcal{H}.
\end{equation}
The desired result now follows from the fact that $ab - ba = A + B + C + D$ and the established identities \eqref{commute1}-\eqref{commute5}.

For the second assertion, fix $n \in \mathbb{N}$ and let
$$\mathcal{H}_n = \underbrace{\mathcal{H} \oplus \cdots \oplus \mathcal{H}}_{n\;\text{times}}.$$
Furthermore, let
$$\tilde{a} = \ell\left(\dfrac{f \oplus \cdots \oplus f}{\sqrt{n}}\right) + \ell\left(\dfrac{f \oplus \cdots \oplus f}
{\sqrt{n}}\right)^* + \Lambda_\ell(T_1 \oplus \cdots \oplus T_1)$$
and
$$\tilde{b} = r\left(\dfrac{g \oplus \cdots \oplus g}{\sqrt{n}}\right) + r\left(\dfrac{g \oplus \cdots \oplus g}
{\sqrt{n}}\right)^* + \Lambda_r(T_2 \oplus \cdots \oplus T_2).$$
Then $[\tilde{a}, \tilde{b}] = 0$ and the distribution of $(a, b)$ in $(B(\mathcal{F}(\mathcal{H})), \tau_\mathcal{H})$ is same as the distribution of $(\tilde{a}, \tilde{b})$ in $(B(\mathcal{F}(\mathcal{H}_n)), \tau_{\mathcal{H}_n})$. Note that $\tilde{a}$ and $\tilde{b}$ can be written as
$$\tilde{a} = \left[\ell\left(\dfrac{f \oplus 0 \oplus \cdots \oplus 0}{\sqrt{n}}\right) + \ell\left(\dfrac{f \oplus 0 \oplus \cdots \oplus 0}{\sqrt{n}}\right)^* + \Lambda_\ell(T_1 \oplus 0 \oplus \cdots \oplus 0)\right] +$$
$$\cdots + \left[\ell\left(\dfrac{0 \oplus \cdots \oplus 0 \oplus f}{\sqrt{n}}\right) + \ell\left(\dfrac{0 \oplus \cdots \oplus 0 \oplus f}{\sqrt{n}}\right)^* + \Lambda_\ell(0 \oplus \cdots \oplus 0 \oplus T_1)\right]$$
and
$$\tilde{b} = \left[r\left(\dfrac{g \oplus 0 \oplus \cdots \oplus 0}{\sqrt{n}}\right) + r\left(\dfrac{g \oplus 0 \oplus \cdots \oplus 0}{\sqrt{n}}\right)^* + \Lambda_r(T_2 \oplus 0 \oplus \cdots \oplus 0)\right] +$$
$$\cdots + \left[r\left(\dfrac{0 \oplus \cdots \oplus 0 \oplus g}{\sqrt{n}}\right) + r\left(\dfrac{0 \oplus \cdots \oplus 0 \oplus g}{\sqrt{n}}\right)^* + \Lambda_r(0 \oplus \cdots \oplus 0 \oplus T_2)\right].$$
Denote the $n$ summands of $\tilde{a}$ (respectively, of $\tilde{b}$) in the above summations by $\tilde{a}_1, \dots, \tilde{a}_n$ (respectively, $\tilde{b}_1, \dots, \tilde{b}_n$). Then $\{(\tilde{a}_k, \tilde{b}_k)\}_{1 \leq k \leq n}$ are bi-free and identically distributed by Remark \ref{orthogonal}. Moreover, for every $1 \leq k \leq n$, $\tilde{a}_k$ and $\tilde{b}_k$ are commuting self-adjoint random variables, thus the distribution of $(\tilde{a}_k, \tilde{b}_k)$ is a Borel probability measure on $\mathbb{R}^2$. This shows that the distribution of $(a, b)$ is the $n$-fold additive bi-free convolution of the distribution of $(\tilde{a}_k,\tilde{b}_k)$. Since $n \in \mathbb{N}$ was arbitrary, the second assertion follows.
\end{proof}

\begin{prop}\label{vanishkappa}
Following the same notations as in \emph{Proposition \ref{commuteprop}}, if $[a, b] = 0$, then the bi-free cumulants of
$(a, b)$ are given as follows:
$$\kappa_{1, 0}(a, b) = \lambda_1,\;\;\;\;\;\kappa_{0, 1}(a, b) = \lambda_2,$$
$$\kappa_{m, 0}(a, b) = \kappa_m(\ell(f)^*, \underbrace{\Lambda_\ell(T_1), \dots, \Lambda_\ell(T_1)}_{m - 2\;\mathrm{times}}, \ell(f)) = \langle T_1^{m - 2}f, f\rangle,\;\;\;\;\;m \geq 2,$$
$$\kappa_{0, n}(a, b) = \kappa_n(r(g)^*, \underbrace{\Lambda_r(T_2), \dots, \Lambda_r(T_2)}_{n - 2\;\mathrm{times}}, r(g)) = \langle T_2^{n - 2}g, g\rangle,\;\;\;\;\;n \geq 2,$$
and
$$\kappa_{m, n}(a, b) = \langle\Lambda_\ell(T_1)^{m - 1}\ell(f)\Omega, \Lambda_r(T_2)^{n - 1}r(g)\Omega\rangle =
\langle T_1^{m - 1}f, T_2^{n - 1}g\rangle$$
for $m, n \geq 1$.
\end{prop}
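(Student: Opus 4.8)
The plan is to push the whole computation down to the combinatorics of free cumulants of creation, annihilation, and gauge operators on $\mathcal{F}(\mathcal{H})$ and their action on the vacuum vector $\Omega$. Since $a=a^*$, $b=b^*$ and $[a,b]=0$, Lemma \ref{FreeBifreeK} lets me replace the bi-free cumulant by the free cumulant, $\kappa_{m,n}(a,b)=\kappa_{m+n}(\underbrace{a,\dots,a}_{m},\underbrace{b,\dots,b}_{n})$, computed in $(B(\mathcal{F}(\mathcal{H})),\tau_\mathcal{H})$. When $m+n=1$ the cumulant is just $\tau_\mathcal{H}(a)$ or $\tau_\mathcal{H}(b)$; as $\tau_\mathcal{H}$ kills every creation, annihilation, and gauge operator, only the scalar part survives and I get $\kappa_{1,0}(a,b)=\lambda_1$, $\kappa_{0,1}(a,b)=\lambda_2$. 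For $m+n\ge 2$ I expand each $a$ and each $b$ into their four summands by multilinearity of the free cumulant; because a free cumulant of order $\ge 2$ vanishes once one of its arguments is a scalar multiple of the identity, the terms $\lambda_1\cdot 1$ and $\lambda_2\cdot 1$ drop out, leaving a sum of cumulants whose arguments are the atoms $\ell(f),\ell(f)^*,\Lambda_\ell(T_1)$ in the first $m$ slots and $r(g),r(g)^*,\Lambda_r(T_2)$ in the last $n$ slots.

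The engine of the proof is a vanishing statement for these atoms: a free cumulant $\kappa_N(d_1,\dots,d_N)$ in which every $d_i$ is a (left or right) creation, annihilation, or gauge operator is zero unless $d_1$ is an annihilation, $d_N$ is a creation, and $d_2,\dots,d_{N-1}$ are all gauge operators; in the surviving \emph{staircase} case its value is the vacuum inner product obtained by letting the product $d_2\cdots d_N$ act on $\Omega$ and pairing the result against the vector defining $d_1$. I would prove this by expanding the moment $\tau_\mathcal{H}(d_1\cdots d_N)$ as a sum over non-crossing partitions whose blocks are individual staircases — this comes from tracking how the product moves through the Fock space (left/right creations push a factor onto the front/back, the matching annihilations pop it, gauges act in place) — and then M\"obius-inverting against the moment-cumulant formula. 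The step that needs genuine care, and which I expect to be the main obstacle, is that for $N\ge 4$ the moment is not the full cumulant: non-crossing partitions such as $\{1,2\}\{3,4\}$ already have all blocks of size $\ge 2$, so the cancellation of every non-staircase contribution (including the front/back bookkeeping in the mixed left–right case) must actually be verified rather than asserted.

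Granting this lemma, the three cases follow because the $m$ left slots precede the $n$ right slots, so the staircase constraint singles out exactly one term of each expansion. For $\kappa_{m,0}$ with $m\ge 2$ the survivor is $\kappa_m(\ell(f)^*,\Lambda_\ell(T_1),\dots,\Lambda_\ell(T_1),\ell(f))$, with value $\langle T_1^{m-2}f,f\rangle$; for $\kappa_{0,n}$ with $n\ge 2$ it is the mirror-image term, with value $\langle T_2^{n-2}g,g\rangle$; and for $\kappa_{m,n}$ with $m,n\ge 1$ the unique survivor is
\[
\kappa_{m+n}\big(\ell(f)^*,\underbrace{\Lambda_\ell(T_1),\dots,\Lambda_\ell(T_1)}_{m-1},\underbrace{\Lambda_r(T_2),\dots,\Lambda_r(T_2)}_{n-1},r(g)\big),
\]
having annihilation in the first slot, creation in the last, and the left gauges followed by the right gauges in between. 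Letting this operator act on $\Omega$ (only a single tensor factor is ever present, so all gauges act on it in turn) yields the vacuum value $\langle T_1^{m-1}T_2^{n-1}g,f\rangle$.

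It remains to match this with the asserted $\langle T_1^{m-1}f,T_2^{n-1}g\rangle$. Self-adjointness of $T_1$ gives $\langle T_1^{m-1}T_2^{n-1}g,f\rangle=\overline{\langle T_1^{m-1}f,T_2^{n-1}g\rangle}$, so it is enough to check that $\langle T_1^{m-1}f,T_2^{n-1}g\rangle$ is real — and this is exactly where $[a,b]=0$ enters, through the relations $\Im\langle f,g\rangle=0$, $T_1g=T_2f$, and $[T_1,T_2]=0$ furnished by Proposition \ref{commuteprop}. Using $[T_1,T_2]=0$ and self-adjointness to move all operators onto one argument, and then a single application of $T_1g=T_2f$ (or, in the base case $m=n=1$, the relation $\Im\langle f,g\rangle=0$), rewrites the inner product in the manifestly real form $\langle Sf,f\rangle$ or $\langle Sg,g\rangle$ with $S$ a product of commuting self-adjoint operators. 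Hence the mixed cumulant equals $\langle T_1^{m-1}f,T_2^{n-1}g\rangle$, and the Fock-space description $\langle\Lambda_\ell(T_1)^{m-1}\ell(f)\Omega,\Lambda_r(T_2)^{n-1}r(g)\Omega\rangle$ is immediate since $\Lambda_\ell(T_1)^{m-1}\ell(f)\Omega=T_1^{m-1}f$ and $\Lambda_r(T_2)^{n-1}r(g)\Omega=T_2^{n-1}g$.
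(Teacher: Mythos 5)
Your architecture is right — reduce to free cumulants via Lemma \ref{FreeBifreeK}, expand multilinearly, discard the scalar summands, identify the unique ``staircase'' survivor, and then use the relations $\Im\langle f,g\rangle=0$, $T_1g=T_2f$, $[T_1,T_2]=0$ to convert $\langle T_1^{m-1}T_2^{n-1}g,f\rangle$ into $\langle T_1^{m-1}f,T_2^{n-1}g\rangle$; that endgame is exactly what the paper does (and your case analysis showing the inner product is real is correct). But the proof as written has a genuine gap, and it is the one you flag yourself: the vanishing lemma for free cumulants $\kappa_N(d_1,\dots,d_N)$ of mixed left/right creation, annihilation, and gauge atoms is stated but not proved. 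For the pure-left and pure-right cases this is \cite[Proposition 13.5]{NS06}, which the paper simply cites; for the mixed case, however, the Möbius-inversion cancellation you propose (showing that every non-staircase non-crossing partition contribution cancels against the moment) is real work that is nowhere carried out, and a proposal whose ``main obstacle'' is left as an expectation is not yet a proof.

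The paper closes this gap by a different device that dissolves the cancellation problem entirely. The amplification in the proof of Proposition \ref{commuteprop} shows that the six atoms have the same joint distribution as sums of $N$ bi-free, identically distributed copies in which each creation and annihilation operator carries a factor $1/\sqrt{N}$ while the gauge operators carry none. Theorem \ref{BFlimthm} then converts each atomic cumulant into $\lim_{N\to\infty} N\cdot\langle \tilde c_1\cdots\tilde c_m\tilde d_1\cdots\tilde d_n\Omega,\Omega\rangle$, i.e.\ a \emph{single vacuum moment} scaled by $N^{1-k/2}$, where $k$ is the number of creation/annihilation arguments. Terms with $k\geq 3$ die from the scaling; terms with $k\leq 1$ have vanishing moment because the word cannot return to $\mathbb{C}\Omega$; and for $k=2$ a nonzero moment forces annihilation first, creation last, gauges in between. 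No Möbius inversion is needed. If you want to keep your combinatorial route you must actually prove the mixed-atom vanishing lemma (or reduce it to the left-only case by a trick); otherwise, adopting the limit-theorem argument is the efficient way to finish.
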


\begin{proof}
The equalities concerning $\kappa_1(a)$, $\kappa_1(b)$, $\kappa_m(a)$, and $\kappa_n(b)$ are known results in (one-variable) free probability theory (see \cite[Proposition 13.5]{NS06}). For the equality concerning $\kappa_{m, n}(a, b)$, we use the same setup (with $\mathcal{H}_N$ replacing $\mathcal{H}_n$ to avoid confusion with the subscript $n$ in $\kappa_{m, n}(a, b)$) as in the proof of Proposition $\ref{commuteprop}$. Observe that the random variables
$$\ell(f),\;\ell(f)^*,\;r(g),\;r(g)^*,\;\Lambda_\ell(T_1),\;\Lambda_r(T_2)$$
in $(B(\mathcal{F}(\mathcal{H})), \tau_\mathcal{H})$ have the same joint distribution as the random variables
$$\ell\left(\dfrac{f \oplus \cdots \oplus f}{\sqrt{N}}\right),\;\ell\left(\dfrac{f \oplus \cdots \oplus f}{\sqrt{N}}\right)^*,\;r\left(\dfrac{g \oplus \cdots \oplus g}{\sqrt{N}}\right),\;r\left(\dfrac{g \oplus \cdots \oplus g}{\sqrt{N}}\right)^*,$$
$$\Lambda_\ell(T_1 \oplus \cdots \oplus T_1),\;\Lambda_r(T_2 \oplus \cdots \oplus T_2)$$
in $(B(\mathcal{F}(\mathcal{H}_N)), \tau_{\mathcal{H}_N})$. Moreover, the latter random variables are the sums of $N$ random variables, where the summands have the same joint distribution as the random variables
$$\dfrac{1}{\sqrt{N}}\ell(f),\;\dfrac{1}{\sqrt{N}}\ell(f)^*,\;\dfrac{1}{\sqrt{N}}r(g),\;\dfrac{1}{\sqrt{N}}r(g)^*,\;\Lambda_\ell(T_1),\;\Lambda_r(T_2)$$
in $(B(\mathcal{F}(\mathcal{H})), \tau_\mathcal{H})$. Expanding $\kappa_{m, n}(a, b)$ using the multilinearity of the cumulants, we see that each summand is of the form $\kappa_{m + n}(c_1, \dots, c_m, d_1, \dots, d_n)$, where
$$c_i \in \{\ell(f), \ell(f)^*,\Lambda_\ell(T_1)\}\;\;\;\;\;\mathrm{and}\;\;\;\;\;d_j\in \{r(g), r(g)^*, \Lambda_r(T_2)\}.$$
By Theorem \ref{BFlimthm}, we have
$$\kappa_{m + n}(c_1, \dots, c_m, d_1, \dots, d_n) = \lim_{N \rightarrow \infty}N\cdot\langle\tilde{c}_1\cdots
\tilde{c}_m\tilde{d}_1\cdots\tilde{d}_n\Omega, \Omega\rangle$$
where
$$\tilde{c}_i \in \left\{\dfrac{\ell(f)}{\sqrt{N}}, \dfrac{\ell(f)^*}{\sqrt{N}}, \Lambda_\ell(T_1)\right\}\;\;\;\;\;\mathrm{and}\;\;\;\;\;\tilde{d}_j \in \left\{\dfrac{r(g)}{\sqrt{N}}, \dfrac{r(g)^*}{\sqrt{N}}, \Lambda_r(T_2)\right\}$$
in correspondence to $c_i$ and $d_j$. Since $m, n \geq 1$, and by the definitions of how creation, annihilation, and gauge operators act on full Fock spaces, the only non-vanishing summand of $\kappa_{m, n}(a, b)$ is
$$\kappa_{m + n}(\ell(f)^*, \underbrace{\Lambda_\ell(T_1), \dots, \Lambda_\ell(T_1)}_{m - 1\;\text{times}}, \underbrace{\Lambda_r(T_2), \dots, \Lambda_r(T_2)}_{n - 1\;\text{times}}, r(g)),$$
which is equal to
$$\lim_{N \rightarrow \infty}N\cdot\left\langle\dfrac{1}{\sqrt{N}}\ell(f)^*\Lambda_\ell(T_1)^{m - 1}\Lambda_r(T_2)^{n - 1}\dfrac{1}{\sqrt{N}}r(g)\Omega, \Omega\right\rangle = \langle T_1^{m - 1}f, T_2^{n - 1}g\rangle$$
by the hypothesis that $a$ and $b$ commute.
\end{proof}

\subsection{Bi-free L\'{e}vy-Hin\v{c}in representations}

The next lemma establishes the relations between the bi-free $\mathcal{R}$-transforms of a two-faced pair of commuting self-adjoint random variables in some $C^*$-probability space and the corresponding free $\mathcal{R}$-transforms.

\begin{lem}\label{project}
For a two-faced pair $(a,b)$ of commuting self-adjoint random variables in a $C^*$-probability space $(\mathcal{A}, \varphi)$, we have
$$\mathcal{R}_{(a, b)}(z, 0) = z\mathcal{R}_a(z)\;\;\;\;\;\mathrm{and}\;\;\;\;\;\mathcal{R}_{(a, b)}(0, w) = w\mathcal{R}_b(w)$$
for $z$ and $w$ in a small neighbourhood of $0$.
\end{lem}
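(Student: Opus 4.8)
The plan is to read off both identities directly from the power-series definition of the bi-free $\mathcal{R}$-transform. Recall that
$$\mathcal{R}_{(a, b)}(z, w) = \sum_{\substack{m, n \geq 0\\m + n \geq 1}}\kappa_{m, n}(a, b)z^mw^n.$$
First I would specialize to $w = 0$. Since $w^n = 0$ for every $n \geq 1$, the only surviving terms are those with $n = 0$, giving
$$\mathcal{R}_{(a, b)}(z, 0) = \sum_{m \geq 1}\kappa_{m, 0}(a, b)z^m.$$

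The crux of the argument is then to identify the coefficients. By the convention $\kappa_{m, n}(a, b) = \kappa_{m + n}(\underbrace{a, \dots, a}_{m}, \underbrace{b, \dots, b}_{n})$ fixed before Lemma \ref{FreeBifreeK}, the case $n = 0$ reduces to $\kappa_{m, 0}(a, b) = \kappa_m(\underbrace{a, \dots, a}_{m})$, which is precisely the ordinary one-variable free cumulant $\kappa_m(a)$ of $a$. Substituting this identification yields $\mathcal{R}_{(a, b)}(z, 0) = \sum_{m \geq 1}\kappa_m(a)z^m$.

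It remains to recognize the right-hand side. From the definition $\mathcal{R}_a(z) = \sum_{k \geq 0}\kappa_{k + 1}(a)z^k$, multiplying by $z$ and reindexing $m = k + 1$ gives $z\mathcal{R}_a(z) = \sum_{m \geq 1}\kappa_m(a)z^m$, which is exactly the series obtained above; this proves the first identity. The second identity $\mathcal{R}_{(a, b)}(0, w) = w\mathcal{R}_b(w)$ follows by the same steps with the roles of $a$ and $b$ (and of $z$ and $w$) interchanged, using $\kappa_{0, n}(a, b) = \kappa_n(b)$.

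I do not expect a genuine obstacle here, since the statement is a formal consequence of matching Taylor coefficients. The only point that deserves a word of care is analyticity: both sides are germs of holomorphic functions near $0$, so once all coefficients agree the two germs coincide on a common neighbourhood of $0$, which is all that the conclusion requires.
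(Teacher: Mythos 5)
Your proof is correct, but it takes a genuinely different route from the paper's. You read both identities straight off the defining power series: setting $w=0$ kills every term with $n\geq 1$, the surviving coefficients $\kappa_{m,0}(a,b)$ are literally the one-variable free cumulants $\kappa_m(a)$, and the resulting series is $z\mathcal{R}_a(z)$ by reindexing. This is legitimate because, for a pair in a $C^*$-probability space, the cumulants satisfy $|\kappa_{m,n}(a,b)|\leq C^{m+n}$ (moments are bounded by $\|a\|^m\|b\|^n$ and the cumulants are finite signed sums of products of moments over non-crossing partitions), so the defining series converges absolutely on a polydisc and evaluation at $w=0$ is permitted. The paper instead argues analytically from Theorem \ref{bifreeR}: it writes $\mathcal{R}_{(a,b)}(z,w)=1+z\mathcal{R}_a(z)+w\mathcal{R}_b(w)-zw/G_{(a,b)}(K_a(z),K_b(w))$ and shows that $\lim_{w\to 0}zw/G_{(a,b)}(K_a(z),K_b(w))=1$ for small $z\neq 0$, so that the $w$-dependent terms cancel in the limit. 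Your approach is more elementary and is essentially a tautology given the paper's series definition of the partial bi-free $\mathcal{R}$-transform; the paper's approach has the merit of deriving the lemma directly from the Green's-function representation that is actually used later (e.g.\ in the proof of Theorem \ref{IDCS}), and in doing so provides a consistency check between the functional-equation form of $\mathcal{R}_{(a,b)}$ and its Taylor coefficients without invoking convergence of the cumulant series. Both arguments are complete and prove the same statement.
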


\begin{proof}
By the representation of $\mathcal{R}_{(a, b)}(z, w)$ given in Theorem \ref{bifreeR}, it suffices to show that
$$\lim_{w \rightarrow 0}\dfrac{zw}{G_{(a, b)}(K_a(z), K_b(w))} = 1$$
for small $z \neq 0$. For $(z, w)$ in a deleted neighbourhood of $(0, 0)$, we have
$$G_{(a, b)}(z^{-1}, w^{-1}) = w\varphi((z^{-1} - a)^{-1}(1 - wb)^{-1}),$$
thus
$$\dfrac{zw}{G_{(a, b)}(K_a(z), K_b(w))} = \dfrac{z(w\mathcal{R}_b(w) + 1)}{\varphi((K_a(z) - a)^{-1}(1 - b/K_b(w))^{-1})}.$$
Since $\lim_{w \rightarrow 0}1/K_b(w) = 0$ and $\lim_{w \rightarrow 0}w\mathcal{R}_b(w) = 0$, it follows that
$$\lim_{w \rightarrow 0}\dfrac{zw}{G_{(a, b)}(K_a(z), K_b(w))} = \dfrac{z}{\varphi((K_a(z) - a)^{-1})} = \dfrac{z}{G_a(K_a(z))} = 1$$
as desired. The second equality can be shown in a similar way.
\end{proof}

Recall from \cite[Theorem 8.6]{BV92} that a compactly supported probability measure $\nu$ on $\mathbb{R}$ is $\boxplus$-infinitely divisible if and only if its $\mathcal{R}$-transform is of the form
\begin{equation}\label{LHformulaR}
\mathcal{R}_\nu(z) = \kappa_1^\nu + \int_\mathbb{R}\dfrac{z}{1 - zs}\;d\rho_\nu(s),
\end{equation}
where $\rho_\nu$, called the free L\'{e}vy measure of $\nu$, is a finite positive Borel measure on $\mathbb{R}$. The integral representation \eqref{LHformulaR} is usually referred to as the free L\'{e}vy-Hin\v{c}in representation for $\nu$.

Denote by $\mathbb{C}_0[s, t]$ the algebra of polynomials in $\mathbb{C}[s, t]$ with vanishing constant term, i.e. $p(0, 0) = 0$ if $p \in \mathbb{C}_0[s, t]$. As indicated in \eqref{sesquilinear}, given a real $2$-sequence $R = \{R_{m, n}\}_{(m, n) \in \mathbb{Z}_+^2 \backslash \{(0, 0)\}}$, one can equip $\mathbb{C}_0[s, t]$ with a sesquilinear form $[\cdot, \cdot]_R$ defined by
\begin{equation}\label{sesquilinear2}
[s^{m_1}t^{n_1}, s^{m_2}t^{n_2}]_R = R_{m_1 + m_2, n_1 + n_2}
\end{equation}
for $(m_1, n_1), (m_2, n_2) \neq (0,0)$.

\begin{defn}\label{conditiondef}
A real $2$-sequence $R = \{R_{m, n}\}_{(m, n)\in\mathbb{Z}_+^2 \backslash \{(0, 0)\}}$ is said to be \textit{conditionally positive semi-definite} if
$$[p, p]_R \geq 0$$
for all $p \in \mathbb{C}_0[s,t]$, where $[\cdot, \cdot]_R$ is the sesquilinear form defined above. The $2$-sequence $R$ is said to be \textit{conditionally bounded} if there exists a finite number $L > 0$ such that
$$|[s^mt^np, p]_R| \leq L^{m+n}\cdot[p,p]_R$$
for all $p \in \mathbb{C}_0[s, t]$ and $m, n \geq 0$.
\end{defn}

Note that conditional positive semi-definiteness and conditional boundedness of $R$ do not depend on the values of $R_{1, 0}$ and $R_{0, 1}$ as they do not appear in any summand of $[p, p]_R$ and $[s^mt^np, p]_R$ for any $p \in \mathbb{C}_0[s, t]$. Moreover, note that conditional positive semi-definiteness of $R$ together with the assumption $R_{2, 0}=0$ (resp. $R_{0, 2} = 0$) yields that $R_{m, n} = 0$ if $m + n \geq 2$ and $m \geq 1$ (resp. $R_{m, n} = 0$ if $m + n \geq 2$ and $n \geq 1$) by Cauchy-Schwarz inequality.

We are now ready to characterize $\boxplus\boxplus$-infinitely divisible distributions and provide their bi-free L\'{e}vy-Hin\v{c}in representations.

\begin{thm}\label{IDCS}
Let $\mu$ be a compactly supported planar probability measure and let $\kappa = \{\kappa_{m, n}^\mu\}_{(m, n) \in \mathbb{Z}_+^2 \backslash \{(0, 0)\}}$ be the bi-free cumulants of $\mu$ such that $\kappa_{2, 0}^\mu, \kappa_{0, 2}^\mu > 0$. Then the following statements are equivalent.
\begin{enumerate}[$(1)$]
\item The measure $\mu$ is $\boxplus\boxplus$-infinitely divisible.

\item The $2$-sequence $\kappa$ is conditionally positive semi-definite and conditionally bounded in the sense defined in \emph{Definition \ref{conditiondef}}.

\item There exist finite positive Borel measures $\rho_1$ and $\rho_2$ with compact supports on $\mathbb{R}^2$ and
a finite Borel measure $\rho$ on $\mathbb{R}^2$ satisfying the relations
\begin{equation}\label{atomineq}
|\rho(\{(0, 0)\})|^2 \leq \rho_1(\{(0, 0)\})\rho_2(\{(0, 0)\}),
\end{equation}
\begin{equation}\label{relation}
td\rho_1(s, t) = sd\rho(s, t),\;\;\;\;\;\text{and}\;\;\;\;\;sd\rho_2(s, t) = td\rho(s, t)
\end{equation} such that
\begin{equation}\label{LHformula}
\mathcal{R}_\mu(z, w) = z\mathcal{R}_1(z) + w\mathcal{R}_2(w) + \int_{\mathbb{R}^2}\dfrac{zw}{(1 - zs)(1 - wt)}\;d\rho(s, t)
\end{equation}
holds for $(z, w)$ in a neighbourhood of $(0, 0)$, where
$$\mathcal{R}_1(z) = \kappa_{1, 0}^\mu + \int_{\mathbb{R}^2}\dfrac{z}{1 - zs}\;d\rho_1(s,t)\;\;\;\;\;\text{and}
\;\;\;\;\;\mathcal{R}_2(w) = \kappa_{0, 1}^\mu + \int_{\mathbb{R}^2}\dfrac{w}{1 - wt}\;d\rho_2(s, t).$$
\end{enumerate}
If assertions \textnormal{(1)-(3)} hold, then $\mathcal{R}_\mu$ extends analytically to $(\mathbb{C} \backslash \mathbb{R})^2$ via the formula \eqref{LHformula}, and the functions $\mathcal{R}_1$ and $\mathcal{R}_2$ in assertion $(3)$ are the free $\mathcal{R}$-transforms of $\boxplus$-infinitely divisible distributions.
\end{thm}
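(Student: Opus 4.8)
The plan is to establish the cycle of implications $(1)\Rightarrow(2)\Rightarrow(3)\Rightarrow(1)$ and then verify the concluding structural assertions. For $(1)\Rightarrow(2)$, I would use $\boxplus\boxplus$-infinite divisibility to write, for each $n$, $\mu = \mu_n^{\boxplus\boxplus n}$; since the bi-free $\mathcal{R}$-transform linearizes $\boxplus\boxplus$, the cumulants of $\mu_n$ are $\kappa_{m,n}^\mu/n$. Passing to the limit $n\to\infty$ should realize $\kappa$ as the limiting array of a triangular array satisfying the hypotheses of Theorem \ref{BFlimthm}, so that $\kappa_{m+n}^\chi(a,b) = \lim_N N\cdot\varphi_N(a_{N;k}^m b_{N;k}^n)$ for a commuting pair $(a,b)$. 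The key point is that these limits arise as $N$ times moments of compactly supported planar probability measures $\rho_N = \mu_{(a_{N;k},b_{N;k})}$; conditional positive semi-definiteness of $\kappa$ should then follow from positivity of $[p,p]_{\rho_N}$ for $p\in\mathbb{C}_0[s,t]$ (the vanishing constant term kills the $(0,0)$ entry where the scaling differs), and conditional boundedness from the uniform compactness of the supports together with Theorem \ref{moment1}(2).

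For $(2)\Rightarrow(3)$, the idea is to manufacture the three measures from $\kappa$ via the moment problem. Conditional positive semi-definiteness and conditional boundedness of $\kappa$ say exactly that the truncated $2$-sequence $\{\kappa_{m,n}^\mu\}_{m+n\geq 2}$ satisfies the hypotheses of Theorem \ref{moment1} on $\mathbb{C}_0[s,t]$, which by a standard GNS-type construction produces a finite positive Borel measure on $\mathbb{R}^2$ with compact support. I would define $\rho$ through its moments by $\int s^m t^n\,d\rho = \kappa_{m+1,n+1}^\mu$ for $m,n\geq 0$, and similarly $\rho_1$ via $\int s^{m}t^{n}\,d\rho_1$ matching $\kappa_{m+2,0}^\mu$ along the $s$-axis and $\kappa_{m+1,n+1}^\mu$ off it, with $\rho_2$ symmetric; the relations \eqref{relation} are then forced at the level of moments, and using $\kappa_{2,0}^\mu,\kappa_{0,2}^\mu>0$ together with Cauchy--Schwarz gives the atom inequality \eqref{atomineq}. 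Substituting the geometric-series expansions $\frac{z}{1-zs} = \sum_{m\geq 1} z^m s^{m-1}$ and $\frac{zw}{(1-zs)(1-wt)} = \sum_{m,n\geq 1} z^m w^n s^{m-1}t^{n-1}$ into \eqref{LHformula} and matching coefficients recovers every $\kappa_{m,n}^\mu$, which verifies the formula as an identity of germs near $(0,0)$.

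For $(3)\Rightarrow(1)$, I would read \eqref{LHformula} through Theorem \ref{bifreeR} and Lemma \ref{project}: the one-variable pieces $\mathcal{R}_1,\mathcal{R}_2$ are exactly free L\'evy--Hin\v{c}in representations of the form \eqref{LHformulaR}, hence are $\mathcal{R}$-transforms of $\boxplus$-infinitely divisible measures, so dividing $\mathcal{R}_\mu$ by $n$ produces another $\mathcal{R}$-transform of the same shape (the measures $\rho,\rho_1,\rho_2$ simply scale by $1/n$, preserving \eqref{atomineq} and \eqref{relation}). The remaining obstacle is to show that each such scaled $\mathcal{R}$-transform actually corresponds to a genuine planar probability measure, i.e. to a commuting self-adjoint pair; here I would invoke the Fock-space model of Proposition \ref{commuteprop}, choosing $f,g,T_1,T_2$ whose cumulants computed in Proposition \ref{vanishkappa} reproduce the scaled array, with the commutation conditions $\Im\langle f,g\rangle=0$, $T_1 g=T_2 f$, $[T_1,T_2]=0$ being precisely the translation of \eqref{relation} and \eqref{atomineq}. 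The final claims then drop out: analytic extension of $\mathcal{R}_\mu$ to $(\mathbb{C}\setminus\mathbb{R})^2$ follows because each integral in \eqref{LHformula} converges off the real axis by compact support of the measures, and $\mathcal{R}_1,\mathcal{R}_2$ are free L\'evy--Hin\v{c}in transforms by construction. The main difficulty I anticipate is the $(2)\Rightarrow(3)$ step, specifically disentangling the diagonal terms $\kappa_{m,0}^\mu,\kappa_{0,n}^\mu$ (which feed $\rho_1,\rho_2$) from the mixed terms (which feed $\rho$) while simultaneously enforcing the compatibility \eqref{relation} and the atom constraint \eqref{atomineq}, since the single array $\kappa$ must be split consistently among three measures.
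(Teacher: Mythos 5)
Your overall skeleton is reasonable and your $(1)\Rightarrow(2)$ step matches the paper's in spirit, but there are genuine gaps in the remaining implications, and they trace back to a structural choice: the paper proves $(2)\Rightarrow(1)$ directly by a GNS/Fock-space construction, and that construction is precisely the tool it then uses to prove $(2)\Rightarrow(3)$. By routing the cycle as $(1)\Rightarrow(2)\Rightarrow(3)\Rightarrow(1)$ you have discarded the one piece of machinery that makes the hard steps go through.

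The most serious gap is the construction of $\rho$ in $(2)\Rightarrow(3)$. Your plan to ``define $\rho$ through its moments by $\int s^mt^n\,d\rho=\kappa_{m+1,n+1}^\mu$'' does not follow from Theorem \ref{moment1}: that theorem produces \emph{positive} measures from positive semi-definite $2$-sequences, whereas the shifted array $\{\kappa_{m+1,n+1}^\mu\}$ is of the form $[s^{m+1},t^{n+1}]_\kappa$, a ``mixed'' pairing with no positivity, and $\rho$ is only a signed measure. There is no general existence (let alone uniqueness) theorem for a compactly supported signed measure with prescribed moments, so $\rho$ cannot be manufactured this way. The paper instead first runs $(2)\Rightarrow(1)$: the conditional positivity and boundedness of $\kappa$ give a Hilbert space $\mathcal{H}=\overline{\mathbb{C}_0[s,t]/\mathcal{I}}$ with commuting bounded self-adjoint operators $T_1,T_2$ (multiplication by $s$ and $t$) and vectors $f=s+\mathcal{I}$, $g=t+\mathcal{I}$; the moment problem (applied to the genuinely positive shifted sequences $\kappa_{m+2,n}^\mu$ and $\kappa_{m,n+2}^\mu$) yields $\rho_1,\rho_2$, but $\rho$ is obtained as the joint spectral measure $d\rho(s,t)=d\langle E_1(s)E_2(t)f,g\rangle$, and the relations \eqref{relation} and the atom inequality \eqref{atomineq} come from comparing Cauchy transforms and from Cauchy--Schwarz applied to $\langle E_1(\{0\})f,E_2(\{0\})g\rangle$. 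You correctly flag that splitting $\kappa$ among three compatible measures is the crux, but the resolution requires the operator model, not the moment problem alone.

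Two further points. First, in $(3)\Rightarrow(1)$ you assert that the commutation conditions $\Im\langle f,g\rangle=0$, $T_1g=T_2f$, $[T_1,T_2]=0$ are ``precisely the translation of \eqref{relation} and \eqref{atomineq}'', but you give no construction of $\mathcal{H},f,g,T_1,T_2$ from the measures $\rho,\rho_1,\rho_2$; this would amount to building a positive operator-valued (matrix) measure out of the triple, which is exactly the content that needs proving. The paper avoids this by proving $(3)\Rightarrow(2)$ instead, via the truncated measures $d\sigma_\epsilon=\chi_{\{|s|,|t|>\epsilon\}}(st)^{-1}d\rho$ and the identity $[p,p]_\kappa=m(c_1,c_2)+\lim_{\epsilon\to0^+}\int|p|^2\,d\sigma_\epsilon$ with $m(c_1,c_2)\geq0$ forced by \eqref{atomineq}; this is where the atom inequality is actually used, and your outline has no substitute for it. Second, in $(1)\Rightarrow(2)$ you invoke ``uniform compactness of the supports'' of the convolution roots $\mu_N$ without justification; this is not automatic and the paper derives it by observing that the marginals of $\mu$ are $\boxplus$-infinitely divisible with $\mu_{a_N}^{\boxplus N}=\mu_a$, applying \cite[Lemma 8.5]{BV92} to bound $\mathrm{supp}(\mu_{a_N})$, and then a Chebyshev-type estimate to confine $\mathrm{supp}(\mu_{(a_N,b_N)})$ to $[-L,L]^2$.
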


\begin{proof}
Suppose first that assertion $(1)$ holds, i.e. for every $N \in \mathbb{N}$, there are commuting self-adjoint random variables $a_N$ and $b_N$ in some $C^*$-probability space $(\mathcal{A}_N, \varphi_N)$ with distribution $\mu_N$ such that $\underbrace{\mu_N \boxplus\boxplus \cdots \boxplus\boxplus \mu_N}_{N\;\mathrm{times}} = \mu$. Then, by Theorem \ref{BFlimthm}, the bi-free cumulants of $\mu$ are given by
$$\kappa_{m, n}^\mu = \lim_{N \rightarrow \infty}N\cdot\varphi_N(a_N^mb_N^n),\;\;\;\;\;(m, n) \in \mathbb{Z}_+^2 \backslash \{(0, 0)\}.$$
Observe that for any polynomial $p = p(s, t) = \sum_{j = 1}^\ell c_js^{m_j}t^{n_j} \in \mathbb{C}_0[s, t]$, we have
\begin{align*}
[p, p]_{\kappa} &= \sum_{j, k = 1}^\ell c_j\overline{c}_k\kappa_{m_j + m_k, n_j + n_k}^\mu\\
&= \sum_{j, k = 1}^\ell c_j\overline{c}_k\lim_{N \rightarrow \infty}N\cdot\varphi_N(a_N^{m_j + m_k}b_N^{n_j + n_k})\\
&= \lim_{N\to\infty}N\cdot\varphi_N\left(\sum_{j, k = 1}^\ell c_j\overline{c}_ka_N^{m_j + m_k}b_N^{n_j + n_k}\right)\\
&= \lim_{N \rightarrow \infty}N\cdot\varphi_N(p(a_N,b_N)p(a_N,b_N)^*) \geq 0,
\end{align*}
which yields the conditional positive semi-definiteness of $\kappa$. On the other hand, viewing $\mu$ as the distribution of a two-faced pair $(a, b)$ of commuting self-adjoint random variables in some $C^*$-probability space and using Lemma \ref{project}, we have for $N \in \mathbb{N}$ and for $z$ in a small deleted neighbourhood of $0$,
\begin{equation}\label{freebifreeR}
z\mathcal{R}_a(z) = \mathcal{R}_{(a, b)}(z, 0) = N\cdot\mathcal{R}_{(a_N, b_N)}(z, 0) = N\cdot z\mathcal{R}_{a_N}(z) = z\mathcal{R}_{\mu_{a_N}^{\boxplus N}}(z),
\end{equation}
where $\mu_{a_N}^{\boxplus N} = \underbrace{\mu_{a_N} \boxplus \cdots \boxplus\mu_{a_N}}_{N\;\mathrm{times}}$. This shows that the distribution of $a$ is $\boxplus$-infinitely divisible and $\mu_{a_N}^{\boxplus N} = \mu_a$. We thus conclude from \cite[Lemma 8.5]{BV92} the existence of a finite number $L > 0$ such that $\mathrm{supp}(\mu_{a_N}) \subset [-L, L]$ for all $N$. Similarly, the distribution of $b$ is $\boxplus$-infinitely
divisible, $\mu_{b_N}^{\boxplus N} = \mu_b$, and $\mathrm{supp}(\mu_{b_N}) \subset [-L, L]$ for all $N$ if $L$ is
chosen large enough. This implies that the supports of the distributions $(\mu_{(a_N, b_N)})_{N \in \mathbb{N}}$ are uniformly bounded. Indeed, for all $c > L$ and $m, n \in \mathbb{N}$, we have
\begin{align*}
c^{2m}\mu_{(a_N, b_N)}(\{(s, t) : |s| \geq c\}) & \leq \int_{\mathbb{R}^2}s^{2m}\;d\mu_{(a_N, b_N)}(s, t)\\
&= \varphi_N(a_N^{2m}) = \int_\mathbb{R}s^{2m}d\mu_{a_N}(s) \leq L^{2m},
\end{align*}
and similarly $\mu_{(a_N, b_N)}(\{(s, t) : |t| \geq c\}) \leq (L/c)^{2n}$.  This allows us to conclude that $\mathrm{supp}(\mu_{(a_N, b_N)}) \subset [-L, L]^2$ by observing that
 $\{(s, t) : |s|\;\text{or}\;|t| \geq c\}$ is a $\mu_{(a_N, b_N)}$-measure zero set by letting $m, n \rightarrow \infty$. Therefore, for $p \in \mathbb{C}_0[s, t]$ as before, we have
\begin{align*}
|[s^mt^np, p]_{\kappa}| &= \left|\sum_{j, k = 1}^\ell c_j\overline{c}_k\kappa_{m_j + m_k + m, n_j + n_k + n}^\mu\right|\\
&= \left|\lim_{N \rightarrow \infty}N\cdot\varphi_N(a_N^mb_N^np(a_N, b_N)p(a_N, b_N)^*)\right|\\
&\leq \lim_{N \rightarrow \infty}N\cdot\int_{\mathbb{R}^2}|s|^m|t|^n|p(s, t)|^2\;d\mu_N(s, t)\\
&\leq L^{m + n}\cdot\lim_{N \rightarrow \infty}N\cdot\int_{\mathbb{R}^2}|p(s, t)|^2\;d\mu_N(s, t) = L^{m + n}\cdot[p, p]_\kappa
\end{align*}
as desired. Hence, assertion (2) holds.

Next, we show the implication $(2) \Rightarrow (1)$. Observe that the conditional positive semi-definiteness of the bi-free cumulants $\kappa$ of $\mu$ shows that the sesquilinear form $[\cdot, \cdot]_{\kappa}$ on $\mathbb{C}_0[s, t]$ induced by $\kappa$ is non-negative. Moreover, the conditional boundedness of $\kappa$ indicates that $\mathcal{I} = \{p_0 \in \mathbb{C}_0[s, t] : [p_0,p_0]_{\kappa} = 0\}$ is an ideal of $\mathbb{C}_0[s, t]$ as
$$0 \leq [s^mt^np_0, s^mt^np_0]_{\kappa} \leq L^{2(m + n)}\cdot[p_0, p_0]_{\kappa} = 0$$
for all $p_0 \in \mathcal{I}$. Let $\mathcal{H}_0$ be the quotient vector space $\mathbb{C}_0[s, t] / \mathcal{I}$. If $h = p + \mathcal{I}$ and $k = q + \mathcal{I}$ are in $\mathcal{H}_0$, then
\begin{equation}\label{innerproduct}
\langle h, k\rangle_0 := [p, q]_{\kappa}
\end{equation}
can be verified to be a well-defined inner product on $\mathcal{H}_0$ because $\mathcal{I}$ is an ideal. Let $\mathcal{H}$ be the Hilbert space obtained by completing $\mathcal{H}_0$ with respect to the norm $\|\cdot\|$ defined by the inner product \eqref{innerproduct}. Consider now the linear transformations $T_1, T_2 : \mathcal{H}_0 \rightarrow \mathcal{H}$ defined by
$$T_1h = sp(s, t) + \mathcal{I}\;\;\;\;\;\text{and}\;\;\;\;\;T_2h = tp(s, t) + \mathcal{I}$$
for $h = p + \mathcal{I} \in \mathcal{H}_0$. Note that both $T_1$ and $T_2$ are well-defined since $\mathcal{I}$ is an ideal. Moreover, the inequality
$$\|T_1h\|^2 = \langle T_1h, T_1h\rangle_0 = [sp, sp]_{\kappa} \leq L^2\cdot\|h\|^2$$
shows that $T_1$ can be extended to a bounded linear operator on $\mathcal{H}$. On the other hand, if $h = p + \mathcal{I}$ and $k = q + \mathcal{I}$, where $p = \sum_{j = 1}^\ell c_js^{m_j}t^{n_j}, q = \sum_{j = 1}^\ell d_js^{u_j}t^{v_j} \in C_0[s, t]$, then we have
\begin{align*}
\langle T_1^*h, k\rangle_0 = \langle h, T_1k\rangle_0 &= \sum_{j, k = 1}^\ell c_j\overline{d}_k\kappa_{m_j + (u_k + 1), n_j + v_k}^\mu\\
&= \sum_{j, k = 1}^\ell c_j\overline{d}_k\kappa_{(m_j + 1) + u_k, n_j + v_k}^\mu = \langle T_1h, k\rangle_0,
\end{align*}
from which we see that $T_1$ is self-adjoint. Similarly, one can show that $T_2$ extends to a self-adjoint operator in
$B(\mathcal{H})$. To finish the proof of $(2) \Rightarrow (1)$, let us consider the operators
$$a = \ell(f) + \ell(f)^* + \Lambda_\ell(T_1) + \kappa_{1, 0}^\mu\cdot 1\;\;\;\;\;\text{and}\;\;\;\;\;b = r(g) + r(g)^* + \Lambda_r(T_2) + \kappa_{0, 1}^\mu\cdot 1,$$
where $f = s + \mathcal{I}$ and $g = t + \mathcal{I}$, in the $C^*$-probability space $(B(\mathcal{F}(\mathcal{H})), \tau_\mathcal{H})$. Clearly, $\Im\langle f, g\rangle = \Im\kappa_{1, 1}^\mu = 0$, $T_1g = st + \mathcal{I} = T_2f$, and $T_1T_2 = T_2T_1$ as
$$T_2T_1h = tsp(s, t) + \mathcal{I} = T_1T_2h$$
for $h = p + \mathcal{I}$ in the dense subspace $\mathcal{H}_0$ of $\mathcal{H}$. Hence, by Proposition \ref{commuteprop}, we see that $(a, b)$ is a two-faced pair of commuting self-adjoint random variables in $(B(\mathcal{F}(\mathcal{H})), \tau_\mathcal{H})$ whose distribution $\mu_{(a, b)}$ is $\boxplus\boxplus$-infinitely
divisible. We claim that $\mu_{(a, b)} = \mu$ by checking that all the bi-free cumulants $\kappa_{m, n}(a, b)$ of $(a, b)$ agree with the corresponding bi-free cumulants $\kappa$ of $\mu$. The claim follows directly from Proposition \ref{vanishkappa}. Indeed, for all $m, n \geq 1$, we have
$$\kappa_{m, n}^{\mu_{(a, b)}} = \kappa_{m, n}(a, b) = \langle T_1^{m-1}f, T_2^{n-1}g\rangle_0 = [s^m, t^n]_{\kappa} = \kappa_{m, n}^\mu.$$
If $m \geq 2$ and $n = 0$, then we have
$$\kappa_{m, 0}^{\mu_{(a, b)}} = \kappa_m(a) = \langle T_1^{m-2}f, f\rangle_0 = [s^{m-1}, s]_{\kappa} = \kappa_{m, 0}^\mu,$$
and similarly we have $\kappa_{0, n}^{\mu_{(a, b)}} = \kappa_{0, n}^\mu$ for all $n \geq 2$. Clearly, we also have
$$\kappa_{1, 0}^{\mu_{(a, b)}} = \kappa_1(a) = \kappa_{1, 0}^\mu\;\;\;\;\;\text{and}\;\;\;\;\;\kappa_{0, 1}^{\mu_{(a, b)}} = \kappa_1(b) = \kappa_{0, 1}^\mu.$$
Thus the two distributions $\mu_{(a, b)}$ and $\mu$ coincide, and hence assertion $(1)$ holds.

In what follows, we show that assertions $(2)$ and $(3)$ are equivalent. Suppose first that assertion $(2)$ holds. Since the $2$-sequences $\{\theta_{m, n}^{(1)}\}_{(m, n) \in \mathbb{Z}_+^2}$ and $\{\theta_{m, n}^{(2)}\}_{(m, n) \in \mathbb{Z}_+^2}$ defined by $\theta_{m, n}^{(1)} = \kappa_{m + 2, n}^\mu$ and $\theta_{m, n}^{(2)} = \kappa_{m, n + 2}^\mu$ are positive semi-definite and bounded, and the numbers $\theta_{0, 0}^{(1)}$ and $\theta_{0,0}^{(2)}$ are both positive, it follows from Theorem \ref{moment1} that there exist two finite positive Borel measures $\rho_1$ and $\rho_2$ with compact supports on $\mathbb{R}^2$ such that
\begin{equation}\label{Rmn1}
\kappa_{m + 2, n}^\mu = \int_{\mathbb{R}^2}s^mt^n\;d\rho_1(s, t),\;\;\;\;\;m, n \geq 0
\end{equation}
and
\begin{equation}\label{Rmn2}
\kappa_{m, n + 2}^\mu = \int_{\mathbb{R}^2}s^mt^n\;d\rho_2(s, t),\;\;\;\;\;m, n \geq 0.
\end{equation}
Observe that for $|z|$ and $|w|$ small enough so that $\|zT_1\| < 1$ and $\|wT_2\| < 1$, following the notations introduced in the proof of $(2) \Rightarrow (1)$, we have
\begin{align*}
\sum_{m, n \geq 1}\kappa_{m, n}^\mu z^mw^n &= zw\sum_{m, n \geq 0}\langle(zT_1)^mf, (wT_2)^ng\rangle\\
&= zw\langle(1 - zT_1)^{-1}f, (1 - wT_2)^{-1}g\rangle,
\end{align*}
where $\langle\cdot, \cdot\rangle$ is the inner product on $\mathcal{H}$. Let the spectral resolutions of $T_1$ and $T_2$ be
$$T_1 = \int_\mathbb{R}s\;dE_1(s)\;\;\;\;\;\text{and}\;\;\;\;\;T_2=\int_\mathbb{R}t\;dE_2(t).$$
Since $E_1(s)$ commutes with $E_2(t)$ for all $s$ and $t$, it follows from the spectral theorem that
\begin{equation}\label{Rmn4}
\sum_{m, n \geq 1}\kappa_{m, n}^\mu z^mw^n = \int_{\mathbb{R}^2}\dfrac{zw}{(1 - zs)(1 - wt)}\;d\rho(s, t),
\end{equation}
where $\rho(s, t) = \langle E_1(s)E_2(t)f, g\rangle$ is a finite compactly supported Borel measure on $\mathbb{R}^2$. Similarly, using \eqref{Rmn1}, \eqref{Rmn2}), and the spectral resolutions of $T_1$ and $T_2$, one can obtain
\begin{equation} \label{relation1}
z^2\cdot\int_{\mathbb{R}^2}\frac{d\rho_1(s, t)}{1 - zs} = \sum_{m \geq 2}\kappa_{m, 0}^\mu z^m = z^2\cdot\int_{\mathbb{R}}\frac{d\langle E_1(s)f, f\rangle}{1 - zs}
\end{equation}
and
\begin{equation}\label{relation3}
w^2\cdot\int_{\mathbb{R}^2}\frac{d\rho_2(s, t)}{1 - wt} = \sum_{n \geq 2}\kappa_{0, n}^\mu w^n = w^2\cdot\int_{\mathbb{R}}\frac{d\langle E_2(t)g, g\rangle}{1 - wt}
\end{equation}
provided that $|z|$ and $|w|$ are sufficiently small. Note that we can rewrite \eqref{relation1} and \eqref{relation3} as
$$\int_{\mathbb{R}^2}\frac{d\rho_1(s, t)}{z - s} = \int_\mathbb{R}\frac{d\langle E_1(s)f, f\rangle}{z - s}\;\;\;\;\;\text{and}\;\;\;\;\;\int_{\mathbb{R}^2}\frac{d\rho_2(s, t)}{w - t} = \int_\mathbb{R}\frac{d\langle E_2(t)g, g\rangle}{w - t},$$
which hold for $z$ and $w$ in a neighbourhood of infinity, and then in the whole $\mathbb{C} \backslash \mathbb{R}$ by the uniqueness of the analytic extension. Since the Cauchy transforms determine the underlying measures uniquely, we obtain
\begin{equation}\label{relation2}
\|E_1(B)f\|^2 = \rho_1(B \times \mathbb{R})\;\;\;\;\;\text{and}\;\;\;\;\;\|E_2(B)g\|^2 = \rho_2(\mathbb{R} \times B)
\end{equation}
for every Borel set $B$ of $\mathbb{R}$. On the other hand, using \eqref{relation2} and applying Cauchy-Schwarz inequality to $\rho(\{(0, 0)\}) = \langle E_1(\{0\})f, E_2(\{0\})g\rangle$, we have
\begin{equation}\label{atom}
|\rho(\{(0, 0)\})|^2 \leq \rho_1(\{0\} \times \mathbb{R})\rho_2(\mathbb{R} \times \{0\}).
\end{equation}
Observe also that \eqref{Rmn4} shows
\begin{equation}\label{Rmn3}
\kappa_{m + 1, n + 1}^\mu = \int_{\mathbb{R}^2}s^mt^n\;d\rho(s,t),\;\;\;\;\;m, n \geq 0.
\end{equation}
Thus for all $m, n \geq 0$, we have
$$\int_{\mathbb{R}^2}(s^mt^n)t\;d\rho_1(s, t) = \kappa_{m + 2, n + 1}^\mu = \int_{\mathbb{R}^2}(s^mt^n)s\;d\rho(s, t)$$
and
$$\int_{\mathbb{R}^2}(s^mt^n)s\;d\rho_2(s, t) = \kappa_{m + 1, n + 2}^\mu = \int_{\mathbb{R}^2}(s^mt^n)t\;d\rho(s, t),$$
from which, along with the Stone-Weierstrass theorem, we obtain the relations \eqref{relation}. Notice that the first relation in \eqref{relation} implies that
\begin{align*}
\rho_1(\{0\} \times \mathbb{R}') &= \lim_{\epsilon \rightarrow 0^+}\int_{\mathbb{R}_2}\chi_{\{s = 0, |t| > \epsilon\}}(s, t)\;d\rho_1(s, t)\\
&= \lim_{\epsilon \rightarrow 0^+}\int_{\mathbb{R}_2}\frac{s}{t}\chi_{\{s = 0, |t| > \epsilon\}}(s, t)\;d\rho(s, t) = 0,
\end{align*}
where $\mathbb{R}' = \mathbb{R} \backslash \{0\}$. Similarly, making use of the second relation in \eqref{relation}, one can show that $\rho_2(\mathbb{R}' \times \{0\}) = 0$, thus the inequality \eqref{atomineq} follows from \eqref{atom}. Finally, for $|z|$ small enough, we have
$$\sum_{m \geq 2}\kappa_{m, 0}^\mu z^m = z^2\sum_{m \geq 0}\kappa_{m + 2, 0}^\mu z^m = z^2\sum_{m \geq 0}\int_{\mathbb{R}^2}(sz)^m\;d\rho_1(s, t) = \int_{\mathbb{R}^2}\dfrac{z^2}{1 - zs}\;d\rho_1(s, t),$$
and similarly for $|w|$ small enough, we have
$$\sum_{n \geq 2}\kappa_{0, n}^\mu w^n = \int_{\mathbb{R}^2}\dfrac{w^2}{1 - wt}\;d\rho_2(s, t).$$
Combining the above conclusions with the characterization of $\boxplus$-infinitely divisible distributions, we conclude that assertion $(3)$ holds.

Conversely, suppose that assertion $(3)$ holds. Then one can easily see that the bi-free cumulants of $\mu$ are given by the formulas \eqref{Rmn1}, \eqref{Rmn2}, and \eqref{Rmn3}. Also in the proof of $(2) \Rightarrow (3)$, we have seen that the relations in \eqref{relation} imply that $\rho_1(\{0\} \times \mathbb{R}') = \rho_2(\mathbb{R}' \times \{0\}) = 0$, from which we obtain $\rho([\mathbb{R}' \times \{0\}] \cup [\{0\} \times \mathbb{R}']) = 0$ by \eqref{relation} again. In the following, we shall argue that the bi-free cumulants of $\mu$ can be expressed as limits of certain
integrals with $\sigma_\epsilon$, which is defined as
$$d\sigma_\epsilon(s, t) = \frac{\chi_{\{|s|, |t| > \epsilon\}}(s, t)}{st}\;d\rho(s, t) := \frac{\chi_{\Omega_\epsilon}(s, t)}{st}\;d\rho(s, t)$$
for any $\epsilon > 0$, as the representing measures and use these expressions to conclude the desired result. Clearly, the planar measure $\sigma_\epsilon$ is a finite positive measure by the relations in \eqref{relation} and the boundedness of the support of $\rho$. Since $\chi_{\Omega_\epsilon} \rightarrow 1$ a.e. on $\mathbb{R}^2 \backslash (\mathbb{R} \times \{0\})$ with respect to $\rho_1$ as $\epsilon \rightarrow 0^+$, the first relation in \eqref{relation} yields
\begin{align*}
\rho_1(\mathbb{R}^2 \backslash (\mathbb{R} \times \{0\})) &= \lim_{\epsilon \rightarrow 0^+}\int_{\mathbb{R}^2}\chi_{\Omega_\epsilon}(s, t)\;d\rho_1(s, t)\\
&= \lim_{\epsilon \rightarrow 0^+}\int_{\mathbb{R}^2}s^2\cdot\frac{\chi_{\Omega_\epsilon}(s, t)}{st}\;d\rho(s, t) = \lim_{\epsilon \rightarrow 0^+}\int_{\mathbb{R}^2}s^2\;d\sigma_\epsilon(s, t),
\end{align*}
from which we obtain
\begin{equation}\label{expression1}
\kappa_{2, 0}^\mu = \rho_1(\mathbb{R}^2) = \rho_1(\mathbb{R} \times \{0\}) + \lim_{\epsilon \rightarrow 0^+}\int_{\mathbb{R}^2}s^2\;d\sigma_\epsilon(s, t).
\end{equation}
For any $m \geq 3$, we have
$$\kappa_{m, 0}^\mu = \int_{\mathbb{R}^2}s^{m - 2}\;d\rho_1(s, t) = \lim_{\epsilon \rightarrow 0^+}\int_{\mathbb{R}^2}s^{m - 2}\chi_{\Omega_\epsilon}(s, t)\;d\rho_1(s, t),$$
by using the property that $s^{m - 2}\chi_{\Omega_\epsilon}(s, t) \rightarrow s^{m - 2}$ a.e. on $\mathbb{R}^2$ with respect to the measure $\chi_{\{s \neq 0\}}\rho_1$ as $\epsilon \rightarrow 0^+$ and applying the dominated convergence theorem in the second equality. Therefore, by the first relation in \eqref{relation} again, we obtain
\begin{equation}\label{expression2}
\int_{\mathbb{R}^2}s^{m - 2}\;d\rho_1(s, t) = \lim_{\epsilon \rightarrow 0^+}\int_{\mathbb{R}^2}s^m\;d\sigma_\epsilon(s, t),\;\;\;\;\;m \geq 3.
\end{equation}
Making use of the second relation in \eqref{relation} and similar arguments as shown above, one can show that
\begin{equation}\label{expression3}
\kappa_{0, 2}^\mu = \rho_2(\{0\} \times \mathbb{R}) + \lim_{\epsilon \rightarrow 0^+}\int_{\mathbb{R}^2}t^2\;d\sigma_\epsilon(s, t)
\end{equation}
and
\begin{equation}\label{expression4}
\kappa_{0, n}^\mu = \lim_{\epsilon \rightarrow 0^+}\int_{\mathbb{R}^2}t^n\;d\sigma_\epsilon(s, t),\;\;\;\;\;n \geq 3.
\end{equation}
Next, observe that $\chi_{\Omega_\epsilon} \rightarrow 1$ a.e. on $\mathbb{R}^2 \backslash \{(0, 0)\}$ with respect to $\rho$ as $\epsilon \rightarrow 0^+$, thus an application of the dominated convergence theorem to the positive and negative parts $\rho^+ = \chi_{\{st \geq 0\}}\rho$ and $\rho^- = \chi_{\{st < 0\}}\rho$ of the signed measure $\rho$ yields
$$\rho(\mathbb{R}^2 \backslash \{(0, 0)\}) = \lim_{\epsilon \rightarrow 0^+}\int_{\mathbb{R}^2}\chi_{\Omega_\epsilon}(s, t)\;d\rho(s, t).$$
This implies that
\begin{equation}\label{expression5}
\kappa_{1, 1}^\mu = \rho(\mathbb{R}^2) = \rho(\{(0, 0)\}) + \lim_{\epsilon \rightarrow 0^+}\int_{\mathbb{R}^2}st\;d\sigma_\epsilon(s, t).
\end{equation}
Finally, for any pair $(m, n)$ satisfying the conditions $m, n \geq 1$ and $m\cdot n \geq 2$, the function
$s^{m - 1}t^{n - 1}\chi_{\Omega_\epsilon} \rightarrow s^{m - 1}t^{n - 1}$ a.e. on $\mathbb{R}^2$ with respect to the signed measure $\chi_{\{s \cdot t \neq 0\}}\rho$ as $\epsilon \rightarrow 0^+$, and hence the same technique implies that
\begin{equation}\label{expression6}
\kappa_{m, n}^\mu = \int_{\mathbb{R}^2}s^{m - 1}t^{n - 1}\;d\rho(s, t) = \lim_{\epsilon \rightarrow 0^+}\int_{\mathbb{R}^2}s^mt^n\;d\sigma_\epsilon(s, t).
\end{equation}
The expressions for $\kappa_{m, n}^\mu$ shown in \eqref{expression1}-\eqref{expression6} allow us to conclude that for any $p = p(s, t) = \sum_{j = 1}^\ell c_js^{m_j}t^{n_j} \in \mathbb{C}_0[s, t]$ (here we consider the most general form of $p$, that is, $(m_1, n_1) = (1, 0)$ and $(m_2, n_2) =(0, 1)$),
\begin{align*}
[p, p]_\kappa &= \sum_{j, k=1}^\ell c_j\overline{c}_k\kappa_{m_j + m_k, n_j + n_k}^\mu\\
&= m(c_1, c_2) + \lim_{\epsilon \rightarrow 0^+}\left(\sum_{j, k = 1}^\ell c_j\overline{c}_k\int_{\mathbb{R}^2}s^{m_j + m_k}t^{n_j + n_k}\;d\sigma_\epsilon(s, t)\right)\\
&= m(c_1, c_2) + \lim_{\epsilon \rightarrow 0^+}\int_{\mathbb{R}^2}|p(s, t)|^2\;d\sigma_\epsilon(s, t),
\end{align*}
where
$$m(c_1, c_2) = |c_1|^2\rho_1(\mathbb{R} \times \{0\}) + 2\Re(c_1\overline{c}_2)\rho(\{(0, 0)\}) + |c_2|^2\rho_2(\{0\} \times \mathbb{R}).$$
The inequality \eqref{atomineq} shows that $m(c_1, c_2) \geq 0$ for any complex numbers $c_1$ and $c_2$, thus we establish the positivity of $[p, p]_\kappa$ by the above calculation that
\begin{equation}\label{reduced}
[p, p]_\kappa =  m(c_1, c_2) + \lim_{\epsilon \rightarrow 0^+}\int_{\mathbb{R}^2}|p(s, t)|^2\;d\sigma_\epsilon(s, t).
\end{equation}
Similarly, making use of the expressions \eqref{expression2}, \eqref{expression4}, and \eqref{expression6}, we see that for $m, n \geq 0$ with $m + n \geq 1$,
$$[s^mt^np, p]_\kappa = \sum_{j, k = 1}^\ell c_j\overline{c}_k\kappa_{m_j + m_k + m, n_j + n_k + n}^\mu = \lim_{\epsilon \rightarrow 0^+}\int_{\mathbb{R}^2}s^mt^n|p(s, t)|^2\;d\sigma_\epsilon(s, t)$$
because the index $(m_j + m_k + m, n_j + n_k + n)$ of $\kappa^\mu$ in the summand does not belong to the set $\{(1, 0), (0, 1), (1, 1), (2, 0), (0, 2)\}$. From this, along with the result in \eqref{reduced}, we deduce that $|[s^mt^np, p]_\kappa| \leq L^{m + n}[p, p]_\kappa$, where $L = \sup\{|s|, |t|: (s, t) \in \mathrm{supp}(\rho)\} < \infty$. This yields assertion $(2)$ and completes the proof.
\end{proof}

\begin{rmk}\label{product}
Suppose that $\mu$ is a compactly supported planar probability measure. If $\kappa^\mu$ is conditionally positive semi-definite and $\kappa^\mu_{0, 2} = 0$, then $\kappa^\mu_{m, n}=0$ for any $(m, n)$ with $m + n \geq 2$ and $n \geq 1$. In this case, $\mu$ is the distribution of $(a, \kappa_{0, 1}^\mu\cdot 1)$, where $a$ is a self-adjoint random variable in some $C^*$-probability space. In other words, $\mu = \nu_a \times \delta_{\kappa_{0, 1}^\mu}$, the product measure of the distribution $\nu_a$ of $a$ and $\delta_{\kappa_{0, 1}^\mu}$. If, in addition, $\kappa^\mu$ is conditionally bounded, then the $1$-sequence $\{\kappa^\mu_{m + 2, 0}\}_{m \geq 0}$ is a Hausdorff moment sequence on a bounded interval, i.e. it is a determined moment sequence with a compactly supported representing measure $\rho_1$ on $\mathbb{R}$. This shows that $\nu_a$ is $\boxplus$-infinitely divisible by \cite[Theorem 13.16]{NS06}, and the bi-free $\mathcal{R}$-transform of $\mu$ is given by
\begin{equation}\label{special1}
\mathcal{R}_\mu(z, w) = z\left(\kappa_{1, 0}^\mu + \int_{\mathbb{R}^2}\frac{z}{1 - zs}\;d\rho_1(s)\right) + \kappa_{0, 1}^\mu w.
\end{equation}
Conversely, any function of the form on the right-hand side of \eqref{special1} is the bi-free $\mathcal{R}$-transform of some compactly supported planar probability measure. Applying this observation to $\mathcal{R}_\mu / N$ for any $N \in \mathbb{N}$ yields the $\boxplus\boxplus$-infinite divisibility of $\mu$. In general, one can easily see that
the product of two compactly supported $\boxplus$-infinitely divisible measures on $\mathbb{R}$ is $\boxplus\boxplus$-infinitely divisible.
\end{rmk}

\begin{lem}\label{moment3}
Suppose that $\{\mu_N\}_{N \in \mathbb{N}}$ is a sequence of compactly supported planar probability measures with the property that for any $(m, n) \in \mathbb{Z}_+^2$, the limit
$$M_{m, n} := \lim_{N \rightarrow \infty}\int_{\mathbb{R}^2}s^mt^n\;d\mu_N(s, t)$$
exists and is a finite number. If there exists a finite number $L > 0$ such that $|M_{m, n}| \leq L^{m + n}$ for all $(m, n) \in \mathbb{Z}_+^2$, then the $2$-sequence $M := \{M_{m, n}\}_{(m, n) \in \mathbb{Z}_+^2}$ is a determined moment sequence and its representing measure is compactly supported.
\end{lem}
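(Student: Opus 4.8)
The plan is to verify the two hypotheses of Theorem \ref{moment1} for the $2$-sequence $M$ and then simply read off its conclusion. Note first that $M_{0,0} = \lim_{N\to\infty}\mu_N(\mathbb{R}^2) = 1 > 0$, so $M$ is an admissible input. Equip $\mathbb{C}[s,t]$ with the sesquilinear form $[\cdot,\cdot]_M$ as in \eqref{sesquilinear}. For condition $(1)$ I would fix $p = \sum_{j=1}^\ell c_j s^{m_j}t^{n_j}$ and interchange the (finite) sum with the limit to obtain
$$[p,p]_M = \sum_{j,k=1}^\ell c_j\overline{c}_k M_{m_j+m_k,\,n_j+n_k} = \lim_{N\to\infty}\int_{\mathbb{R}^2}|p(s,t)|^2\,d\mu_N(s,t) \geq 0,$$
the positivity being immediate since each $\mu_N$ is a positive measure and $s,t$ are real. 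Thus $[\cdot,\cdot]_M$ is positive semi-definite, and the multiplication operators $M_s$ ($p\mapsto sp$) and $M_t$ ($p\mapsto tp$) are symmetric with respect to it.

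The substance of the lemma lies in condition $(2)$, and here the main obstacle is that the $\mu_N$ need not have uniformly bounded supports, so one cannot pass the crude estimate $\bigl|\int_{\mathbb{R}^2} s\,|p|^2\,d\mu_N\bigr| \le L\int_{\mathbb{R}^2} |p|^2\,d\mu_N$ to the limit. Instead I would extract condition $(2)$ from the growth bound $|M_{m,n}| \le L^{m+n}$ by an iterated Cauchy--Schwarz (spectral-radius) argument. Write $A = M_s$ and $\beta_k = [s^{2k}p,p]_M = \langle A^k p, A^k p\rangle \ge 0$. Symmetry of $A$ gives $\beta_k = \langle A^{k-1}p, A^{k+1}p\rangle$, so Cauchy--Schwarz yields the log-convexity $\beta_k^2 \le \beta_{k-1}\beta_{k+1}$. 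When $\beta_0 = [p,p]_M > 0$ this forces the ratios $\beta_{k+1}/\beta_k$ to be non-decreasing, whence $(\beta_1/\beta_0)^k \le \beta_k/\beta_0$ for every $k$. On the other hand the hypothesis gives $0 \le \beta_k \le L^{2k}\big(\sum_j |c_j| L^{m_j+n_j}\big)^2$, so taking $k$-th roots and letting $k\to\infty$ collapses the $p$-dependent constant and produces $\beta_1 = \|Ap\|^2 \le L^2\|p\|^2$; the degenerate case $\beta_0=0$ (and the trivial case $\beta_1=0$) is handled directly by Cauchy--Schwarz. Consequently $|[sp,p]_M| = |\langle Ap,p\rangle| \le \|Ap\|\,\|p\| \le L[p,p]_M$, and the identical argument applied to $M_t$ gives $|[p,tp]_M| \le L[p,p]_M$.

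With conditions $(1)$ and $(2)$ established, Theorem \ref{moment1} applies directly and asserts that $M$ is a moment $2$-sequence whose representing measure is uniquely determined and supported in $[-L,L]^2$, which is precisely the claim. I expect the log-convexity step to be the only delicate point, since it is exactly what replaces the unavailable uniform support bound on the $\mu_N$ by the available uniform moment bound on the limit $M$.
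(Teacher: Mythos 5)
Your proof is correct, but it takes a genuinely different route from the paper's. The paper never verifies condition $(2)$ of Theorem \ref{moment1} for the full $2$-sequence: instead it checks that the one-dimensional section sequences $\{M_{m,2n_0}\}_{m\geq 0}$ and $\{M_{2m_0,n}+M_{0,n}\}_{n\geq 0}$ are positive semi-definite (Hamburger) moment sequences, uses the bound $|M_{m,n}|\leq L^{m+n}$ to see that their representing measures are compactly supported and hence that these one-dimensional sequences are determined, and then invokes the Devinatz-type determinacy criterion recorded in part $(2)$ of Remark \ref{moment2} to conclude that $M$ is a determined moment $2$-sequence; compact support of the two-dimensional representing measure is then extracted from the growth bound a second time. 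You instead feed $M$ directly into Theorem \ref{moment1}, and the one nontrivial point is exactly the one you isolate: converting the bound on the limiting moments into the operator bound $|[sp,p]_M|\leq L\,[p,p]_M$. Your log-convexity/spectral-radius argument for the symmetric multiplication operator is sound --- the chain $\beta_k^2\leq\beta_{k-1}\beta_{k+1}$, the backward propagation of zeros showing all $\beta_k>0$ when $\beta_0>0$, and the collapse of the $p$-dependent constant under $k$-th roots all check out --- and your observation that the crude estimate $\bigl|\int s|p|^2\,d\mu_N\bigr|\leq L\int|p|^2\,d\mu_N$ is unavailable (the supports of the $\mu_N$ need not be uniformly bounded) correctly identifies the obstruction that the paper sidesteps by its detour through the sections. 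What each approach buys: yours is self-contained modulo Theorem \ref{moment1} alone and delivers the localization of the support in $[-L,L]^2$ as part of that theorem's conclusion, at the cost of the Cauchy--Schwarz bookkeeping; the paper's is shorter to write but rests on the two-parameter determinacy criterion of Remark \ref{moment2}$(2)$ and leaves the positive semi-definiteness hypothesis of that remark (which you verify by the same limit computation) implicit.
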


\begin{proof}
It is easy to verify that for every fixed numbers $m_0$ and $n_0$ in $\mathbb{N} \cup \{0\}$, the $1$-sequences $\alpha := \{M_{m, 2n_0}\}_{m \geq 0}$ and $\beta := \{M_{2m_0, n} + R_{0, n}\}_{n \geq 0}$ fulfill the sufficient condition of being a moment sequence on $\mathbb{R}$ (Hamburger moment sequence). Then using the hypothesis that $|M_{m, n}| \leq L^{m + n}$, we see that the representing measures of $\alpha$ and $\beta$ are both compactly supported, thus $\alpha$ and $\beta$ are determined. The desired result now follows from part (2) of Remark \ref{moment2} and the hypothesis that $|M_{m, n}|\leq L^{m + n}$.
\end{proof}

\begin{exmp}
Let us see some examples of $\boxplus\boxplus$-infinitely divisible distributions and their bi-free L\'{e}vy-Hin\v{c}in representations.
\begin{enumerate}[$(1)$]
\item Let $a = \ell(f) + \ell(f)^*$ and $b = r(g) + r(g)^*$ with $\Im\langle f, g\rangle = 0$ be two semi-circular random variables in the $C^*$-probability space $(B(\mathcal{F(H)}), \tau_\mathcal{H})$. Such a two-faced pair $(a, b)$ is called a bi-free Gaussian pair (see \cite[Section 7]{V14}). By Propositions \ref{commuteprop} and \ref{vanishkappa}, the only non-vanishing bi-free cumulants of $(a, b)$ are
$$\kappa_{2, 0}^\mu = \|f\|^2,\;\;\;\;\;\kappa_{0, 2}^\mu=\|g\|^2,\;\;\;\;\;\text{and}\;\;\;\;\;\kappa_{1, 1}^\mu = \langle f, g\rangle,$$
where $\mu$ denotes the distribution of $(a, b)$. Hence,
$$\mathcal{R}_\mu(z, w) = \|f\|^2z^2 + \langle f, g\rangle zw + \|g\|^2w^2,$$
and we conclude from Theorem \ref{IDCS} that $\mu$ is a $\boxplus\boxplus$-infinitely divisible distribution with
$\rho_1 = \|f\|^2\delta_{(0, 0)}$, $\rho_2 = \|g\|^2\delta_{(0, 0)}$, and $\rho = \langle f,g\rangle\delta_{(0, 0)}$.

\item Let $\lambda > 0$ and $(\alpha, \beta) \in \mathbb{R}^2$. For $N \in \mathbb{N}$, let
$$\mu_N = \left(1 - \dfrac{\lambda}{N}\right)\delta_{(0, 0)} + \dfrac{\lambda}{N}\delta_{(\alpha, \beta)}$$
and let $\{(a_{N ; k}, b_{N ; k})\}_{k = 1}^N$ be pairs of commuting self-adjoint random variables which are bi-free and identically distributed with distribution $\mu_N$ in some $C^*$-probability space $(\mathcal{A}_N, \varphi_N)$. Note that for $m, n \geq 0$ with $m + n \geq 1$, we have
$$\kappa_{m, n} := N\cdot\varphi_N(a_{N ; 1}^mb_{N ; 1}^n) = N\cdot\dfrac{\lambda}{N}\alpha^m\beta^n = \lambda\alpha^m\beta^n,$$
thus Theorem \ref{BFlimthm} implies the existence of a two-faced pair of commuting random variables $(a, b)$ in some non-commutative probability space $(\mathcal{A}, \varphi)$ such that the bi-free cumulants $\kappa_{m,n}^{(a, b)}$ of $(a, b)$ coincide with $\kappa_{m, n}$ and the mixed moments $\int_{\mathbb{R}^2}s^mt^n\;d\mu_N^{\boxplus\boxplus N}$ converge to $\varphi(a^mb^n)$ for $m, n \geq 0$ as $N \rightarrow \infty$. Let $\kappa_\pi(a, b) = \kappa_\pi(\underbrace{a, \dots, a}_{m\;\text{times}}, \underbrace{b, \dots, b}_{n\;\text{times}})$ for $\pi \in \mathrm{NC}(m + n)$ and $L = \max\{1, \lambda, |\alpha|, |\beta|\}$. Then using the
moment-cumulant formula, and the inequalities $|\kappa_{m, n}|\leq(L^2)^{m + n}$ and $\sharp\mathrm{NC}(k) \leq 4^k$ for $k \in \mathbb{N}$, we have
$$|\varphi(a^mb^n)| \leq \sum_{\pi \in \mathrm{NC}(m + n)}|\kappa_\pi(a, b)| \leq (L^2)^{m + n}\cdot\sharp\mathrm{NC}(m + n) \leq (4L^2)^{m + n}.$$
By Lemma \ref{moment3}, we see that the distribution of the pair $(a, b)$ is some compactly supported planar probability measure $\mu$ and $\kappa_{m, n} = \kappa_{m, n}^\mu$. Finally, a simple calculation shows that
\begin{align*}
\mathcal{R}_\mu(z, w) &= \sum_{\substack{m, n \geq 0\\m + n \geq 1}}\lambda(\alpha z)^m(\beta w)^n\\
& = \lambda z\left(\alpha + \frac{\alpha^2z}{1 - \alpha z}\right) + \lambda w\left(\beta + \frac{\beta^2w}{1 - \beta w}\right) + \dfrac{\lambda\alpha\beta zw}{(1 - \alpha z)(1 - \beta w)},
\end{align*}
from which we conclude the $\boxplus\boxplus$-infinite divisibility of $\mu$ by Theorem \ref{IDCS} with $\rho_1 = \lambda s^2\delta_{(\alpha, \beta)}$, $\rho_2 = \lambda t^2\delta_{(\alpha, \beta)}$, and $\rho = \lambda st\delta_{(\alpha, \beta)}$. Observe that the function
$$\mathcal{R}(z) = \lambda\left(\alpha + \frac{\alpha^2z}{1 - \alpha z}\right)$$
is the free $\mathcal{R}$-transform of the free Poisson distribution with rate $\lambda$ and jump size $\alpha$ (see \cite[Proposition 12.11]{NS06}). We call $\mu$ the \textit{bi-free Poisson distribution} with rate $\lambda$ and jump size $(\alpha,\beta)$.

\item For any $\lambda > 0$ and compactly supported planar probability measure $\nu$, consider the distribution
$$\mu_N = \left(1 - \frac{\lambda}{N}\right)\delta_{(0, 0)} + \frac{\lambda}{N}\nu.$$
For any $(m, n) \in \mathbb{Z}_+^2$, observe that
$$\kappa_{m, n} := N\int_{\mathbb{R}^2}s^mt^n\;d\mu_N(s, t) = \lambda\int_{\mathbb{R}^2}s^mt^n\;d\nu(s, t).$$
Using similar arguments shown in $(2)$ of this example and the fact that $|\kappa_{m, n}| \leq (4L^2)^{m + n}$, where $L = \max\{1, \lambda, |s|, |t| : (s, t) \in \mathrm{supp}(\nu)\}$, one can show the existence of a compactly supported planar probability measure $\mu$ such that $\kappa_{m, n}^\mu = \kappa_{m, n}$. A simple calculation then show that
\begin{align*}
\mathcal{R}_\mu(z, w) &= \sum_{m \geq 1}\kappa_{m, 0}^\mu z^m + \sum_{n \geq 1}\kappa_{0, n}^\mu w^n + \sum_{m, n \geq 1}\kappa_{m, n}^\mu z^mw^n\\
&= z\mathcal{R}_1(z) + w\mathcal{R}_2(w) + \int_{\mathbb{R}^2}\frac{zw}{(1 - zs)(1 - wt)}\;d\rho(s, t)
\end{align*}
holds in a neighbourhood of $(0, 0)$, where
$$\mathcal{R}_1(z) = \kappa_{1, 0}^\mu + \int_{\mathbb{R}^2}\frac{z}{1 - zs}\;\lambda s^2d\nu(s, t) = \int_{\mathbb{R}}\frac{\lambda s}{1 - zs}\;d\nu^{(1)}(s),$$
$$\mathcal{R}_2(w) = \kappa_{0, 1}^\mu + \int_{\mathbb{R}^2}\frac{w}{1 - wt}\;\lambda t^2d\nu(s, t) = \int_{\mathbb{R}}\frac{\lambda t}{1 - wt}\;d\nu^{(2)}(t),$$
and $d\rho(s, t) = \lambda std\nu(s, t)$ (here, the probability measures $\nu^{(1)}$ and $\nu^{(2)}$ are the marginal laws of $\nu$, that is, $\nu^{(1)}(B)=\nu(B\times\mathbb{R})$ and $\nu^{(2)}(B)=\nu(\mathbb{R}\times B)$ for any Borel set $B$ of $\mathbb{R}$). This yields the $\boxplus\boxplus$-infinite divisibility of $\mu$ by Theorem
\ref{IDCS}. Note that the function $\mathcal{R}_j$, $j = 1, 2$, is the free $\mathcal{R}$-transform of the compound free Poisson distribution with rate $\lambda$ and jump distribution $\nu^{(j)}$ (see \cite[Proposition 12.15]{NS06}). We call $\mu$ the \textit{compound bi-free Poisson distribution} with rate $\lambda$ and jump distribution $\nu$. Finally, we mention another relation between $\nu$ and the limiting distribution $\mu$: $\mathcal{R}_\mu(z, w) = \lambda[G_\nu(1/z, 1/w) - 1]$ for $(z, w) \in (\mathbb{C} \backslash \mathbb{R})^2$. To obtain this relation, simply observe that
\begin{align*}
\mathcal{R}_\mu(z, w) &= \lambda\left(-1 + \sum_{m, n \geq 0}\int_{\mathbb{R}^2}(zs)^m(wt)^n\;d\nu(s, t)\right)\\
&= \lambda\left(-1 + \int_{\mathbb{R}^2}\frac{d\nu(s, t)}{(1 - zs)(1 - wt)}\right),
\end{align*}
which holds in a neighbourhood of $(0, 0)$, and then extends analytically to $(\mathbb{C} \backslash \mathbb{R})^2$.
\end{enumerate}
\end{exmp}

\section{Bi-free L\'{e}vy processes}

In classical probability theory, there is an important class of stochastic processes, called L\'{e}vy processes, where each process has independent and stationary increments. The non-commutative analogues of these processes are called free L\'{e}vy processes, first studied by Biane. We refer the reader to \cite{B98} for details. In this section, we shall discuss the relation between $\boxplus\boxplus$-infinitely divisible distributions and stationary processes with bi-free increments.

\begin{defn}\label{bifreeLevydef}
A \textit{bi-free L\'{e}vy process} $(Z_t)_{t \geq 0}$ is a family of pairs of commuting self-adjoint random variables in some $C^*$-probability space, that is, $Z_t = (X_t, Y_t)$ where $X_t = X_t^*$, $Y_t = Y_t^*$, and $[X_t, Y_t] = 0$, with the following properties:
\begin{enumerate}[$(1)$]
\item $Z_0 = (0, 0)$;

\item for any set of times $0 \leq t_0 < t_1 < \cdots < t_n$, the increments
$$Z_{t_0},\;Z_{t_1} - Z_{t_0},\;Z_{t_2} - Z_{t_1}, \dots, Z_{t_n} - Z_{t_{n - 1}}$$
are bi-freely independent, where $Z_t - Z_s := (X_t - X_s, Y_t - Y_s)$;

\item for all $s$ and $t$ in $[0, \infty)$, the distribution of $Z_{s + t} - Z_s$ depends only on $t$;

\item the distribution of $Z_t$ tends to $\delta_{(0, 0)}$ weakly as $t \rightarrow 0^+$.
\end{enumerate}
\end{defn}

We have the following relation between $\boxplus\boxplus$-infinitely divisible distributions and bi-free L\'{e}vy processes.

\begin{thm}\begin{enumerate}[$(1)$]
\item Let $(Z_t)_{t \geq 0}$ be a bi-free L\'{e}vy process and let $\mu_t$ be the distribution of $Z_t$, $t \geq 0$. Then $\mu_1$ is $\boxplus\boxplus$-infinitely divisible and for any $T > 0$, the distributions $(\mu_t)_{0 \leq t \leq T}$ have uniformly bounded supports. Moreover, the family $(\mu_t)_{t \geq 0}$ satisfies the additive bi-free semigroup property
\begin{equation}\label{semigroup2}
\mu_s \boxplus\boxplus \mu_t = \mu_{s + t},\;\;\;\;\;s, t \geq 0,
\end{equation}
and for any $t \geq 0$, the identity
\begin{equation}\label{semigroup3}
\mathcal{R}_{\mu_t}(z, w) = t\mathcal{R}_{\mu_1}(z, w)
\end{equation}
holds in a neighbourhood of $(0, 0)$.

\item For any $\boxplus\boxplus$-infinitely divisible compactly supported planar measure $\mu$, there exists a bi-free
L\'{e}vy process $(Z_t)_{t \geq 0}$ such that the distribution of $Z_1$ is equal to $\mu$.
\end{enumerate}
\end{thm}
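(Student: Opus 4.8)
The plan is to handle the two assertions separately. For assertion (1) I would first read the semigroup property \eqref{semigroup2} off Definition \ref{bifreeLevydef}: writing $Z_{s+t} = Z_s + (Z_{s+t}-Z_s)$ and applying properties (2) and (3) with the times $0 \le s < s+t$, the pair $Z_s$ and the increment $Z_{s+t}-Z_s$ are bi-free, and the increment is distributed as $Z_t$ (since $Z_0=(0,0)$); hence $\mu_{s+t} = \mu_s \boxplus\boxplus \mu_t$. Linearization of the bi-free $\mathcal{R}$-transform then yields $\mathcal{R}_{\mu_{s+t}} = \mathcal{R}_{\mu_s} + \mathcal{R}_{\mu_t}$, i.e. each (real) bi-free cumulant satisfies the additive functional equation $\kappa_{m,n}(\mu_{s+t}) = \kappa_{m,n}(\mu_s) + \kappa_{m,n}(\mu_t)$. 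Because $\mu_1 = (\mu_{1/n})^{\boxplus\boxplus n}$ for every $n$ and each $\mu_{1/n}$ is the compactly supported law of a commuting self-adjoint pair, $\mu_1$ is $\boxplus\boxplus$-infinitely divisible.

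The substantive part of (1) is uniform boundedness together with the scaling \eqref{semigroup3}, and I expect this to be the main obstacle, since solving the Cauchy equation above for irrational $t$ needs exactly the moment control that boundedness provides. I would pass to the marginals: the laws $\nu_t^{(1)}$ of $X_t$ and $\nu_t^{(2)}$ of $Y_t$ are one-variable free L\'{e}vy processes (free, stationary, weakly continuous increments), so by the free theory \cite{B98} and the free L\'{e}vy--Hin\v{c}in representation \eqref{LHformulaR}, whose free L\'{e}vy measure scales linearly, one gets $\mathcal{R}_{\nu_t^{(i)}} = t\,\mathcal{R}_{\nu_1^{(i)}}$ and supports $\mathrm{supp}(\nu_t^{(i)})$ that are uniformly bounded for $t \in [0,T]$. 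Feeding these marginal bounds into the Chebyshev-type estimate from the proof of Theorem \ref{IDCS} (controlling $\mu_t(\{|s|\ge c\})$ by $\int s^{2m}\,d\nu_t^{(1)}$ and letting $m\to\infty$) produces a box $[-M,M]^2$ containing $\mathrm{supp}(\mu_t)$ for all $t\in[0,T]$. With this uniform bound, property (4) upgrades to moment convergence $\mu_t \to \delta_{(0,0)}$; consequently each scalar map $t\mapsto \kappa_{m,n}(\mu_t)$ is additive and right-continuous at $0$, hence continuous and linear, so $\kappa_{m,n}(\mu_t) = t\,\kappa_{m,n}(\mu_1)$, which is precisely \eqref{semigroup3}.

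For assertion (2) I would build the process on a full Fock space. Using Theorem \ref{IDCS} and the model of Proposition \ref{commuteprop}, realize $\mu$ as the law of $a = \ell(f)+\ell(f)^*+\Lambda_\ell(T_1)+\kappa_{1,0}^\mu\cdot 1$ and $b = r(g)+r(g)^*+\Lambda_r(T_2)+\kappa_{0,1}^\mu\cdot 1$ on $\mathcal{F}(\mathcal{H})$; then on $\mathcal{F}(\mathcal{H}\otimes L^2([0,\infty)))$ set
$$X_t = \ell(f\otimes 1_{[0,t]}) + \ell(f\otimes 1_{[0,t]})^* + \Lambda_\ell(T_1\otimes P_{[0,t]}) + t\kappa_{1,0}^\mu\cdot 1,$$
with $Y_t$ defined analogously from $g, T_2, \kappa_{0,1}^\mu$, where $P_{[0,t]}$ denotes multiplication by $1_{[0,t]}$. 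Since $\|1_{[0,t]}\|^2 = t$, $P_{[0,t]}1_{[0,t]} = 1_{[0,t]}$, and $(f,g,T_1,T_2)$ satisfy the commutation data of $\mu$, Proposition \ref{commuteprop} gives $[X_t,Y_t]=0$, while the computation of Proposition \ref{vanishkappa} yields $\kappa_{m,n}(Z_t) = t\,\kappa_{m,n}^\mu$, so $Z_0 = (0,0)$ and $Z_1 \sim \mu$. Increments over disjoint intervals are built from vectors and gauge operators living on the orthogonal summands $\mathcal{H}\otimes L^2([t_{i-1},t_i])$, so Remark \ref{orthogonal} supplies bi-free independence of increments; translation invariance of $1_{[s,s+t]}$ gives stationarity; and $\kappa_{m,n}(Z_t) = t\,\kappa_{m,n}^\mu \to 0$ gives weak continuity at $0$. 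The only points needing care are verifying that $T_i\otimes P_{[t_{i-1},t_i]}$ preserves the summand $\mathcal{H}\otimes L^2([t_{i-1},t_i])$ and annihilates its orthogonal complement, so that the hypotheses of Remark \ref{orthogonal} hold verbatim.
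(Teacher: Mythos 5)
Your proof of assertion (2) is essentially the paper's: the same Fock-space model $\mathcal{F}(L^2(\mathbb{R}_+)\otimes\mathcal{H})$ with $\ell(\chi_{[0,t]}\otimes f)$, $\Lambda_\ell(M_{[0,t]}\otimes T_1)$, etc., with bi-freeness of increments from Remark \ref{orthogonal} and the cumulant scaling $\kappa_{m,n}(Z_t)=t\kappa_{m,n}^\mu$ from Proposition \ref{vanishkappa}. For assertion (1) you diverge from the paper in a genuine way. The paper proves (2) first and then \emph{uses} it: it takes the constructed semigroup $(\nu_t)$ with $\nu_1=\mu_1$, matches $\mu_{p/q}=\nu_{p/q}$ at rational times by equating cumulants of $q$-fold convolution powers, and extends to all $t$ by weak continuity; uniform boundedness of $\mathrm{supp}(\mu_t)$ then comes for free because the $\nu_t$ are laws of operators with $\sup_{0\le t\le T}\{\|X_t\|,\|Y_t\|\}<\infty$. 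You instead keep (1) independent of (2) by passing to the marginal processes $(X_t)$ and $(Y_t)$, importing the one-variable statements ($\mathcal{R}_{\nu_t^{(i)}}=t\mathcal{R}_{\nu_1^{(i)}}$ and uniform support bounds) from the free theory, recovering the joint support bound by the Chebyshev estimate from the proof of Theorem \ref{IDCS}, and then solving the Cauchy functional equation for each $\kappa_{m,n}(\mu_t)$. Both routes work; yours is logically cleaner in that (1) does not depend on (2), while the paper's is more self-contained. Two soft spots in your version deserve attention: the phrase ``whose free L\'{e}vy measure scales linearly'' presupposes the scaling you are invoking --- what you actually need from the one-variable theory is that a weakly continuous compactly supported $\boxplus$-semigroup satisfies $\mathcal{R}_{\nu_t}=t\mathcal{R}_{\nu_1}$ together with uniform support bounds on $[0,T]$ (the latter extracted from \cite[Lemma 8.5]{BV92} as in the proof of Theorem \ref{IDCS}, which requires controlling the means $\varphi(X_t)$ before moment convergence is available); and the bi-freeness of the pairs of increments does need the standard remark that the left faces of a bi-free family are themselves free, so that the marginals are honest free L\'{e}vy processes. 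Neither is a fatal gap, but both should be spelled out if you want (1) to stand on the cited one-variable results alone.
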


\begin{proof}
First, we prove assertion (2). Using the Hilbert space $\mathcal{H}$, the vectors $f, g$ in $\mathcal{H}$, and the operators
$$a = \ell(f) + \ell(f)^* + \Lambda_\ell(T_1) + \kappa_{1, 0}^\mu\;\;\;\;\;\mathrm{and}\;\;\;\;\;b = r(g) + r(g)^* + \Lambda_r(T_2) + \kappa_{0,1}^\mu$$
in $B(\mathcal{H})$ constructed in the proof of Theorem \ref{IDCS}, the bi-free cumulants of the pair of random variables $(a, b)$ coincide with the corresponding bi-free cumulants of $\mu$. Let $\mathcal{K} = L^2(\mathbb{R}_+, dx) \otimes \mathcal{H}$, where $\mathbb{R}_+ = [0,\infty)$. For any Borel set $I$ in $\mathbb{R}_+$, denote by $\chi_I$ the characteristic function of $I$ and $M_I$ the multiplication operator by $\chi_I$ in $B(L^2(\mathbb{R}_+, dx))$. Furthermore, let
$$f_I = \chi_I \otimes f,\;\;\;\;\;g_I = \chi_I \otimes g,\;\;\;\;\;A_I = M _I \otimes T_1,\;\;\;\;\;\text{and}\;\;\;\;\;B_I = M_I \otimes T_2.$$
Consider now the family $(Z_t)_{t \geq 0} = ((X_t, Y_t))_{t \geq 0}$, where
$$X_t = \ell(f_{[0, t]}) + \ell(f_{[0, t]})^* + \Lambda_\ell(A_{[0, t]}) + t\cdot\kappa_{1, 0}^\mu$$
and
$$Y_t = r(g_{[0, t]}) + r(g_{[0, t]})^* + \Lambda_r(B_{[0, t]}) + t\cdot\kappa_{0,1}^\mu$$
are commuting self-adjoint random variables in the $C^*$-probability space $(B(\mathcal{F(K)}), \tau_\mathcal{K})$ (the property $[X_t, Y_t]=0$ follows from Proposition \ref{commuteprop}). Clearly, conditions (1) and (2) in Definition \ref{bifreeLevydef} hold for the family $(Z_t)_{t \geq 0}$ constructed above by the fact that $\{L^2((t_j, t_{j + 1}], dx) \otimes \mathcal{H}\}_{0 \leq j \leq n - 1}$ are pairwise orthogonal Hilbert spaces and Remark \ref{orthogonal}. Moreover, Proposition \ref{vanishkappa} shows that the bi-free cumulants of $Z_{s + t} - Z_s$ are given by
\begin{align*}
\kappa_{m, n} &= \langle(M_{(s, s + t]} \otimes T_1)^{m - 1}(\chi_{(s, s + t]} \otimes f), (M_{(s, s + t]} \otimes T_2)^{n - 1}(\chi_{(s, s + t]} \otimes g)\rangle\\
&= \langle\chi_{(s, s + t]} \otimes T_1^{m - 1}f, \chi_{(s, s + t]} \otimes T_2^{n - 1}g\rangle\\
&= t\langle T_1^{m - 1}f, T_2^{n - 1}g\rangle = t\kappa_{m, n}^{Z_1}
\end{align*}
for $m, n\geq 1$, where $\kappa_{m, n}^{Z_1}$ denotes the bi-free cumulants of $Z_1$, and
\begin{align*}
\kappa_{m, 0} &= \langle(M_{(s, s + t]} \otimes T_1)^{m - 2}(\chi_{(s, s + t]} \otimes f), \chi_{(s, s + t]} \otimes f\rangle\\
& = \langle\chi_{(s, s + t]} \otimes T_1^{m - 2}f, \chi_{(s, s + t]} \otimes f\rangle = t\kappa_{m, 0}^{Z_1}
\end{align*}
for $m \geq 2$. Similarly, we have $\kappa_{0, n} = t\kappa_{0, n}^{Z_1}$ for $n \geq 2$. By Proposition \ref{vanishkappa} again, we obtain $\kappa_{1, 0} = t\kappa_{1, 0}^{Z_1}$ and $\kappa_{0, 1} = t\kappa_{0, 1}^{Z_1}$. Since the bi-free cumulants of $Z_{s + t} - Z_s$ depends only on $t$, so does the distribution of $Z_{s + t} - Z_s$. Note that the above calculations also show that the bi-free cumulants of $Z_1$ coincide with the corresponding bi-free cumulants of $\mu$ and the identity \eqref{semigroup3} holds for the family $(\mu_t)_{t \geq 0}$. To finish the proof, it remains to show that $\mu_t \rightarrow \delta_{(0, 0)}$ weakly as $t \rightarrow 0^+$. Observing that $\sup_{0 \leq t \leq 1}\{\|X_t\|, \|Y_t\|\} < \infty$, it is equivalent to showing that the mixed moments of $\mu_t$ converge to
$0$ as $t \rightarrow 0^+$ because the supports of $\mu_t$, $0 \leq t \leq 1$, are uniformly bounded. Using the fact that $\kappa_{m, n}^{Z_t} = t\kappa_{m, n}^{Z_1} \rightarrow 0$ as $t \rightarrow 0^+$ for any $m, n \geq 0$ with $m + n \geq 1$ and the existence of universal polynomials on the relations of bi-free cumulants and mixed moments of planar probability distributions  (see \cite[Section 5]{V14}), the claim holds. Hence, the family $(Z_t)_{t \geq 0}$ constructed above is a bi-free L\'{e}vy process with distributions $(\mu_t)_{t \geq 0}$.

Next, we prove assertion (1). Let $(Z_t)_{t \geq 0}$ be a bi-free L\'evy process with distributions $(\mu_t)_{t \geq 0}$. Then $Z_s - Z_0$ and $Z_{s + t} - Z_s$ are bi-free, and $(Z_s - Z_0) + (Z_{s + t} - Z_s) = Z_{s + t}$. This shows that $\mu_s \boxplus\boxplus \mu_t = \mu_{s + t}$ for $s, t \geq 0$. By the semigroup property, $\mu_1$ is $\boxplus\boxplus$-infinitely divisible, thus by assertion (2) there exists an additive bi-free semigroup $(\nu_t)_{t \geq 0}$ of planar probability distributions such that $\nu_1 = \mu_1$, $\nu_t \rightarrow \delta_{(0, 0)}$ weakly as $t \rightarrow 0^+$, and $\mathcal{R}_{\nu_t} = t\mathcal{R}_{\mu_1}$. By the semigroup property, we have
$$\underbrace{\mu_{1/q} \boxplus\boxplus \cdots \boxplus\boxplus \mu_{1/q}}_{q\;\text{times}} = \underbrace{\nu_{1/q} \boxplus\boxplus \cdots \boxplus\boxplus \nu_{1/q}}_{q\;\text{times}}$$
for all $q \in \mathbb{N}$, thus $q\kappa_{m, n}^{\mu_{1/q}} = q\kappa_{m, n}^{\nu_{1/q}}$ for all $m, n \geq 0$ with $m + n \geq 1$ by the additivity of bi-free cumulants and the fact that mixed bi-free cumulants vanish. This shows $\mu_{1/q} = \nu_{1/q}$ for all $q \in \mathbb{N}$, and thus
$$\mu_{p/q} = \underbrace{\mu_{1/q} \boxplus\boxplus \cdots \boxplus\boxplus \mu_{1/q}}_{p\;\text{times}} = \underbrace{\nu_{1/q} \boxplus\boxplus \cdots \boxplus\boxplus \nu_{1/q}}_{p\;\text{times}} = \nu_{p/q}$$
for all $p/q \in \mathbb{Q} \cap [0, \infty)$. By the semigroup property again, continuity at $0$ implies continuity at any $t$, which yields $\mu_t = \nu_t$ for all $t \geq 0$. Hence, $\mathcal{R}_{\mu_t} = t\mathcal{R}_{\mu_1}$ as claimed. This finishes the proof.
\end{proof}

\section{Additive bi-free convolution semigroups}

Let $\mu$ be a compactly supported probability measure on $\mathbb{R}$. Then there exists an additive free convolution semigroup $(\mu_t)_{t \geq 1}$ with $\mu_1 = \mu$, where the existence of $\mu_t$ for large $t$ was shown by Bercovici and Voiculescu in \cite{BV95}, and later extended to all $t \geq 1$ by Nica and Speicher in \cite{NS96}. In the bi-free setting, we will use the method of Nica and Speicher to show the existence of the additive bi-free convolution semigroup generated by a compactly supported probability measure on $\mathbb{R}^2$. Let us first recall some definitions and results regarding free compressions. We refer the reader to \cite[Lecture 14]{NS06} for details.

\begin{defn}
Let $(\mathcal{A},\varphi)$ be a non-commutative probability space and $p\in\mathcal{A}$ a projection (i.e. $p^2 = p$) such that $\varphi(p) \neq 0$, then we have the \textit{compression} $(p\mathcal{A}p, \varphi^{p\mathcal{A}p})$, where
$$p\mathcal{A}p = \{pap : a \in \mathcal{A}\}$$
and
$$\varphi^{p\mathcal{A}p}(\cdot) = \dfrac{1}{\varphi(p)}\varphi(\cdot)$$
restricted to $p\mathcal{A}p$. The compression is also a non-commutative probability space with unit element $p = p\cdot 1\cdot p$. Moreover, $(\kappa_n)_{n \geq 1}$ will denote the free cumulants corresponding to $\varphi$ and $(\kappa_n^{p\mathcal{A}p})_{n\geq 1}$ will denote the free cumulants corresponding to $\varphi^{p\mathcal{A}p}$.
\end{defn}

Suppose that $(\mathcal{A}, \varphi)$ is a non-commutative probability space and $p, a_1, \dots, a_m \in \mathcal{A}$
such that $p$ is a projection with $\varphi(p) \neq 0$ and $p$ is free from $\{a_1, \dots, a_m\}$. Then recall from \cite[Theorem 14.10]{NS06} that we have
$$\kappa_n^{p\mathcal{A}p}(pa_{i(1)}p, \dots, pa_{i(n)}p) = \dfrac{1}{\varphi(p)}\kappa_n(\varphi(p)a_{i(1)}, \dots, \varphi(p)a_{i(n)})$$
for all $n \geq 1$ and all $1 \leq i(1), \dots, i(n) \leq m$. In particular, if $(\mathcal{A}, \varphi)$ is a $C^*$-probability space, $p, a_1, \dots, a_r, b_1, \dots, b_r \in \mathcal{A}$ are self-adjoint random variables such that $\varphi(p) \neq 0$, $p$ is free from $\{a_1, \dots, a_r, b_1, \dots, b_r\}$, and $[a_i, b_j] = 0$ for all $1 \leq i, j \leq r$, then we have
\begin{align}
\label{KappaAp}
\begin{split}
&\kappa_{m + n}^{p\mathcal{A}p}(pa_{i(1)}p, \dots, pa_{i(m)}p, pb_{j(1)}p, \dots, pb_{j(n)}p)\\
&= \dfrac{1}{\varphi(p)}\kappa_{m + n}(\varphi(p)a_{i(1)}, \dots, \varphi(p)a_{i(m)}, \varphi(p)b_{j(1)},\cdots,\varphi(p)b_{j(n)})
\end{split}
\end{align}
for all $m, n \geq 0$ with $m + n \geq 1$ and all $1 \leq i(1), \dots, i(m), j(1), \dots, j(n) \leq r$.

We shall use the following result to show the existence of the additive bi-free convolution semigroups.

\begin{lem}\label{semigrouplem}
Given random variables $a_1$, $a_2$, and $p$ in some $C^*$-probability space $(\mathcal{A}, \varphi)$ such that $a_1 = a_1^*$, $a_2 = a_2^*$, $[a_1, a_2] = 0$, and $p$ is a projection free from $\{a_1, a_2\}$, there exists a compactly supported probability measure $\mu$ on $\mathbb{R}^2$ such that
$$\kappa_{m, n}^\mu = \kappa_{m + n}^{p\mathcal{A}p}(\underbrace{pa_1p, \dots, pa_1p}_{m\;\text{times}}, \underbrace{pa_2p, \dots, pa_2p}_{n\;\text{times}})$$
for all $m, n \geq 0$ with $m + n \geq 1$.
\end{lem}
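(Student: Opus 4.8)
The plan is to first use the free compression formula to identify the numbers on the right-hand side explicitly, and then to recognize them as the bi-free cumulants of a compactly supported commuting pair by solving a two-dimensional moment problem. Writing $c = \varphi(p) \in (0, 1]$ and applying \eqref{KappaAp} with $a_{i(1)} = \cdots = a_{i(m)} = a_1$ and $b_{j(1)} = \cdots = b_{j(n)} = a_2$, multilinearity gives
$$R_{m, n} := \kappa_{m + n}^{p\mathcal{A}p}(\underbrace{pa_1p, \dots, pa_1p}_{m}, \underbrace{pa_2p, \dots, pa_2p}_{n}) = \frac{1}{c}\kappa_{m + n}(\underbrace{ca_1, \dots, ca_1}_{m}, \underbrace{ca_2, \dots, ca_2}_{n}) = c^{m + n - 1}\kappa_{m, n}(a_1, a_2).$$
Since $a_1, a_2$ are commuting self-adjoint elements, Lemma \ref{FreeBifreeK} shows that the $\kappa_{m, n}(a_1, a_2)$ are real and coincide with the bi-free cumulants of the pair $(a_1, a_2)$, whose joint distribution $\nu$ is a compactly supported planar probability measure. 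Hence each $R_{m, n}$ is real, and from the boundedness of $\mathrm{supp}(\nu)$ together with the estimate $\sharp\mathrm{NC}(k) \leq 4^k$ in the cumulant--moment inversion one obtains a bound $|R_{m, n}| \leq c^{-1}L_0^{m + n}$ for a suitable $L_0$, so that $R = \{R_{m, n}\}$ has at most geometric growth.

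Next I would recast the conclusion as a moment problem. Let $\{M_{m, n}\}$ be the $2$-sequence produced from $R$ by the (commuting) moment--cumulant formula, $M_{m, n} = \sum_{\pi \in \mathrm{NC}(m + n)}\prod_{V \in \pi}R_{m_V, n_V}$, where $m_V, n_V$ count the left and right legs of the block $V$. Because the moment--cumulant relation is an honest identity in the $C^*$-algebra $p\mathcal{A}p$, these are exactly the ordered moments $M_{m, n} = \varphi^{p\mathcal{A}p}((pa_1p)^m(pa_2p)^n)$; in particular $|M_{m, n}| \leq \|pa_1p\|^m\|pa_2p\|^n \leq K^{m + n}$, so $M$ satisfies the boundedness requirement of condition $(2)$ of Theorem \ref{moment1} with $L = \max(\|pa_1p\|, \|pa_2p\|)$. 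Proving the lemma therefore reduces to showing that $M$ is a compactly supported moment $2$-sequence: its representing measure $\mu$ is then a compactly supported planar probability measure whose bi-free cumulants are, by Lemma \ref{FreeBifreeK} and the moment--cumulant formula, precisely the prescribed $R_{m, n}$.

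The hard part is the positivity condition $(1)$ of Theorem \ref{moment1}, namely $[q, q]_M \geq 0$ for all $q \in \mathbb{C}[s, t]$. The obstacle is genuine: the natural operator model $(pa_1p, pa_2p)$ does \emph{not} commute, so $[q, q]_M = \sum_{j, k}c_j\overline{c}_k\,\varphi^{p\mathcal{A}p}((pa_1p)^{m_j + m_k}(pa_2p)^{n_j + n_k})$ is an \emph{ordered} expression and is not manifestly of the nonnegative form $\varphi^{p\mathcal{A}p}(QQ^*)$. Establishing positivity is thus exactly the assertion that these compressed free cumulants are realizable by a genuinely commuting pair, and this is where the real work lies; I expect it to be the main difficulty of the proof. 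My line of attack would be to exploit that $c \leq 1$ (equivalently that the implicit convolution exponent $1/c$ is $\geq 1$): for each real direction one applies the one-variable free compression result to the bounded self-adjoint element $p(\cos\theta\,a_1 + \sin\theta\,a_2)p \in p\mathcal{A}p$, which automatically carries a compactly supported spectral distribution, and whose free cumulants are $c^{\,k - 1}$ times the directional cumulants of the commuting variable $\cos\theta\,a_1 + \sin\theta\,a_2$; these agree with the directional cumulants the hypothetical pair must have. This shows every one-dimensional marginal of $M$ is a genuine compactly supported measure, which together with the determinacy criterion in Remark \ref{moment2}$(2)$ pins down the candidate.

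Upgrading this marginal positivity to the full two-dimensional positivity $[q, q]_M \geq 0$ is the crux, since directional information controls only powers of linear forms and not arbitrary squares. I would close the gap in one of two ways: either by feeding the explicit expansion $M_{m, n} = \sum_{\pi \in \mathrm{NC}(m + n)}c^{\,(m + n) - |\pi|}\prod_{V \in \pi}\kappa_{m_V, n_V}^\nu$ into the Devinatz machinery of Theorem \ref{moment1}, using $c \leq 1$ to preserve positivity, or by exhibiting $M$ as the limit of moment sequences of genuine planar probability measures with the uniform bound $|M_{m, n}| \leq L^{m + n}$ and invoking Lemma \ref{moment3}. Once conditions $(1)$ and $(2)$ are verified, Theorem \ref{moment1} produces the desired compactly supported $\mu$, and the identification of its bi-free cumulants with $R_{m, n}$ completes the proof.
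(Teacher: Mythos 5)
Your setup is on the right track, and you have correctly isolated the crux: positivity of the form $[q,q]_M$, where $M_{m,n}=\varphi^{p\mathcal{A}p}\bigl((pa_1p)^m(pa_2p)^n\bigr)$, is not automatic because $pa_1p$ and $pa_2p$ do not commute. But you never actually close this gap; both of your proposed routes (directional marginals, or ``using $c\le 1$ to preserve positivity'' inside the Devinatz machinery, or invoking Lemma \ref{moment3}) are left as sketches, and none of them obviously works. Marginal positivity along lines controls only linear forms, as you admit; Lemma \ref{moment3} presupposes that $M$ is already a limit of genuine planar moment sequences, which is essentially what is to be proved; and the inequality $c\le 1$ plays no special role in the positivity. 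Note also that condition $(2)$ of Theorem \ref{moment1} is the \emph{relative} bound $|[sq,q]_M|\le L\,[q,q]_M$, which does not follow from the absolute estimate $|M_{m,n}|\le K^{m+n}$ alone.

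The missing idea, which is the entire content of the paper's proof, is that the compressed mixed moments are \emph{permutation invariant}: by \cite[Theorem 14.10]{NS06} and the moment--cumulant formula, for $x_j=pa_jp$ one has
$$\varphi^{p\mathcal{A}p}\bigl(x_{i_{\sigma(1)}}\cdots x_{i_{\sigma(n)}}\bigr)=\varphi(p)^{n-1}\varphi\bigl(a_{i_{\sigma(1)}}\cdots a_{i_{\sigma(n)}}\bigr)=\varphi(p)^{n-1}\varphi\bigl(a_{i_1}\cdots a_{i_n}\bigr)=\varphi^{p\mathcal{A}p}\bigl(x_{i_1}\cdots x_{i_n}\bigr),$$
since $a_1$ and $a_2$ commute. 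Although $x_1$ and $x_2$ do not commute as operators, their joint distribution under $\varphi^{p\mathcal{A}p}$ behaves as if they did. This immediately converts the ordered expression into a manifestly nonnegative one,
$$[q,q]_M=\sum_{j,k}c_j\overline{c}_k\,\varphi^{p\mathcal{A}p}\bigl(x_1^{m_j+m_k}x_2^{n_j+n_k}\bigr)=\varphi^{p\mathcal{A}p}(QQ^*)\ge 0,\qquad Q=\sum_j c_jx_1^{m_j}x_2^{n_j},$$
and the same invariance gives the relative bound $|[sq,q]_M|\le L\,[q,q]_M$ with $L=\max\{\|a_1\|,\|a_2\|\}$ as well as the identification of the bi-free cumulants of the representing measure $\mu$ with the compressed free cumulants. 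Without this observation your argument does not go through; with it, the lemma follows in a few lines from Theorem \ref{moment1}, for any projection $p$ free from $\{a_1,a_2\}$ with $\varphi(p)\neq 0$.
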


\begin{proof}
Let $i_1, \dots, i_n \in \{1, 2\}$ and let $\sigma \in S_n$ be a permutation. Denote $pa_jp$ by $x_j$ for $j = 1, 2$, then by the moment-cumulant formula and \cite[Theorem 14.10]{NS06}, we have
\begin{align*}
\varphi^{p\mathcal{A}p}\left(x_{i_{\sigma(1)}}\cdots x_{i_{\sigma(n)}}\right) &= \sum_{\pi \in \mathrm{NC}(n)}\kappa_\pi^{p\mathcal{A}p}\left(x_{i_{\sigma(1)}}, \dots, x_{i_{\sigma(n)}}\right)\\
&= \varphi(p)^{-1}\sum_{\pi \in \mathrm{NC}(n)}\kappa_\pi\left(\varphi(p)a_{i_{\sigma(1)}}, \dots, \varphi(p)a_{i_{\sigma(n)}}\right)\\
&= \varphi(p)^{n-1}\varphi\left(a_{i_{\sigma(1)}}\cdots a_{i_{\sigma(n)}}\right)\\
&= \varphi(p)^{n-1}\varphi\left(a_{i_1}\cdots a_{i_n}\right)\\
&= \varphi(p)^{-1}\sum_{\pi \in \mathrm{NC}(n)}\kappa_\pi\left(\varphi(p)a_{i_1}, \dots, \varphi(p)a_{i_n}\right)\\
&= \sum_{\pi \in \mathrm{NC}(n)}\kappa_\pi^{p\mathcal{A}p}\left(x_{i_1}, \dots, x_{i_n}\right)\\
&= \varphi^{p\mathcal{A}p}\left(x_{i_1}\cdots x_{i_n}\right),
\end{align*}
where the second equality follows from \eqref{KappaAp}. Consider now the $2$-sequence $M = \{M_{m, n}\}_{(m, n) \in \mathbb{Z}_+^2}$ defined by $M_{m, n} = \varphi^{p\mathcal{A}p}(x_1^mx_2^n)$ for $(m, n) \in \mathbb{Z}_+^2 \backslash \{(0, 0)\}$ and $M_{0, 0}=1$. It is
easy to verify that the sesquilinear form $[\cdot,\cdot]$ induced by $M$ satisfies the conditions (1) and (2) in Theorem \ref{moment1}. Indeed, observe that for any $q = \sum_{j = 1}^\ell c_js^{m_j}t^{n_j} \in \mathbb{C}[s, t]$, the identity shown above yields
\begin{align*}
[q,q]_M&=\sum_{j,k=1}^\ell c_j\overline{c}_kM_{m_j+m_k,n_j+n_k} \\
&=\sum_{j,k=1}^\ell c_j\overline{c}_k\varphi^{p\mathcal{A}p}(x_1^{m_j+m_k}x_2^{n_j+n_k}) \\
&=\sum_{j,k=1}^\ell c_j\overline{c}_k\varphi^{p\mathcal{A}p}(x_1^{m_j}x_2^{n_j}x_1^{m_k}x_2^{n_k}) \\
&=\varphi^{p\mathcal{A}p}\left(\left(\sum_{j=1}^\ell c_jx_1^{m_j}x_2^{n_j}\right)\left(\sum_{j=1}^\ell c_jx_1^{m_j}x_2^{n_j}\right)^*\right) \geq 0.
\end{align*}
Similarly, one can see that $|[sq, q]_M| \leq L\cdot[q, q]_M$ and $|[q, tq]_M| \leq L\cdot[q, q]_M$, where $L = \max\{\|a_1\|, \|a_2\|\}$. Thus $M$ is a moment sequence whose representing measure $\mu$ is a compactly supported planar probability measure. By the moment-cumulant formula and the identity $\varphi^{p\mathcal{A}p}\left(x_{i_{\sigma(1)}}\cdots x_{i_{\sigma(n)}}\right)=\varphi^{p\mathcal{A}p}\left(x_{i_1}\cdots x_{i_n}\right)$ again, one can see that the desired identity in the lemma holds.
\end{proof}

\begin{thm}
Let $\mu$ be a compactly supported probability measure on $\mathbb{R}^2$, then there exists an additive convolution semigroup $(\mu_t)_{t \geq 1}$ of compactly supported probability measures on $\mathbb{R}^2$ such that
$$\mu_1 = \mu\;\;\;\;\;\text{and}\;\;\;\;\;\mu_s \boxplus\boxplus \mu_t = \mu_{s + t}$$
for all $s, t \geq 1$, and the mapping $t \mapsto \mu_t$ is continuous with respect to the weak$^*$ topology on planar
probability measures.
\end{thm}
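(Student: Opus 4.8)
The plan is to adapt the free compression method of Nica and Speicher, using Lemma \ref{semigrouplem} as the bridge between compressions and planar probability measures. First I would realize the given measure $\mu$ as the joint distribution of a two-faced pair $(a_1, a_2)$ of commuting self-adjoint random variables in some $C^*$-probability space $(\mathcal{A}, \varphi)$. For each fixed $t \geq 1$, since $1/t \in (0, 1]$, I would enlarge $(\mathcal{A}, \varphi)$ by a free product so as to obtain a projection $p$ that is free from $\{a_1, a_2\}$ and satisfies $\varphi(p) = 1/t$; such a $p$ is produced, for instance, by forming the free product of $(\mathcal{A}, \varphi)$ with $(\mathbb{C}^2, \psi)$ where $\psi$ assigns mass $1/t$ to a minimal projection. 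The restriction $t \geq 1$ is forced at exactly this point, since a projection satisfies $\varphi(p) \leq 1$; this is the bi-free analogue of the peculiarity of the free case mentioned in the introduction.

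Next I would apply Lemma \ref{semigrouplem} to $a_1, a_2, p$ to obtain a compactly supported planar probability measure $\tilde{\mu}_t$ whose bi-free cumulants equal $\kappa_{m + n}^{p\mathcal{A}p}(pa_1p, \dots, pa_2p)$. Combining the compression formula \eqref{KappaAp} with Lemma \ref{FreeBifreeK}, these cumulants simplify to $\varphi(p)^{m + n - 1}\kappa_{m, n}^\mu = t^{-(m + n - 1)}\kappa_{m, n}^\mu$. I would then define $\mu_t$ to be the dilation of $\tilde{\mu}_t$ by the factor $t$, that is, the push-forward of $\tilde{\mu}_t$ under the map $(s, u) \mapsto (ts, tu)$. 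Since dilation by $t$ scales the cumulant of index $(m, n)$ by $t^{m + n}$, this yields $\kappa_{m, n}^{\mu_t} = t\kappa_{m, n}^\mu$ for all $(m, n)$, equivalently $\mathcal{R}_{\mu_t} = t\mathcal{R}_\mu$. In particular $\mu_1 = \mu$, because at $t = 1$ the cumulants agree with those of $\mu$ and, by Theorem \ref{moment1}, a compactly supported planar measure is determined by its cumulants.

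The semigroup property would then follow formally. For $s, t \geq 1$ the measure $\mu_s \boxplus\boxplus \mu_t$ is again a compactly supported planar probability measure (being the joint distribution of a sum of bi-free commuting pairs), and the additivity of the bi-free $\mathcal{R}$-transform gives $\mathcal{R}_{\mu_s \boxplus\boxplus \mu_t} = s\mathcal{R}_\mu + t\mathcal{R}_\mu = (s + t)\mathcal{R}_\mu = \mathcal{R}_{\mu_{s + t}}$; equality of all bi-free cumulants together with the uniqueness in Theorem \ref{moment1} then gives $\mu_s \boxplus\boxplus \mu_t = \mu_{s + t}$.

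For the weak$^*$ continuity I would first secure a uniform support bound: with $L = \max\{\|a_1\|, \|a_2\|\}$, the operators $pa_1p$ and $pa_2p$ have norm at most $L$, so $\tilde{\mu}_t$ is supported in $[-L, L]^2$ and hence $\mu_t$ is supported in $[-tL, tL]^2 \subseteq [-TL, TL]^2$ for all $t$ in a compact interval $[1, T]$. On such a fixed compact set, weak$^*$ convergence is equivalent to convergence of all mixed moments, and each mixed moment of $\mu_t$ is a universal polynomial in the cumulants $t\kappa_{m, n}^\mu$, hence a continuous (indeed polynomial) function of $t$. Therefore $t \mapsto \mu_t$ is weak$^*$ continuous on every $[1, T]$, and so on all of $[1, \infty)$. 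I expect the essential difficulty, which Lemma \ref{semigrouplem} is precisely designed to overcome, to be that the compressed operators $pa_1p$ and $pa_2p$ need not commute, so the construction does not a priori produce a planar measure; the lemma resolves this by showing that their mixed moments are symmetric in the order of the factors and satisfy the two-dimensional moment conditions of Theorem \ref{moment1}.
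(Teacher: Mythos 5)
Your proposal is correct and follows essentially the same route as the paper: realize $\mu$ by a commuting two-faced pair, adjoin a free projection $p$ with $\varphi(p)=1/t$, and invoke Lemma \ref{semigrouplem} together with the compression formula \eqref{KappaAp} to produce $\mu_t$ with $\kappa_{m,n}^{\mu_t}=t\kappa_{m,n}^\mu$. The only (immaterial) difference is bookkeeping: the paper compresses the scaled pair $(p(ta)p, p(tb)p)$ directly, whereas you compress $(pa_1p, pa_2p)$ and then dilate the resulting measure by $t$, which amounts to the same cumulant computation.
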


\begin{proof}
Let $(a, b)$ be a two-faced pair in a $C^*$-probability space $(\mathcal{A}, \varphi)$ such that $a = a^*$, $b = b^*$, $[a, b] = 0$, and the distribution of $(a, b)$ is $\mu$. For any $t \geq 1$, let $p \in \mathcal{A}$ be a projection such that $\varphi(p) = 1/t$ and $p$ is free from $\{a, b\}$. Note that for given $\mu$, one can always realize such $(a, b)$ and $p$. For example, consider $a$ and $b$ to be the multiplication operators on the Hilbert space $L^2(\mathbb{R}^2, d\mu)$ with respect to the first and second coordinates of $\mathbb{R}^2$, respectively, and consider $p$ to be the multiplication operator by $\chi_{[0, 1/t]}$ on $L^2([0, 1], dx)$. Then $(a, b)$ and $p$ satisfy the required properties in the reduced free product of $\mathcal{L}^\infty(\mathbb{R}^2)$ and $\mathcal{L}^\infty([0, 1])$, the $C^*$-subalgebras of $B(L^2(\mathbb{R}^2, d\mu))$ and $B(L^2([0, 1], dx))$ consisting of multiplication operators induced by $L^\infty$ functions on $\mathbb{R}^2$ and $[0, 1]$, respectively.

Consider the two-faced pair $(p(ta)p, p(tb)p)$ in the compressed space $(p\mathcal{A}p, \varphi^{p\mathcal{A}p})$. By Lemma \ref{semigrouplem}, there exists a compactly supported probability measure $\mu_t$ on $\mathbb{R}^2$ such that
\begin{align*}
\kappa_{m, n}^{\mu_t} &= \kappa_{m + n}^{p\mathcal{A}p}(\underbrace{p(ta)p, \dots, p(ta)p}_{m\;\mathrm{times}}, \underbrace{p(tb)p, \dots, p(tb)p}_{n\;\mathrm{times}})\\
&= t\kappa_{m + n}(\underbrace{a, \dots, a}_{m\;\mathrm{times}}, \underbrace{b, \dots, b}_{n\;\mathrm{times}})\\
&= t\kappa_{m, n}(a, b) = t\kappa_{m, n}^\mu
\end{align*}
for all $m, n \geq 0$ with $m + n \geq 1$, where the second equality follows from \eqref{KappaAp}. This shows that
$$\kappa_{m, n}^{\mu_{s + t}} = (s + t)\kappa_{m, n}^\mu = \kappa_{m, n}^{\mu_s} + \kappa_{m, n}^{\mu_t}$$
for all $s, t \geq 1$, thus $\mu_{s + t} = \mu_s \boxplus\boxplus \mu_t$. Moreover, it is clear that
$\mu_1 = \mu$, and the mapping $t \mapsto t\kappa_{m, n}^\mu$ is continuous, hence all moments and cumulants of $\mu_t$ are continuous in $t$.
\end{proof}

\section*{Acknowledgements}

It is our pleasure to thank the referee for carefully reviewing the paper and providing many valuable corrections and suggestions.

\end{document}